\def\sameau{\rule[0.017in]{0.2in}{0.012in}}
\numberwithin{equation}{section}
\numberwithin{figure}{section}
\theoremstyle{plain}
\newtheorem{thm}{\protect\theoremname}
  \theoremstyle{definition}
  \newtheorem{defn}[thm]{\protect\definitionname}
  \theoremstyle{plain}
  \theoremstyle{plain}
  \newtheorem{lem}[thm]{\protect\lemmaname}
  \theoremstyle{remark}
  \newtheorem{rem}[thm]{\protect\remarkname}
  \theoremstyle{definition}
  \newtheorem{example}[thm]{\protect\examplename}
\def\id{{\bf id}}
  \providecommand{\definitionname}{Definition}
  \providecommand{\examplename}{Example}
  \providecommand{\lemmaname}{Lemma}
  \providecommand{\propositionname}{Proposition}
  \providecommand{\remarkname}{Remark}
\providecommand{\theoremname}{Theorem}
\def\et{
 \begin{tikzpicture}
 \draw[thick] (0cm,0cm) circle(0.4cm);
                \filldraw[black] (90:0.4cm) (0.08,.48)rectangle(-.08,.34);
                		\draw (75:0.60cm) node[above=-9pt]{$\hspace{-0.2cm}\phantom{l}^{i}$};
 \end{tikzpicture}}
\def\etn{
 \begin{tikzpicture}
 \draw[thick] (0cm,0cm) circle(0.4cm);
                \filldraw[black] (90:0.4cm) (0.08,.48)rectangle(-.08,.34);
                		\draw (75:0.60cm) node[above=-9pt]{$\hspace{-0.0cm}\phantom{l}^{n_1}$};
 \end{tikzpicture}}
\def\etk{
 \begin{tikzpicture}
 \draw[thick] (0cm,0cm) circle(0.4cm);
                \filldraw[black] (90:0.4cm) (0.08,.48)rectangle(-.08,.34);
                		\draw (75:0.60cm) node[above=-9pt]{$\hspace{-0.0cm}\phantom{l}^{n_2}$};
 \end{tikzpicture}}
\def\bt{
 \begin{tikzpicture}
 \draw[thick] (0cm,0cm) circle(0.4cm);
                \filldraw[black] (90:0.4cm) (0.08,.48)rectangle(-.08,.34);
                                \draw (75:0.60cm) node[above=-9pt]{$\hspace{-0.2cm}\phantom{l}^{i}$};
	 \filldraw[black] (-90:0.4cm) circle(1.7pt);
 \end{tikzpicture}}
\def\btn{
 \begin{tikzpicture}
 \draw[thick] (0cm,0cm) circle(0.4cm);
                \filldraw[black] (90:0.4cm) (0.08,.48)rectangle(-.08,.34);
                                \draw (75:0.60cm) node[above=-9pt]{$\hspace{-0.2cm}\phantom{l}^{i}$};
	 \filldraw[black] (-90:0.4cm) circle(1.7pt);
                               \draw (72:0.6cm) node[above=-42pt]{$\hspace{-0.2cm}\phantom{l}^{n_1}$};
 \end{tikzpicture}}
\def\bktn{
 \begin{tikzpicture}
 \draw[thick] (0cm,0cm) circle(0.4cm);
                \filldraw[black] (90:0.4cm) (0.08,.48)rectangle(-.08,.34);
                                \draw (75:0.60cm) node[above=-9pt]{$\hspace{-0.0cm}\phantom{l}^{n_1}$};
	 \filldraw[black] (-90:0.4cm) circle(1.7pt);
                                \draw (75:0.6cm) node[above=-42pt]{$\hspace{-0.2cm}\phantom{l}^{n_2}$};
 \end{tikzpicture}}
\def\nbt{
 \begin{tikzpicture}
 \draw[thick] (0cm,0cm) circle(0.4cm);
                \filldraw[black] (90:0.4cm) (0.08,.48)rectangle(-.08,.34);
                                \draw (75:0.60cm) node[above=-9pt]{$\hspace{-0.0cm}\phantom{l}^{n_1}$};
	 \filldraw[black] (-90:0.4cm) circle(1.7pt);
\end{tikzpicture}}
\def\bbt{
 \begin{tikzpicture}
 \draw[thick] (0cm,0cm) circle(0.4cm);
                \filldraw[black] (90:0.4cm) (0.08,.48)rectangle(-.08,.34);
	                                \draw (75:0.60cm) node[above=-9pt]{$\hspace{-0.2cm}\phantom{l}^{i}$};
	  \filldraw[black] (-45:0.4cm) circle(1.7pt);
	  \filldraw[black] (-135:0.4cm) circle(1.7pt);
 \end{tikzpicture}}
\def\bnbt{
 \begin{tikzpicture}
 \draw[thick] (0cm,0cm) circle(0.4cm);
                \filldraw[black] (90:0.4cm) (0.08,.48)rectangle(-.08,.34);
	                                \draw (75:0.60cm) node[above=-9pt]{$\hspace{-0.2cm}\phantom{l}^{i}$};
	  \filldraw[black] (-45:0.4cm) circle(1.7pt);
	   		\draw (48:0.67cm) node[above=-36pt]{$\hspace{-0.1cm}\phantom{l}^{n_1}$};
	  \filldraw[black] (-135:0.4cm) circle(1.7pt);
 \end{tikzpicture}}
\def\bbnt{
 \begin{tikzpicture}
 \draw[thick] (0cm,0cm) circle(0.4cm);
                \filldraw[black] (90:0.4cm) (0.08,.48)rectangle(-.08,.34);
	                                \draw (75:0.60cm) node[above=-9pt]{$\hspace{-0.2cm}\phantom{l}^{i}$};
	  \filldraw[black] (-45:0.4cm) circle(1.7pt);
	  \filldraw[black] (-135:0.4cm) circle(1.7pt);
	  	   		\draw (-155:0.43cm) node[above=-16pt]{$\hspace{-0.2cm}\phantom{l}^{n_1}$};
 \end{tikzpicture}}
\def\bkbnt{
 \begin{tikzpicture}
 \draw[thick] (0cm,0cm) circle(0.4cm);
                \filldraw[black] (90:0.4cm) (0.08,.48)rectangle(-.08,.34);
	                                \draw (75:0.60cm) node[above=-9pt]{$\hspace{-0.2cm}\phantom{l}^{i}$};
	  \filldraw[black] (-45:0.4cm) circle(1.7pt);
	  			\draw (50:0.65cm) node[above=-36pt]{$\hspace{-0.1cm}\phantom{l}^{n_1}$};
	  \filldraw[black] (-135:0.4cm) circle(1.7pt);
	  	   		\draw (-152:0.4cm) node[above=-16pt]{$\hspace{-0.2cm}\phantom{l}^{n_2}$};
 \end{tikzpicture}}
\def\bbbt{
 \begin{tikzpicture}
 \draw[thick] (0cm,0cm) circle(0.4cm);
                \filldraw[black] (90:0.4cm) (0.08,.48)rectangle(-.08,.34);
                                \draw (75:0.60cm) node[above=-9pt]{$\hspace{-0.2cm}\phantom{l}^{i}$};
	 \filldraw[black] (0:0.4cm) circle(1.7pt);
	 \filldraw[black] (-90:0.4cm) circle(1.7pt);
	 \filldraw[black] (-180:0.4cm) circle(1.7pt);
 \end{tikzpicture}}
\def\bbbbt{
 \begin{tikzpicture}
 \draw[thick] (0cm,0cm) circle(0.4cm);
                \filldraw[black] (90:0.4cm) (0.08,.48)rectangle(-.08,.34);
                                \draw (75:0.60cm) node[above=-9pt]{$\hspace{-0.2cm}\phantom{l}^{i}$};
	 \filldraw[black] (20:0.4cm) circle(1.7pt);
	 \filldraw[black] (-50:0.4cm) circle(1.7pt);
	 \filldraw[black] (-130:0.4cm) circle(1.7pt);
	 \filldraw[black] (-200:0.4cm) circle(1.7pt);
 \end{tikzpicture}}
\def\bbnbt{
 \begin{tikzpicture}
 \draw[thick] (0cm,0cm) circle(0.4cm);
                \filldraw[black] (90:0.4cm) (0.08,.48)rectangle(-.08,.34);
                                \draw (75:0.60cm) node[above=-9pt]{$\hspace{-0.2cm}\phantom{l}^{i}$};
	 \filldraw[black] (0:0.4cm) circle(1.7pt);
	 \filldraw[black] (-90:0.4cm) circle(1.7pt);
	 	\draw (-90:0.35cm) node[above=-16pt]{$\hspace{0.2cm}\phantom{l}^{n_1}$};
	 \filldraw[black] (-180:0.4cm) circle(1.7pt);
 \end{tikzpicture}}
\def\bbnbkt{
 \begin{tikzpicture}
 \draw[thick] (0cm,0cm) circle(0.4cm);
                \filldraw[black] (90:0.4cm) (0.08,.48)rectangle(-.08,.34);
                                \draw (75:0.60cm) node[above=-9pt]{$\hspace{-0.2cm}\phantom{l}^{i}$};
	 \filldraw[black] (0:0.4cm) circle(1.7pt);
	 \filldraw[black] (-90:0.4cm) circle(1.7pt);
	 	\draw (-90:0.35cm) node[above=-16pt]{$\hspace{0.2cm}\phantom{l}^{n_1}$};
	 \filldraw[black] (-180:0.4cm) circle(1.7pt);
	 	\draw (-210:0.8cm) node[above=-16pt]{$\hspace{0.0cm}\phantom{l}^{n_2}$};
 \end{tikzpicture}}
\def\bbkbbnt{
 \begin{tikzpicture}
 \draw[thick] (0cm,0cm) circle(0.4cm);
                	\filldraw[black] (90:0.4cm) (0.08,.48)rectangle(-.08,.34);
                			\draw (75:0.60cm) node[above=-9pt]{$\hspace{-0.2cm}\phantom{l}^{i}$};
	\filldraw[black] (25:0.4cm) circle(1.7pt);
	\filldraw[black] (-50:0.4cm) circle(1.7pt);
		\draw (-20:0.5cm) node[above=-16pt]{$\hspace{0.0cm}\phantom{l}^{n_1}$};
	\filldraw[black] (-130:0.4cm) circle(1.7pt);
	\filldraw[black] (-200:0.4cm) circle(1.7pt);
		\draw (-210:0.9cm) node[above=-16pt]{$\hspace{0.1cm}\phantom{l}^{n_2}$};
 \end{tikzpicture}}
\def\rt{
 \begin{tikzpicture}
 \draw[thick] (0cm,0cm) circle(0.4cm);
                	\filldraw[black] (90:0.4cm) (0.08,.48)rectangle(-.08,.34);
                			\draw (75:0.60cm) node[above=-9pt]{$\hspace{-0.2cm}\phantom{l}^{i}$};
	\filldraw[fill=white, draw=black,thick] (-90:0.4cm) circle(1.7pt);
 \end{tikzpicture}}
\def\rtn{
 \begin{tikzpicture}
 \draw[thick] (0cm,0cm) circle(0.4cm);
                \filldraw[black] (90:0.4cm) (0.08,.48)rectangle(-.08,.34);
                                \draw (75:0.60cm) node[above=-9pt]{$\hspace{-0.0cm}\phantom{l}^{n_1}$};
	 \filldraw[fill=white, draw=black,thick] (-90:0.4cm) circle(1.7pt);
 \end{tikzpicture}}
\def\rrt{
 \begin{tikzpicture}
 \draw[thick] (0cm,0cm) circle(0.4cm);
                \filldraw[black] (90:0.4cm) (0.08,.48)rectangle(-.08,.34);
	                                \draw (75:0.60cm) node[above=-9pt]{$\hspace{-0.2cm}\phantom{l}^{i}$};
	  \filldraw[fill=white, draw=black,thick] (-45:0.4cm) circle(1.7pt);
	  \filldraw[fill=white, draw=black,thick] (-135:0.4cm) circle(1.7pt);
 \end{tikzpicture}}
\def\rrrt{
 \begin{tikzpicture}
 \draw[thick] (0cm,0cm) circle(0.4cm);
                	\filldraw[black] (90:0.4cm) (0.08,.48)rectangle(-.08,.34);
                			\draw (75:0.60cm) node[above=-9pt]{$\hspace{-0.2cm}\phantom{l}^{i}$};
	\filldraw[fill=white, draw=black,thick] (0:0.4cm) circle(1.7pt);
	\filldraw[fill=white, draw=black,thick] (-90:0.4cm) circle(1.7pt);
	\filldraw[fill=white, draw=black,thick] (-180:0.4cm) circle(1.7pt);
 \end{tikzpicture}}
\def\fourrt{
 \begin{tikzpicture}
 \draw[thick] (0cm,0cm) circle(0.4cm);
                \filldraw[black] (90:0.4cm) (0.08,.48)rectangle(-.08,.34);
                                \draw (75:0.60cm) node[above=-9pt]{$\hspace{-0.2cm}\phantom{l}^{i}$};
	 \filldraw[fill=white, draw=black,thick] (18:0.4cm) circle(1.7pt);
	 \filldraw[fill=white, draw=black,thick] (-54:0.4cm) circle(1.7pt);
	 \filldraw[fill=white, draw=black,thick] (-126:0.4cm) circle(1.7pt);
	 \filldraw[fill=white, draw=black,thick] (-198:0.4cm) circle(1.7pt);
 \end{tikzpicture}}
 \def\fivert{
 \begin{tikzpicture}
 \draw[thick] (0cm,0cm) circle(0.4cm);
                \filldraw[black] (90:0.4cm) (0.08,.48)rectangle(-.08,.34);
                                \draw (75:0.60cm) node[above=-9pt]{$\hspace{-0.2cm}\phantom{l}^{i}$};
	 \filldraw[fill=white, draw=black,thick] (30:0.4cm) circle(1.7pt);
	 \filldraw[fill=white, draw=black,thick] (-30:0.4cm) circle(1.7pt);
	 \filldraw[fill=white, draw=black,thick] (-90:0.4cm) circle(1.7pt);
	 \filldraw[fill=white, draw=black,thick] (-150:0.4cm) circle(1.7pt);
	 \filldraw[fill=white, draw=black,thick] (-210:0.4cm) circle(1.7pt);
 \end{tikzpicture}}
 \def\sixrt{
 \begin{tikzpicture}
 \draw[thick] (0cm,0cm) circle(0.4cm);
                \filldraw[black] (90:0.4cm) (0.08,.48)rectangle(-.08,.34);
                                \draw (75:0.60cm) node[above=-9pt]{$\hspace{-0.2cm}\phantom{l}^{i}$};
	 \filldraw[fill=white, draw=black,thick] (38.6:0.4cm) circle(1.7pt);
	 \filldraw[fill=white, draw=black,thick] (-12.8:0.4cm) circle(1.7pt);
	 \filldraw[fill=white, draw=black,thick] (-64.3:0.4cm) circle(1.7pt);
	 \filldraw[fill=white, draw=black,thick] (-115.7:0.4cm) circle(1.7pt);
	 \filldraw[fill=white, draw=black,thick] (-167.1:0.4cm) circle(1.7pt);
	 \filldraw[fill=white, draw=black,thick] (-218.6:0.4cm) circle(1.7pt);
 \end{tikzpicture}}
 \def\sevenrt{
 \begin{tikzpicture}
 \draw[thick] (0cm,0cm) circle(0.4cm);
                \filldraw[black] (90:0.4cm) (0.08,.48)rectangle(-.08,.34);
                                \draw (75:0.60cm) node[above=-9pt]{$\hspace{-0.2cm}\phantom{l}^{i}$};
	 \filldraw[fill=white, draw=black,thick] (45:0.4cm) circle(1.7pt);
	 \filldraw[fill=white, draw=black,thick] (0:0.4cm) circle(1.7pt);
	 \filldraw[fill=white, draw=black,thick] (-45:0.4cm) circle(1.7pt);
	 \filldraw[fill=white, draw=black,thick] (-90:0.4cm) circle(1.7pt);
	 \filldraw[fill=white, draw=black,thick] (-135:0.4cm) circle(1.7pt);
	 \filldraw[fill=white, draw=black,thick] (-180:0.4cm) circle(1.7pt);
	 \filldraw[fill=white, draw=black,thick] (-225:0.4cm) circle(1.7pt);
 \end{tikzpicture}}
\def\brt{
 \begin{tikzpicture}
 \draw[thick] (0cm,0cm) circle(0.4cm);
                \filldraw[black] (90:0.4cm) (0.08,.48)rectangle(-.08,.34);
	                                \draw (75:0.60cm) node[above=-9pt]{$\hspace{-0.2cm}\phantom{l}^{i}$};
	  \filldraw[black] (-45:0.4cm) circle(1.7pt);
	  \filldraw[fill=white, draw=black,thick] (-135:0.4cm) circle(1.7pt);
 \end{tikzpicture}}
\def\brtn{
 \begin{tikzpicture}
 \draw[thick] (0cm,0cm) circle(0.4cm);
                \filldraw[black] (90:0.4cm) (0.08,.48)rectangle(-.08,.34);
	                               \draw (75:0.60cm) node[above=-9pt]{$\hspace{0.cm}\phantom{l}^{n_1}$};
	  \filldraw[black] (-45:0.4cm) circle(1.7pt);
	  \filldraw[fill=white, draw=black,thick] (-135:0.4cm) circle(1.7pt);
 \end{tikzpicture}}
\def\rbnt{
 \begin{tikzpicture}
 \draw[thick] (0cm,0cm) circle(0.4cm);
                \filldraw[black] (90:0.4cm) (0.08,.48)rectangle(-.08,.34);
	                                \draw (75:0.60cm) node[above=-9pt]{$\hspace{-0.2cm}\phantom{l}^{i}$};
	  \filldraw[fill=white, draw=black,thick] (-45:0.4cm) circle(1.7pt);
	  \filldraw[black] (-135:0.4cm) circle(1.7pt);
	  	   		\draw (-155:0.42cm) node[above=-16pt]{$\hspace{-0.2cm}\phantom{l}^{n_2}$};
 \end{tikzpicture}}
\def\bnrt{
 \begin{tikzpicture}
 \draw[thick] (0cm,0cm) circle(0.4cm);
                \filldraw[black] (90:0.4cm) (0.08,.48)rectangle(-.08,.34);
	                                \draw (75:0.60cm) node[above=-9pt]{$\hspace{-0.2cm}\phantom{l}^{i}$};
	  \filldraw[black] (-45:0.4cm) circle(1.7pt);
	  	  	   	\draw (-20:0.55cm) node[above=-16pt]{$\hspace{-0.2cm}\phantom{l}^{n_1}$};
	  \filldraw[fill=white, draw=black,thick] (-135:0.4cm) circle(1.7pt);
 \end{tikzpicture}}
\def\rbt{
 \begin{tikzpicture}
 \draw[thick] (0cm,0cm) circle(0.4cm);
                \filldraw[black] (90:0.4cm) (0.08,.48)rectangle(-.08,.34);
	                                \draw (75:0.60cm) node[above=-9pt]{$\hspace{-0.2cm}\phantom{l}^{i}$};
	  \filldraw[fill=white, draw=black,thick] (-45:0.4cm) circle(1.7pt);
	  \filldraw[black] (-135:0.4cm) circle(1.7pt);
 \end{tikzpicture}}
\def\brbt{
 \begin{tikzpicture}
 \draw[thick] (0cm,0cm) circle(0.4cm);
                	\filldraw[black] (90:0.4cm) (0.08,.48)rectangle(-.08,.34);
                			\draw (75:0.60cm) node[above=-9pt]{$\hspace{-0.2cm}\phantom{l}^{i}$};
	\filldraw[black] (0:0.4cm) circle(1.7pt);
	\filldraw[fill=white, draw=black,thick] (-90:0.4cm) circle(1.7pt);
	\filldraw[black] (-180:0.4cm) circle(1.7pt);
 \end{tikzpicture}}
\def\rbbt{
 \begin{tikzpicture}
 \draw[thick] (0cm,0cm) circle(0.4cm);
                	\filldraw[black] (90:0.4cm) (0.08,.48)rectangle(-.08,.34);
                			\draw (75:0.60cm) node[above=-9pt]{$\hspace{-0.2cm}\phantom{l}^{i}$};
	\filldraw[fill=white, draw=black,thick] (0:0.4cm) circle(1.7pt);
	\filldraw[black] (-90:0.4cm) circle(1.7pt);
	\filldraw[black] (-180:0.4cm) circle(1.7pt);
 \end{tikzpicture}}
\def\bbrt{
 \begin{tikzpicture}
 \draw[thick] (0cm,0cm) circle(0.4cm);
                	\filldraw[black] (90:0.4cm) (0.08,.48)rectangle(-.08,.34);
                			\draw (75:0.60cm) node[above=-9pt]{$\hspace{-0.2cm}\phantom{l}^{i}$};
	\filldraw[black] (0:0.4cm) circle(1.7pt);
	\filldraw[black] (-90:0.4cm) circle(1.7pt);
	\filldraw[fill=white, draw=black,thick] (-180:0.4cm) circle(1.7pt);
 \end{tikzpicture}}
\def\bbnrt{
 \begin{tikzpicture}
 \draw[thick] (0cm,0cm) circle(0.4cm);
                	\filldraw[black] (90:0.4cm) (0.08,.48)rectangle(-.08,.34);
                			\draw (75:0.60cm) node[above=-9pt]{$\hspace{-0.2cm}\phantom{l}^{i}$};
	\filldraw[black] (0:0.4cm) circle(1.7pt);
	\filldraw[black] (-90:0.4cm) circle(1.7pt);
		\draw (-90:0.35cm) node[above=-16pt]{$\hspace{0.2cm}\phantom{l}^{n_1}$};
	\filldraw[fill=white, draw=black,thick] (-180:0.4cm) circle(1.7pt);
 \end{tikzpicture}}
\def\rrbt{
 \begin{tikzpicture}
 \draw[thick] (0cm,0cm) circle(0.4cm);
                	\filldraw[black] (90:0.4cm) (0.08,.48)rectangle(-.08,.34);
                			\draw (75:0.60cm) node[above=-9pt]{$\hspace{-0.2cm}\phantom{l}^{i}$};
	\filldraw[fill=white, draw=black,thick] (0:0.4cm) circle(1.7pt);
	\filldraw[fill=white, draw=black,thick] (-90:0.4cm) circle(1.7pt);
	\filldraw[black] (-180:0.4cm) circle(1.7pt);
 \end{tikzpicture}}
\def\rbbt{
 \begin{tikzpicture}
 \draw[thick] (0cm,0cm) circle(0.4cm);
                	\filldraw[black] (90:0.4cm) (0.08,.48)rectangle(-.08,.34);
                			\draw (75:0.60cm) node[above=-9pt]{$\hspace{-0.2cm}\phantom{l}^{i}$};
	\filldraw[fill=white, draw=black,thick] (0:0.4cm) circle(1.7pt);
	\filldraw[black] (-90:0.4cm) circle(1.7pt);
	\filldraw[black] (-180:0.4cm) circle(1.7pt);
 \end{tikzpicture}}
\def\bbrt{
 \begin{tikzpicture}
 \draw[thick] (0cm,0cm) circle(0.4cm);
                	\filldraw[black] (90:0.4cm) (0.08,.48)rectangle(-.08,.34);
                			\draw (75:0.60cm) node[above=-9pt]{$\hspace{-0.2cm}\phantom{l}^{i}$};
	\filldraw[black] (0:0.4cm) circle(1.7pt);
	\filldraw[black] (-90:0.4cm) circle(1.7pt);
	\filldraw[fill=white, draw=black,thick] (-180:0.4cm) circle(1.7pt);
 \end{tikzpicture}}
\def\brbrt{
 \begin{tikzpicture}
 \draw[thick] (0cm,0cm) circle(0.4cm);
                	\filldraw[black] (90:0.4cm) (0.08,.48)rectangle(-.08,.34);
                			\draw (75:0.60cm) node[above=-9pt]{$\hspace{-0.2cm}\phantom{l}^{i}$};
	\filldraw[black] (25:0.4cm) circle(1.7pt);
	\filldraw[fill=white, draw=black,thick] (-50:0.4cm) circle(1.7pt);
	\filldraw[black] (-130:0.4cm) circle(1.7pt);
	\filldraw[fill=white, draw=black,thick] (-200:0.4cm) circle(1.7pt);
 \end{tikzpicture}}
\def\bbnbrt{
 \begin{tikzpicture}
 \draw[thick] (0cm,0cm) circle(0.4cm);
                	\filldraw[black] (90:0.4cm) (0.08,.48)rectangle(-.08,.34);
                			\draw (75:0.60cm) node[above=-9pt]{$\hspace{-0.2cm}\phantom{l}^{i}$};
	\filldraw[black] (25:0.4cm) circle(1.7pt);
	\filldraw[black] (-50:0.4cm) circle(1.7pt);
		\draw (-20:0.55cm) node[above=-16pt]{$\hspace{-0.1cm}\phantom{l}^{n_1}$};
	\filldraw[black] (-130:0.4cm) circle(1.7pt);
	\filldraw[fill=white, draw=black,thick] (-200:0.4cm) circle(1.7pt);
 \end{tikzpicture}}
\def\brbbnt{
 \begin{tikzpicture}
 \draw[thick] (0cm,0cm) circle(0.4cm);
                	\filldraw[black] (90:0.4cm) (0.08,.48)rectangle(-.08,.34);
                			\draw (75:0.60cm) node[above=-9pt]{$\hspace{-0.2cm}\phantom{l}^{i}$};
	\filldraw[black] (25:0.4cm) circle(1.7pt);
	\filldraw[fill=white, draw=black,thick] (-50:0.4cm) circle(1.7pt);
	\filldraw[black] (-130:0.4cm) circle(1.7pt);
	\filldraw[black] (-200:0.4cm) circle(1.7pt);
		\draw (-215:0.85cm) node[above=-16pt]{$\hspace{0.05cm}\phantom{l}^{n_2}$};
 \end{tikzpicture}}
\def\rbrbt{
 \begin{tikzpicture}
 \draw[thick] (0cm,0cm) circle(0.4cm);
                	\filldraw[black] (90:0.4cm) (0.08,.48)rectangle(-.08,.34);
                			\draw (75:0.60cm) node[above=-9pt]{$\hspace{-0.2cm}\phantom{l}^{i}$};
	\filldraw[fill=white, draw=black,thick] (25:0.4cm) circle(1.7pt);
	\filldraw[black] (-50:0.4cm) circle(1.7pt);
	\filldraw[fill=white, draw=black,thick] (-130:0.4cm) circle(1.7pt);
	\filldraw[black] (-200:0.4cm) circle(1.7pt);
 \end{tikzpicture}}
 \def\arbrea{
 \begin{tikzpicture}
\draw[thick] (0,0) -- (0,-0.5);
                	\filldraw[green] (0,0) circle(2.5pt);
                	\filldraw[black] (0,-0.5) circle(2pt);
\end{tikzpicture}
}
 \def\arbreb{
 \begin{tikzpicture}
\draw[thick] (0,0) -- (0.5,-0.5);
\draw[thick] (0,0) -- (-0.5,-0.5);
                	\filldraw[black] (0.5,-0.5) circle(2pt);
                	\filldraw[black] (-0.5,-0.5) circle(2pt);
                	\filldraw[green] (0,0) circle(2.5pt);
\end{tikzpicture}
}
 \def\arbrec{
 \begin{tikzpicture}
\draw[thick] (0,0) -- (0,-0.5);
\draw[thick] (0,-0.5) -- (0,-1);
                	\filldraw[black] (0,-0.5) circle(2pt);
                	\filldraw[black] (0,-1) circle(2pt);
                	\filldraw[green] (0,0) circle(2.5pt);
\end{tikzpicture}
}
 \def\arbred{
 \begin{tikzpicture}
\draw[thick] (0,0) -- (0,-0.5);
\draw[thick] (0,-0.5) -- (0,-1);
\draw[thick] (0,-1) -- (0,-1.5);
                	\filldraw[black] (0,-0.5) circle(2pt);
                	\filldraw[black] (0,-1) circle(2pt);
                	\filldraw[black] (0,-1.5) circle(2pt);
                	\filldraw[green] (0,0) circle(2.5pt);
\end{tikzpicture}
}
 \def\arbree{
 \begin{tikzpicture}
\draw[thick] (0,0) -- (0.5,-0.5);
\draw[thick] (0,0) -- (-0.5,-0.5);
\draw[thick] (0,0) -- (0,-0.5);
                	\filldraw[black] (0,-0.5) circle(2pt);
                	\filldraw[black] (0.5,-0.5) circle(2pt);
                	\filldraw[black] (-0.5,-0.5) circle(2pt);
                	\filldraw[green] (0,0) circle(2.5pt);
\end{tikzpicture}
}
 \def\arbref{
 \begin{tikzpicture}
\draw[thick] (0,0) -- (0.5,-0.5);
\draw[thick] (0,0) -- (-0.5,-0.5);
\draw[thick] (-0.5,-0.5) -- (-0.5,-1);
                	\filldraw[black] (0.5,-0.5) circle(2pt);
                	\filldraw[black] (-0.5,-0.5) circle(2pt);
                	\filldraw[black] (-0.5,-1) circle(2pt);
                	\filldraw[green] (0,0) circle(2.5pt);
\end{tikzpicture}
}
 \def\arbreg{
 \begin{tikzpicture}
\draw[thick] (0,0) -- (0,-0.5);
\draw[thick] (0,-0.5) -- (0.5,-1);
\draw[thick] (0,-0.5) -- (-0.5,-1);
                	\filldraw[green] (0,0) circle(2.5pt);
                	\filldraw[black] (0,-0.5) circle(2pt);
                	\filldraw[black] (0.5,-1) circle(2pt);
                	\filldraw[black] (-0.5,-1) circle(2pt);
\end{tikzpicture}
}
 \def\arbreci{
 \begin{tikzpicture}
\draw[thick] (0,0) -- (0,-0.5);
\draw[thick] (0,-0.5) -- (0,-1);
\draw[thick] (0,-1) -- (0,-1.5);
                	\filldraw[black] (0,-0.5) circle(2pt);
                	\filldraw[black] (0,-1) circle(2pt);
                	\filldraw[black] (0,-1.5) circle(2pt);
                	\filldraw[green] (0,0) circle(2.5pt);
                			\draw (0.2,0.2) node[above=-9pt]{$i$};
                			\draw (0.2,-0.5) node[above=-9pt]{$1$};
                			\draw (0.2,-1) node[above=-9pt]{$2$};
                			\draw (0.2,-1.5) node[above=-9pt]{$3$};
\end{tikzpicture}
}
 \def\arbredi{
 \begin{tikzpicture}
\draw[thick] (0,0) -- (0.5,-0.5);
\draw[thick] (0,0) -- (-0.5,-0.5);
\draw[thick] (0,0) -- (0,-0.5);
                	\filldraw[black] (0,-0.5) circle(2pt);
                	\filldraw[black] (0.5,-0.5) circle(2pt);
                	\filldraw[black] (-0.5,-0.5) circle(2pt);
                	\filldraw[green] (0,0) circle(2.5pt);
                			\draw (0.2,0.2) node[above=-9pt]{$i$};
                			\draw (0,-0.7) node[above=-9pt]{$2$};
                			\draw (0.5,-0.7) node[above=-9pt]{$3$};
                			\draw (-0.5,-0.7) node[above=-9pt]{$1$};
\end{tikzpicture}
}
 \def\arbreei{
 \begin{tikzpicture}
\draw[thick] (0,0) -- (0.5,-0.5);
\draw[thick] (0,0) -- (-0.5,-0.5);
\draw[thick] (-0.5,-0.5) -- (-0.5,-1);
                	\filldraw[black] (0.5,-0.5) circle(2pt);
                	\filldraw[black] (-0.5,-0.5) circle(2pt);
                	\filldraw[black] (-0.5,-1) circle(2pt);
                	\filldraw[green] (0,0) circle(2.5pt);
                			\draw (0.2,0.2) node[above=-9pt]{$i$};
                			\draw (-0.7,-0.5) node[above=-9pt]{$1$};
                			\draw (0.5,-0.7) node[above=-9pt]{$2$};
                			\draw (-0.5,-1.2) node[above=-9pt]{$3$};
\end{tikzpicture}
}
 \def\arbregi{
 \begin{tikzpicture}
\draw[thick] (0,0) -- (0,-0.5);
\draw[thick] (0,-0.5) -- (0.5,-1);
\draw[thick] (0,-0.5) -- (-0.5,-1);
                	\filldraw[green] (0,0) circle(2.5pt);
                	\filldraw[black] (0,-0.5) circle(2pt);
                	\filldraw[black] (0.5,-1) circle(2pt);
                	\filldraw[black] (-0.5,-1) circle(2pt);
					\draw (0.2,0.2) node[above=-9pt]{$i$};
                			\draw (0.2,-0.4) node[above=-9pt]{$1$};
                			\draw (0.5,-1.2) node[above=-9pt]{$3$};
                			\draw (-0.5,-1.2) node[above=-9pt]{$2$};
\end{tikzpicture}
}
\def\abs#1{\left\vert #1 \right\vert}
\def\allpoly{\mbox{$\re\langle X \rangle$}}
\def\allpolyx0degn{\mbox{$P_n$}}
\def\allseries{\mbox{$\re\langle\langle X \rangle\rangle$}}
\def\allseriesdeltam{\mbox{$\re^m\langle\langle X_\delta \rangle\rangle$}}
\def\allseriesell{\mbox{$\re^{\ell} \langle\langle X \rangle\rangle$}}
\def\allseriesm{\mbox{$\re^m\langle\langle X \rangle\rangle$}}
\def\allseriesmdual{\mbox{$\re^m\langle\langle X \rangle\rangle^\ast$}}
\def\allseriesX1{\mbox{$\re [[ X_1 ]]$}}
\def\compAH{\psi} % composition product algebra homomorphism
\def\deg{\mathsf{deg}}
\def\Endallseries{{\rm End}(\allseries)}
\def\eqref#1{(\ref{#1})} % parentheses around referenced equation numbers
\def\Fliessdelta{\mathscr{F}_{\delta}}
\def\gsc{\mathfrak{c}} % Fliess operator generating series 1
\def\gsd{\mathfrak{d}} % Fliess operator generating series 2
\def\gse{\mathfrak{e}} % Fliess operator generating series 3
\def\id{{\rm id}}
\def\mbf#1{\hbox{\mathversion{bold}$#1$}} % math boldface
\def\modcomp{\:\tilde{\circ}\,} % modified composition product
\def\modcompAH{\phi} % modified composition product algebra homomorphism
\def\norm#1{\Vert#1\Vert}
\def\re{{\mathbb R}} % real numbers (AMS symbol)
\def\sameau{\rule[0.017in]{0.2in}{0.012in}}
\def\shuffle{{\scriptscriptstyle \;\sqcup \hspace*{-0.05cm}\sqcup\;}}
\def\spanset{{\rm span}}
\def\begce{\begin{center}}
\def\endce{\end{center}}
\def\begar{\begin{array}}
\def\endar{\end{array}}
\def\begeq{\begin{equation}}
\def\endeq{\end{equation}}
\def\begdi{\begin{displaymath}}
\def\enddi{\end{displaymath}}
\def\begdis{\begin{eqnarray*}}
\def\enddis{\end{eqnarray*}}
\def\begeqa{\begin{eqnarray}}
\def\endeqa{\end{eqnarray}}
\def\begdes{\begin{description}}
\def\enddes{\end{description}}
\def\begit{\begin{itemize}}
\def\endit{\end{itemize}}
\def\begen{\begin{enumerate}}
\def\enden{\end{enumerate}}
\def\beglar{\left[\begin{array}}
\def\endrar{\end{array}\right]}
\def\begle{\begin{lem}}
\def\endle{\end{lem}}
\def\begde{\begin{defn}}
\def\endde{\end{defn}}
\def\begth{\begin{thm}}
\def\endth{\end{thm}}
\def\begco{\begin{coro}}
\def\endco{\end{coro}}
\def\begprop{\begin{proposition}}
\def\endprop{\end{proposition}}
\def\begex{\begin{example}}
\def\endex{\end{example}}
\def\begexer{\begin{exercise}}
\def\endexer{\end{exercise}}
\def\begres{\noindent{\bf Remarks}:\begin{enumerate}}
\def\endres{\end{enumerate} \par}
\def\begpr{\begin{pf}}
\def\endpr{\end{pf}}
\def\begtab{\begin{tabular}}
\def\endtab{\end{tabular}}
\def\rref#1{(\ref{#1})}
\begin{document}

\selectlanguage{english}

\title[Hopf algebra of rooted circle trees]{A combinatorial Hopf algebra for nonlinear\\
output feedback control systems}

\begin{abstract}
In this work a combinatorial description is provided of a Fa\`a di Bruno type Hopf algebra which naturally appears in the context of Fliess operators in nonlinear feedback control theory. It is a connected graded commutative and non-cocommutative Hopf algebra defined on rooted circle trees. A cancellation free forest formula for its antipode is given.
\end{abstract}

\author{Luis A.~Duffaut Espinosa}
\address{Department of Electrical and Computer Engineering, George Mason University, Fairfax, Virginia 22030 USA}
\email{lduffaut@gmu.edu}

\author{Kurusch Ebrahimi-Fard}
\address{Instituto de Ciencias Matem\'aticas, Consejo Superior de Investigaciones Cient\'{i}ficas,
		C/ Nicol\'as Cabrera, no.~13-15, 28049 Madrid, Spain.
		On leave from Univ.~de Haute Alsace, Mulhouse, France}
         \email{kurusch@icmat.es, kurusch.ebrahimi-fard@uha.fr}
         \urladdr{www.icmat.es/kurusch}

\author{W.~Steven Gray}
\address{Department of Electrical and Computer Engineering,
Old Dominion University,
		Norfolk, Virginia 23529 USA}
\email{sgray@odu.edu}

\date{\today}

\maketitle
\tableofcontents

%%%%%%%%%%%%%%%%%%%%%%%%%%%%%%%%%%%%%%%%%
%%%%%%%%%%%%%%%%%%%%%%%%%%%%%%%%%%%%%%%%%

\section{Introduction}
\label{sect:intro}

Fliess operators \cite{Fliess_81,Fliess_83}, also known as Chen--Fliess functional expansions, play a crucial role in the theory of nonlinear control systems \cite{Isidori_95}. Most systems found in science and engineering can be viewed as a set of simpler interconnected subsystems. Ferfera was the first to describe some of the mathematical structures that result when these interconnected subsystems are modeled as Fliess operators \cite{Ferfera_79,Ferfera_80}. Four basic system interconnections are found in most control systems. They are known as the parallel sum, parallel product, cascade and feedback connections. (See  \cite{Gray-Duffauc_Espinosa_FdB14} for a review.) The first two are rather simple to describe mathematically. The third operation is equivalent to composing Fliess operators. Combining Fliess operators through a feedback connection as shown in Figure~\ref{fig:feedback-with-v} is also based on composition. In short, the output of one system is fed into the input of another system, the output of which is then fed back into the input of the first. This feedback loop is, mathematically speaking, the most interesting interconnection. When one or both of the subsystems is nonlinear, the nature of the composite system can be remarkably complex. Despite this, it turns out that given two nonlinear input-output systems written in terms of such Chen--Fliess functional expansions, it is known that the feedback interconnected system is always well defined and in the same class \cite{Gray-Li_05,Gray-et-al_CLCA13}. An explicit formula for the generating series of a single-input, single-output (SISO) closed-loop system was provided by two of the authors, L.~Duffaut Espinosa and W.S.~Gray, in earlier work \cite{Gray-Duffauc_Espinosa_SCL11,Gray-Duffauc_Espinosa_FdB14} in the context of a Fa\`a di Bruno type Hopf algebra. L.~Foissy \cite{Foissy_13} made a crucial contribution by showing that this Hopf algebra is connected. As a result, a simple standard recursive formula could be used to calculate the antipode \cite{Gray-et-al_MTNS14}, which plays an important role in the analysis and design of feedback systems \cite{Gray-et-al_AUTOxx,Gray-et-al_CLCA13}. In a more recent work, the full multivariable extension of the theory as it applies to control theory is presented \cite{Gray-et-al_SCL14}. An obvious but largely non-physical multivariable extension is provided in \cite{Foissy_EJM15}. The focus here will be on the extension in \cite{Gray-et-al_SCL14} which, as will be described later, has a richer structure and more realistic properties from the point of view of proper applications in control theory.

\begin{figure}[t]
\begin{center}
\includegraphics*[scale=0.45]{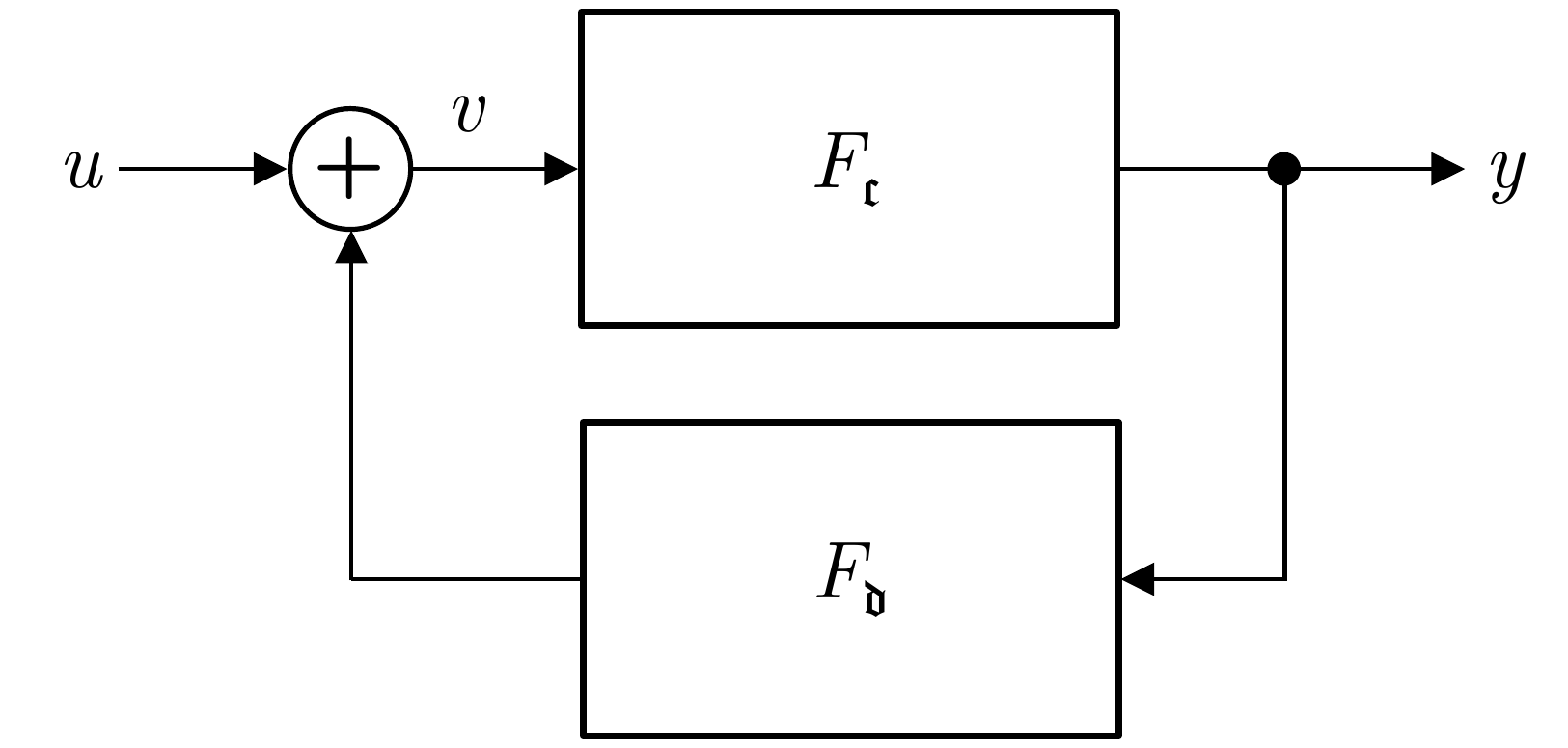}
\end{center}
\caption{Feedback connection of Fliess operators $F_\gsc$ and $F_\gsd$}
\label{fig:feedback-with-v}
\end{figure}

Foissy's insight regarding the grading in \cite{Foissy_13} brought into focus the combinatorial nature of the Fa\`a di Bruno type Hopf algebra found by L.~Duffaut Espinosa and W.S.~Gray. However, the central coproduct was presented in terms of two recursions, which made it difficult to understand its representation as a combinatorial algebra. M.~Aguiar characterized the latter in terms of a connected graded vector space with homogeneous components spanned by finite sets of combinatorial objects, and the algebraic structures are given by particular constructions on those objects. In the present work, a set of combinatorial objects is given that allows one to realize the algebraic structures of the aforementioned multivariable extension of the Hopf algebra of L.~Duffaut Espinosa and W.S.~Gray in terms of combinatorial operations on those objects. The elements in this set are based on the notion of rooted circle trees on which a connected graded bialgebra is defined in terms of extractions of rooted circle subtrees. The latter can appear nested, disjoint or overlapping, which is remotely similar to the situation found in Hopf algebras of Feynman graphs \cite{Connes-Kreimer_00,GBFV,Manchon_08}. Motivated by this analogy, a Zimmermann type forest formula on rooted circle trees is given, which provides a solution of the standard recursions for the antipode. As a result, the antipode can be calculated in a way which is free of cancellations. This is of interest with respect to applications in control theory such as system inversion and output tracking, where the antipode needs to be computed explicitly to high order. Thus, combinatorial efficient methods are needed for its computation.

\smallskip

The paper is organized as follows. In Section \ref{sect:prelim} a few basics on Hopf algebras are collected mainly to fix the notation. Section \ref{sect:rctHopfAlg} describes the Hopf algebra of decorated rooted circle trees. A cancellation free Zimmermann type forest formula is given for the antipode of this Hopf algebra. The pre-Lie algebra structure on rooted circle trees is presented in Subsection \ref{ssect:prct}. In the last section, the link to the multivariable extension of the Fa\`a di Bruno type Hopf algebra of L.~Duffaut Espinosa and W.S.~Gray is described.

%%%%%%%%%%%%%%%%%%%%%%%%%%%%%%%%%%%%%%%

\vspace{0.2cm}
\subsection*{Acknowledgments} The second author was supported by Ram\'on y Cajal research grant RYC-2010-06995 from the Spanish government. The third author was supported by grant SEV-2011-0087 from the Severo Ochoa Excellence Program at the Instituto de Ciencias Matem\'{a}ticas in Madrid, Spain.  This research was also supported by a grant from the BBVA Foundation.

%%%%%%%%%%%%%%%%%%%%%%%%%%%%%%%%%%%%%%%%%
%%%%%%%%%%%%%%%%%%%%%%%%%%%%%%%%%%%%%%%%%

\section{Preliminaries}
\label{sect:prelim}

In this section, a few basics on Hopf algebras are collected to fix the notation. For details, in particular on combinatorial Hopf algebras, the reader is referred to \cite{Figueroa-Gracia-Bondia_05,GBFV,Manchon_08}, as well as the standard reference \cite{Sweedler_69}.

In general, $k$ denotes the ground field of characteristic zero over which all algebraic structures are defined. A {\it{coalgebra}} over $k$ consist of a triple $(C,\Delta,\varepsilon)$. The coproduct $\Delta: C \to C \otimes C$ is coassociative, that is, $(\Delta \otimes \id)\circ \Delta=(\id \otimes \Delta)\circ \Delta$, and $\varepsilon: C \to k$ denotes the counit map. For a cocommutative coalgebra $\Delta = \pi \circ \Delta$, where the flip map $\pi$ is defined on $C \otimes C$ by $\pi(x \otimes y) = y \otimes x$. A {\it{bialgebra}} $B$ is both a unital algebra and a counital coalgebra together with compatibility relations, such as both the algebra product, $m$, and unit map, $e: k \to B$, are coalgebra morphisms \cite{Sweedler_69}. The unit of $B$ is denoted by $\mathbf{1} = e(1)$. A bialgebra is called {\it{graded}} if there are $k$-vector subspaces $B^{(n)}$, $n \geq 0$ such that $B= \bigoplus_{n \geq 0} B^{(n)}$, $m(B^{(p)} \otimes B^{(q)}) \subseteq B^{(p+q)}$ and $\Delta(B^{(n)}) \subseteq \bigoplus_{p+q=n} B^{(p)}\otimes B^{(q)}.$ Elements $x \in B^{(n)}$ are given a degree $\mathsf{deg}(x)=n$. Moreover, $B$ is called {\it{connected}} if $B^{(0)} = k\mathbf{1}$. Define $B^+=\bigoplus_{n > 0} B^{(n)}$. For any $x \in B^{(n)}$ the coproduct is of the form
\begin{equation*}
	\Delta(x) = x \otimes \mathbf{1} + \mathbf{1} \otimes x + \Delta'(x) \in \bigoplus_{k+l=n} B^{(k)} \otimes B^{(l)},
\end{equation*}
where $\Delta'(x) := \Delta(x) -  x \otimes \mathbf{1} - \mathbf{1} \otimes x \in B^+ \otimes B^+$ is the {\em reduced} coproduct. By definition an element $x \in B$ is {\it{primitive}} if $\Delta'(x) = 0.$  Suppose $A$ is a $k$-algebra with product $m_A$ and unit $e_A$, e.g., $A=k$ or $A=B$. The vector space  $L(B, A)$ of linear maps from the bialgebra $B$ to $A$ together with the convolution product $\Phi \star \Psi := m_{A} \circ (\Phi \otimes \Psi) \circ \Delta : B \to A$, where $\Phi,\Psi \in L(B,A)$, is an associative algebra with unit $\iota := e_{A} \circ \varepsilon$.

A {\it{Hopf algebra}} $H$ is a bialgebra together with a particular $k$-linear map called an {\it{antipode}} $S: H \to H$ which satisfies the Hopf algebra axioms~\cite{Sweedler_69}. When $A=H$, the antipode $S \in L(H,H)$ is characterized as the inverse of the identity map with respect to the convolution product, that is,
\begin{equation*}
    S  \star\id  = m \circ (S \otimes \id) \circ \Delta = \id \star S= e \circ \varepsilon.
\end{equation*}
It is well-known that any connected graded bialgebra is a {\sl{connected graded Hopf algebra}\/}.

Let $H=\bigoplus_{n \ge 0} H^{(n)}$ be a connected graded Hopf algebra. Suppose $A$ is a commutative unital algebra. The subset $g_0 \subset  L( H, A)$ of linear maps $\alpha$ that send the unit to zero, $\alpha(\mathbf{1})=0$, forms a Lie algebra in $ L( H, A)$. The exponential $ \exp^\star(\alpha) = \sum_{j\ge 0} \frac{1}{j!}\alpha^{\star j}$ is well defined and gives a bijection from $g_0$ onto the group $G_0 = \iota + g_0$ of linear maps, $\gamma$, that send the unit of $H$ to the algebra unit in $A$, $\gamma(\mathbf{1})=1_{A}$. An {\it{infinitesimal character}} with values in $A$ is a linear map $\xi \in L( H, A)$ such that for $x, y \in  H$, $\xi(xy) = \xi(x)\iota(y) + \iota(x)\xi(y)$. The linear space of infinitesimal characters is denoted $g_{A} \subset g_0$. An $A$-valued map $\Phi$ in $ L( H, A)$ is called a {\it{character}} if $\Phi(\mathbf{1})=1_{ A}$ and for $x,y \in  H$, $\Phi(xy) = \Phi(x)\Phi(y)$. The set of characters is denoted by $G_{ A} \subset G_0$. It forms a pro-unipotent group for the convolution product with (pro-nilpotent) Lie algebra $ g_{ A}$ \cite{GBFV,Manchon_08}. The exponential map $\exp^{\star}$ restricts to a bijection between $ g_{ A}$ and $G_{ A}$. The neutral element $\iota:=e_{ A}\circ \epsilon$  in $G_{ A}$ is given by $\iota(\mathbf{1})=1_{ A}$ and $\iota(x) = 0$ for $x \in \mathsf{Ker}(\varepsilon)=H^+$. The inverse of $\Phi \in G_{ A}$ is given by composition with the Hopf algebra antipode $S$, i.e., $\Phi^{\star -1} = \Phi \circ S$.

%%%%%%%%%%%%%%%%%%%%%%%%%%%%%%%%%%%%%%%%%

\section{Hopf algebra of rooted circle trees}
\label{sect:rctHopfAlg}

The presentation begins with a description of a class of rooted circle trees. The motivation for this setup will become clear once the theory of Fliess operators in nonlinear feedback control theory and the related  Fa\`a di Bruno type Hopf algebra are presented in the next section.

\begin{defn}\label{def:rct}
A {\it{rooted circle tree}} (rct) $c^i$ consists of a set of vertices, $V(c^i)$, one of which is distinguished as the root indexed by $i$, where $i \in \mathbb{N}$, $1 \le i \le m$. The vertices are strictly ordered and define a clockwise oriented closed edge $e$. The {\em weight} of $c^i$ is defined by its number of vertices, i.e., $|c^i|:=|V(c^i)|$. The empty rct is denoted by $\mathbf{1}$, and $|\mathbf{1}|=|V(\mathbf{\emptyset})|=0$. The set of internal vertices of $c^i$ is denoted $V(c)$ and is defined by excluding the root vertex from $V(c^i)$.
\end{defn}

A rooted circle tree with $n$ vertices may be seen as an $n$-polygone, with edges oriented clockwise starting from the root, such that each vertex has exactly one incoming and one outgoing edge. The last outgoing edge is the incoming edge of the root vertex. Here are the first few rooted circle trees of weights one up to five:
$$
	\et \quad \bt \quad \bbt \quad \bbbt \quad \bbbbt
$$
The root is denoted by a black square to distinguish it clearly from the internal vertices. A {\it{decorated rooted circle tree}} is a rct $c^i$ together with a mapping $\mathsf{Dec}: V(c) \to X$ from the set of its internal vertices $V(c)$ to the alphabet $X:=\{x_0,x_1,\ldots,x_m\}$ represented by
$$
\raisebox{-20pt}{
 \begin{tikzpicture}
 \draw[thick] (0cm,0cm) circle(0.4cm);
                \filldraw[black] (90:0.4cm) (0.08,.48)rectangle(-.08,.34);
                                \draw (75:0.60cm) node[above=-9pt]{$\hspace{-0.2cm}\phantom{l}^{i}$};
	 \filldraw[black] (45:0.4cm) circle(1.7pt);
	 		\draw (30:0.73cm) node[above=-9pt]{${\hspace{-0.2cm}\phantom{l}^{l_1}}$};
	 \filldraw[black] (0:0.4cm) circle(1.7pt);
	 		\draw (-8:0.73cm) node[above=-9pt]{$\hspace{-0.2cm}\phantom{l}^{l_2}$};
	 \filldraw[black] (-45:0.4cm) circle(1.7pt);
	 		\draw (-46:0.74cm) node[above=-9pt]{$\hspace{-0.2cm}\phantom{l}^{l_3}$};
	 \filldraw[black] (-105:0.33cm) circle(0.3pt);
	 \filldraw[black] (-130:0.33cm) circle(0.3pt);
	 \filldraw[black] (-154:0.33cm) circle(0.3pt);
	 \filldraw[black] (-210:0.4cm) circle(1.7pt);
	 		\draw (-215:0.76cm) node[above=-9pt]{$\hspace{0.15cm}\phantom{l}^{l_k}$};
 \end{tikzpicture}}
$$
The set (resp.~the linear span) of rooted circle trees will be denoted by $C^{\circ}$ (resp.~$\mathcal{C}^{\circ}$). The set (resp.~the linear span) of rooted circle trees decorated by an alphabet $X$ will be denoted by $C_X^{\circ}$ (resp.~$\mathcal{C}_X^{\circ}$). Consider a particular grading on the set $X$: $|x_0|=2$ and $|x_i|=1$ for all $i \neq 0$. The {\it degree} of a decorated rooted circle tree $c^i \in C_X^{\circ}$ of weight $|c^i|=k+1$ is defined by
\begin{equation}
\label{def:degree}
	\deg(c^i) := |x_{l_1}| + \cdots + |x_{l_k}| + 1.
\end{equation}
 Clearly this grading on $X$ distinguishes internal vertices of a rct decorated by $x_0$ from the rest. They will play an important role in the Hopf algebraic structure to be defined further below. To this end, it is natural to suppress the decoration by letters $x_k$ in favor of using colored vertices $f:=\{ \begin{tikzpicture}  \filldraw[black] (0,0) circle(2pt); \end{tikzpicture}, \begin{tikzpicture}  \filldraw[fill=white, draw=black,thick] (0,0) circle(2pt); \end{tikzpicture}\}$ and to consider bicolored rcts, i.e., rcts with inner vertices being colored either black or white. The white vertices correspond to a decoration by the letter $x_0$, whereas the black vertices represent decorations by any of the letters $x_k$, $k \neq 0$. Note that the decoration of the root is fixed to be $i$, $1 \le i \le m$. Here are the first few bicolored rcts of degree one up to five:
$$
	\et 		\qquad 
	\bt 		\qquad 
	\bbt  		\qquad 
	\rt  		\qquad
	\bbbt 	\qquad 
	\rbt 		\qquad 
	\brt 		\qquad
$$
$$
	\bbbbt 	\qquad
	\bbrt 		\qquad 
	\brbt 		\qquad 
	\rbbt 		\qquad 
	\rrt
$$
It will also be useful to have the white vertices of a rct $c^i$ enumerated consecutively in increasing order. For instance, the $s$ white vertices of the rct below are clockwise labeled from $1$ to $s$ 
$$
\raisebox{-20pt}{
 \begin{tikzpicture}
 \draw[thick] (0cm,0cm) circle(0.4cm);
                \filldraw[black] (90:0.4cm) (0.08,.48)rectangle(-.08,.34);
                                \draw (75:0.60cm) node[above=-9pt]{$\hspace{-0.2cm}\phantom{l}^{i}$};
	 \filldraw[black] (45:0.33cm) circle(0.2pt);
	 \filldraw[black] (35:0.33cm) circle(0.2pt);
	 \filldraw[black] (25:0.33cm) circle(0.2pt);
	 \filldraw[fill=white, draw=black,thick] (0:0.4cm) circle(1.7pt);
	 		\draw (-8:0.6cm) node[above=-9pt]{$\hspace{-0.2cm}\phantom{l}^{1}$};
 	\filldraw[black] (-25:0.33cm) circle(0.2pt);
	 \filldraw[black] (-35:0.33cm) circle(0.2pt);
	 \filldraw[black] (-45:0.33cm) circle(0.2pt);			
	 \filldraw[fill=white, draw=black,thick] (-75:0.4cm) circle(1.7pt);
	 		\draw (-70:0.6cm) node[above=-9pt]{$\hspace{-0.cm}\phantom{l}^{2}$};
	 \filldraw[black] (-105:0.33cm) circle(0.3pt);
	 \filldraw[black] (-115:0.33cm) circle(0.3pt);
	 \filldraw[black] (-125:0.33cm) circle(0.3pt);
	 \filldraw[fill=white, draw=black,thick] (-150:0.4cm) circle(1.7pt);
	 		\draw (-150:0.6cm) node[above=-9pt]{${\hspace{-0.1cm}\phantom{l}^s}$};
	 \filldraw[black] (-180:0.33cm) circle(0.3pt);
	 \filldraw[black] (-190:0.33cm) circle(0.3pt);
	 \filldraw[black] (-200:0.33cm) circle(0.3pt);
 \end{tikzpicture}}
$$
Denote by $|c^i|_{\begin{tikzpicture}  \filldraw[black] (0,0) circle(1.3pt); \end{tikzpicture}}$ and $|c^i|_{\begin{tikzpicture}  \filldraw[fill=white, draw=black,thick] (0,0) circle(1.3pt); \end{tikzpicture}}$ the number of black and white vertices of $c^i$, respectively. The weight of a bicolored rct $c^i$ is defined as before by the number of its internal vertices plus one. The degree of a bicolored rct $c^i$ is defined by:
$$
	\deg(c^i):= 2 |c^i|_{\begin{tikzpicture}  \filldraw[fill=white, draw=black,thick] (0,0) circle(1.3pt); \end{tikzpicture}}
		+ |c^i|_{\begin{tikzpicture}  \filldraw[black] (0,0) circle(1.3pt); \end{tikzpicture}}
		+ 1.
$$
The set (resp.~the linear span) of bicolored rooted circle trees will be denoted by $C_f^{\circ}$ (resp.~$\mathcal{C}_f^{\circ}$).
The use of bicolored rcts with white vertices enumerated consecutively significantly simplifies the description of the combinatorial structure to be defined next
on decorated rooted circle trees.

\smallskip

The set of internal vertices $V(c) \subset V(c^i)$ of a rct $c^i$ is naturally ordered with the minimal element being the first vertex to the right of the root, and the maximal one being the vertex to the left of the root. Therefore, the elements in any subset $J \subseteq V(c)$ can be written in strictly increasing order. 

Consider next the notion of {\it{admissible subsets}} of $V(c) \subset V(c^i)$.

\begin{defn}\label{def:admSubset}
Let $c^i$ be a rct in $C_f^{\circ}$. A subset  $J \subseteq V(c)$ is called {\em admissible} if its minimal element is a white vertex.
\end{defn}

For instance, all subsets of the rct
$$
	\rrrt
$$
are obviously admissible. The only admissible subset of the rct
$$
	\brt
$$
is the singleton $J_{1}=\{\begin{tikzpicture}  \filldraw[fill=white, draw=black,thick] (0,0) circle(2pt); \end{tikzpicture}\}$. For the rct
$$
	\rbt
$$
there are two admissible subsets, $J_{11}=\{\begin{tikzpicture}  \filldraw[fill=white, draw=black,thick] (0,0) circle(2pt); \end{tikzpicture}\}$ and $J_{12}=\{\begin{tikzpicture}  \filldraw[fill=white, draw=black,thick] (0,0) circle(2pt); \end{tikzpicture},\begin{tikzpicture}  \filldraw[black] (0,0) circle(2pt); \end{tikzpicture}\}$. The rct
$$
	\rrbt
$$
has six admissible subsets:
$$
	J_{11}=\{\begin{tikzpicture}  \filldraw[fill=white, draw=black,thick] (0,0) circle(2pt); \end{tikzpicture}\},
	J_{21}=\{\begin{tikzpicture}  \filldraw[fill=white, draw=black,thick] (0,0) circle(2pt); \end{tikzpicture}\},
	J_{12}=\{\begin{tikzpicture}  \filldraw[fill=white, draw=black,thick] (0,0) circle(2pt); \end{tikzpicture},
		\begin{tikzpicture}  \filldraw[fill=white, draw=black,thick] (0,0) circle(2pt); \end{tikzpicture}\},
	J_{13}=\{\begin{tikzpicture}  \filldraw[fill=white, draw=black,thick] (0,0) circle(2pt); \end{tikzpicture},
		\begin{tikzpicture}  \filldraw[black] (0,0) circle(2pt); \end{tikzpicture}\},
	J_{22}=\{\begin{tikzpicture}  \filldraw[fill=white, draw=black,thick] (0,0) circle(2pt); \end{tikzpicture},
		\begin{tikzpicture}  \filldraw[black] (0,0) circle(2pt); \end{tikzpicture}\},
	J_{14}=\{\begin{tikzpicture}  \filldraw[fill=white, draw=black,thick] (0,0) circle(2pt); \end{tikzpicture},
		\begin{tikzpicture}  \filldraw[fill=white, draw=black,thick] (0,0) circle(2pt); \end{tikzpicture},
		\begin{tikzpicture}  \filldraw[black] (0,0) circle(2pt); \end{tikzpicture}\}.
$$
A remark is in order regarding notation of the admissible subsets $J_{kl}$. The first index corresponds to the label of the white vertex being the minimal element in $J_{kl}$. The second index serves to distinguish among the possible admissible subsets that have the $k$-th white vertex as minimal element. 
The rct
$$
	\rbbt
$$
has four admissible subsets all of which share the same white vertex as minimal element:
$$
	J_{11}=\{\begin{tikzpicture}  \filldraw[fill=white, draw=black,thick] (0,0) circle(2pt); \end{tikzpicture}\},
	J_{12}=\{\begin{tikzpicture}  \filldraw[fill=white, draw=black,thick] (0,0) circle(2pt); \end{tikzpicture},
		\begin{tikzpicture}  \filldraw[black] (0,0) circle(2pt); \end{tikzpicture}\},
	J_{13}=\{\begin{tikzpicture}  \filldraw[fill=white, draw=black,thick] (0,0) circle(2pt); \end{tikzpicture},
		\begin{tikzpicture}  \filldraw[black] (0,0) circle(2pt); \end{tikzpicture}\},
	J_{14}=\{\begin{tikzpicture}  \filldraw[fill=white, draw=black,thick] (0,0) circle(2pt); \end{tikzpicture},
		\begin{tikzpicture}  \filldraw[black] (0,0) circle(2pt); \end{tikzpicture},
		\begin{tikzpicture}  \filldraw[black] (0,0) circle(2pt); \end{tikzpicture}\}.
$$
The rct
$$
	\brbrt
$$
has five admissible subsets:
$$
	J_{11}=\{\begin{tikzpicture}  \filldraw[fill=white, draw=black,thick] (0,0) circle(2pt); \end{tikzpicture}\},
	J_{21}=\{\begin{tikzpicture}  \filldraw[fill=white, draw=black,thick] (0,0) circle(2pt); \end{tikzpicture}\},
	J_{12}=\{\begin{tikzpicture}  \filldraw[fill=white, draw=black,thick] (0,0) circle(2pt); \end{tikzpicture},
		\begin{tikzpicture}  \filldraw[black] (0,0) circle(2pt); \end{tikzpicture}\},
	J_{13}=\{\begin{tikzpicture}  \filldraw[fill=white, draw=black,thick] (0,0) circle(2pt); \end{tikzpicture},
		\begin{tikzpicture}  \filldraw[fill=white, draw=black,thick] (0,0) circle(2pt); \end{tikzpicture}\},
	J_{14}=\{\begin{tikzpicture}  \filldraw[fill=white, draw=black,thick] (0,0) circle(2pt); \end{tikzpicture},
		\begin{tikzpicture}  \filldraw[black] (0,0) circle(2pt); \end{tikzpicture},
		\begin{tikzpicture}  \filldraw[fill=white, draw=black,thick] (0,0) circle(2pt); \end{tikzpicture}\}.
$$

\begin{rem}\label{BCKhopf}
The notion of admissible subsets of rcts is remotely analogous to the notion of admissible cuts on non-planar rooted trees in the context of the Butcher--Connes--Kreimer Hopf algebra. See \cite{Figueroa-Gracia-Bondia_05,Manchon_08} for reviews.
\end{rem}

\begin{defn}\label{def:admExtract}
For any rct $c^i \in C_f^{\circ}$ with $n \ge 0$ white vertices, define the set of all admissible subsets $W(c^i):=\{J_{11},\ldots,J_{1i_1},\ldots , J_{n1},\ldots,J_{ni_n}\}$. A subset $\mathcal{J} \subset W(c^i)$ is an {\it{admissible extraction}} if it consists of pairwise disjoint admissible subsets, i.e., $J_{ab},J_{uv} \in \mathcal{J}$ implies $J_{ab} \cap J_{uv} = \emptyset$ if $a \neq u$. The set of all admissible extractions of $c^i$, including the empty set as well as all vertices, $V(c^i)$, is denoted $\mathcal{W}(c^i)$.  The set of {\it{proper admissible extractions}} of $c^i$, $\mathcal{W}'(c^i)$, excludes both the empty set and $V(c^i)$.
\end{defn}

For each admissible subset, $J_{lj}$, in an admissible extraction $\mathcal{J}$, one can associate an oriented polygon inside $c^i$ by connecting the vertices of $J_{lj}$ by oriented edges according to the total order implied by $c^i$ inside each admissible subset $J_{lj}$. For instance, returning to the previous example
$$
	\brbrt
$$
the following are all possible admissible extractions:
$$
	\mathcal{J}_1=\{J_{11}\},\
	\mathcal{J}_2=\{J_{21}\},\
	\mathcal{J}_3=\{J_{12}\},\
	\mathcal{J}_4=\{J_{13}\},\
$$
$$
	\mathcal{J}_5=\{J_{14}\},\
	\mathcal{J}_6=\{J_{11},J_{21}\},\
	\mathcal{J}_7=\{J_{12},J_{21}\}.
$$
Pictorially these admissible extractions can be represented, respectively, by oriented polygons inscribed into the rct $c^i$:
\begin{equation}
\label{ex:polygons}
\begin{tikzpicture}
 \draw[thick] (0cm,0cm) circle(0.4cm);
                	\filldraw[black] (90:0.4cm) (0.08,.48)rectangle(-.08,.34);
                			\draw (75:0.60cm) node[above=-9pt]{$\hspace{-0.2cm}\phantom{l}^{i}$};
	\filldraw[black] (25:0.4cm) circle(1.7pt);
	 \draw[thick=black] (-50:0.4cm) arc (-45:360:0.1cm);
	\filldraw[fill=white, draw=black,thick] (-50:0.4cm) circle(1.7pt);
	\filldraw[black] (-130:0.4cm) circle(1.7pt);
	\filldraw[fill=white, draw=black,thick] (-200:0.4cm) circle(1.7pt);
 \end{tikzpicture}
\qquad\
\begin{tikzpicture}
 \draw[thick] (0cm,0cm) circle(0.4cm);
                	\filldraw[black] (90:0.4cm) (0.08,.48)rectangle(-.08,.34);
                			\draw (75:0.60cm) node[above=-9pt]{$\hspace{-0.2cm}\phantom{l}^{i}$};
	\filldraw[black] (25:0.4cm) circle(1.7pt);
	\filldraw[fill=white, draw=black,thick] (-50:0.4cm) circle(1.7pt);
	\filldraw[black] (-130:0.4cm) circle(1.7pt);
	 	\draw[thick=black] (-200:0.4cm) arc (-200:180:0.1cm);
	\filldraw[fill=white, draw=black,thick] (-200:0.4cm) circle(1.7pt);	
 \end{tikzpicture}
\qquad\
\begin{tikzpicture}
 \draw[thick] (0cm,0cm) circle(0.4cm);
                	\filldraw[black] (90:0.4cm) (0.08,.48)rectangle(-.08,.34);
                			\draw (75:0.60cm) node[above=-9pt]{$\hspace{-0.2cm}\phantom{l}^{i}$};
	\filldraw[black] (25:0.4cm) circle(1.7pt);	
	 	\draw[thick=black] (-50:0.4cm) to [bend left=-40]  (-130:0.4cm);
	 	\draw[thick=black] (-50:0.4cm) to [bend left=-5]  (-130:0.4cm);
	\filldraw[fill=white, draw=black,thick] (-50:0.4cm) circle(1.7pt);
	\filldraw[black] (-130:0.4cm) circle(1.7pt);
	\filldraw[fill=white, draw=black,thick] (-200:0.4cm) circle(1.7pt);
 \end{tikzpicture}
\qquad\
\begin{tikzpicture}
 \draw[thick] (0cm,0cm) circle(0.4cm);
                	\filldraw[black] (90:0.4cm) (0.08,.48)rectangle(-.08,.34);
                			\draw (75:0.60cm) node[above=-9pt]{$\hspace{-0.2cm}\phantom{l}^{i}$};
	\filldraw[black] (25:0.4cm) circle(1.7pt);
	 	\draw[thick=black] (-200:0.4cm) to [bend left=-15] (-50:0.4cm);   	
	 	\draw[thick=black] (-200:0.4cm) to [bend left=15] (-50:0.4cm);
	\filldraw[fill=white, draw=black,thick] (-50:0.4cm) circle(1.7pt);
	\filldraw[black] (-130:0.4cm) circle(1.7pt);
	\filldraw[fill=white, draw=black,thick] (-200:0.4cm) circle(1.7pt);
 \end{tikzpicture}
\qquad\
\begin{tikzpicture}
 \draw[thick] (0cm,0cm) circle(0.4cm);
                	\filldraw[black] (90:0.4cm) (0.08,.48)rectangle(-.08,.34);
                			\draw (75:0.60cm) node[above=-9pt]{$\hspace{-0.2cm}\phantom{l}^{i}$};
	\filldraw[black] (25:0.4cm) circle(1.7pt);
	 	\draw[thick=black] (-50:0.4cm) to [bend left=-15]  (-130:0.4cm);
		\draw[thick=black] (-200:0.4cm) to [bend left=15] (-50:0.4cm);
	\filldraw[fill=white, draw=black,thick] (-50:0.4cm) circle(1.7pt);
	 	\draw[thick=black] (-130:0.4cm) to [bend left=-15] (-200:0.4cm);
	\filldraw[black] (-130:0.4cm) circle(1.7pt);
	\filldraw[fill=white, draw=black,thick] (-200:0.4cm) circle(1.7pt);
 \end{tikzpicture}
\qquad\
\begin{tikzpicture}
 \draw[thick] (0cm,0cm) circle(0.4cm);
                	\filldraw[black] (90:0.4cm) (0.08,.48)rectangle(-.08,.34);
                			\draw (75:0.60cm) node[above=-9pt]{$\hspace{-0.2cm}\phantom{l}^{i}$};
	\filldraw[black] (25:0.4cm) circle(1.7pt);
		\draw[thick=black] (-50:0.4cm) arc (-45:360:0.1cm);
	\filldraw[fill=white, draw=black,thick] (-50:0.4cm) circle(1.7pt);
	\filldraw[black] (-130:0.4cm) circle(1.7pt);
	 	\draw[thick=black] (-200:0.4cm) arc (-200:180:0.1cm);
	\filldraw[fill=white, draw=black,thick] (-200:0.4cm) circle(1.7pt);
 \end{tikzpicture}
\qquad\
\begin{tikzpicture}
 \draw[thick] (0cm,0cm) circle(0.4cm);
                	\filldraw[black] (90:0.4cm) (0.08,.48)rectangle(-.08,.34);
                			\draw (75:0.60cm) node[above=-9pt]{$\hspace{-0.2cm}\phantom{l}^{i}$};
	\filldraw[black] (25:0.4cm) circle(1.7pt);
		\draw[thick=black] (-50:0.4cm) to [bend left=-40] (-130:0.4cm);
		 \draw[thick=black] (-50:0.4cm) to [bend left=-5] (-130:0.4cm);
	\filldraw[fill=white, draw=black,thick] (-50:0.4cm) circle(1.7pt);
	\filldraw[black] (-130:0.4cm) circle(1.7pt);
		\draw[thick=black] (-200:0.4cm) arc (-200:180:0.1cm);
	\filldraw[fill=white, draw=black,thick] (-200:0.4cm) circle(1.7pt);
 \end{tikzpicture}
\end{equation}
Note that each polygon starts and ends at a particular white vertex, i.e., the minimal element in the corresponding admissible subset.

Let $\mathcal{J} \in \mathcal{W}(c)$ be an admissible extraction. Observe that the order of the white vertices implies an order in $\mathcal{J}=(J_{l_1j_1},\ldots, J_{l_sj_s})$ according to the order of the minimal elements ${l_1} < \cdots < {l_s}$. The indices indicate the corresponding minimal element of the admissible subsets, that is, $0 \leq s \leq |c^i|_{\begin{tikzpicture}  \filldraw[fill=white, draw=black,thick] (0,0) circle(1pt); \end{tikzpicture}}$. Let $J_{l_1j_1}$ be an admissible subset of the rct $c^i$ with the $l_1$-th white vertex of $c^i$ as minimal element. Define $(c^i /  J_{l_1j_1})$ to be the rct with $V(c^i /  J_{l_1j_1}):=V(c^i) - J_{l_1j_1}$, that is, the rct formed by eliminating in $V(c^i)$ the vertices corresponding to the admissible subset $J_{l_1j_1}$. The rct ${c^i_{n_{l_1}}}:=(c^i /  J_{l_1j_1})_{n_{l_1}}$ is defined by replacing in $c^i$ the $l_1$-th white vertex corresponding to the minimal element of $J_{l_1j_1}$ by a black vertex (i.e.~a vertex decorated by $x_{n_{l_1}}$) and eliminating in $V(c)$ the vertices corresponding to the remaining elements in $J_{l_1j_1}$. The rct $c^{n_{l_1}}_{|J_{l_1j_1}}$ is defined in terms of the vertices of $J_{l_1j_1}$ by making the minimal element in $J_{l_1j_1}$ the new root labeled ${n_{l_1}}$, and the remaining elements of the admissible subset $J_{l_1j_1}$ are the internal vertices ordered according to the order in $J_{l_1j_1}$. Note that $|V({c^i_{n_{l_1}}})|=|V(c^i)| - |J_{l_1j_1}|+1$ and $|V(c^{n_{l_1}}_{|J_{l_1j_1}})|=|J_{l_1j_1}|$. The rct $c^{n_{l_1}}_{|J_{l_1j_1}}$ may be considered a rooted circle subtree (sub-rct) of $c^i$. Hence, to any admissible set $J_{l_1j_1}$ there corresponds a unique pair of rcts $(c^i_{n_{l_1}}, c^{n_{l_1}}_{|J_{l_1j_1}})$. This somewhat unwieldy notation becomes much clearer when working directly with the diagrammatical notation of rcts. Hence, continuing the example above, the admissible subset $J_{12}=\{\begin{tikzpicture}  \filldraw[fill=white, draw=black,thick] (0,0) circle(2pt); \end{tikzpicture},\begin{tikzpicture}  \filldraw[black] (0,0) circle(2pt); \end{tikzpicture}\}$

$$
\begin{tikzpicture}
 \draw[thick] (0cm,0cm) circle(0.4cm);
                	\filldraw[black] (90:0.4cm) (0.08,.48)rectangle(-.08,.34);
                			\draw (75:0.60cm) node[above=-9pt]{$\hspace{-0.2cm}\phantom{l}^{i}$};
	 \draw[thick=black] (-50:0.4cm) to [bend left=-40]  (-130:0.4cm);
	 \draw[thick=black] (-50:0.4cm) to [bend left=-5]  (-130:0.4cm);
	\filldraw[black] (25:0.4cm) circle(1.7pt);
	\filldraw[fill=white, draw=black,thick] (-50:0.4cm) circle(1.7pt);
	\filldraw[black] (-130:0.4cm) circle(1.7pt);
	\filldraw[fill=white, draw=black,thick] (-200:0.4cm) circle(1.7pt);
 \end{tikzpicture}
$$
gives the rcts
$$
c_{n_1}^i=\raisebox{-20pt}{
\begin{tikzpicture}{
 \draw[thick] (0cm,0cm) circle(0.4cm);
                	\filldraw[black] (90:0.4cm) (0.08,.48)rectangle(-.08,.34);
                			\draw (75:0.60cm) node[above=-9pt]{$\hspace{-0.2cm}\phantom{l}^{i}$};
	\filldraw[black] (0:0.4cm) circle(1.7pt);
	\filldraw[black] (-90:0.4cm) circle(1.7pt);
			\draw(-60:0.45cm) node[above=-14pt]{$\hspace{-0.2cm}\phantom{l}^{n_1}$};
	\filldraw[fill=white, draw=black,thick] (-180:0.4cm) circle(1.7pt);
 }\end{tikzpicture}}
 \qquad\
 c_{|J_{12}}^{n_1}= \raisebox{-10pt}{
\begin{tikzpicture}
 \draw[thick] (0cm,0cm) circle(0.4cm);
                	\filldraw[black] (90:0.4cm) (0.08,.48)rectangle(-.08,.34);
                			\draw (65:0.63cm) node[above=-9pt]{$\hspace{-0.2cm}\phantom{l}^{n_1}$};
	\filldraw[black] (-90:0.4cm) circle(1.7pt);
 \end{tikzpicture}}
$$

In the following, with the aim to simplify notation, an admissible subset of a rct is indexed only by its minimal element. If several admissible subsets with the same minimal element are considered then again pairs of indices are used, where, as before, the first index indicates the white vertex and the second index labels the particular admissible subset.

\begin{rem}\label{rmk:Physics} \hfill
\begin{enumerate}
\item
Any admissible subset $J_l$ of a rct $c^i$ will be associated to a sub-rct of $c^i$ by identifying its minimal element as the root. See \eqref{ex:polygons} for an example. The basic assumption is that two admissible subsets shall not have the same minimal element, i.e., roots. Apart from this, the following cases can occur when comparing two admissible subsets $J_k$ and $J_l$, $k \neq l$. If $k > l$  then the two sub-rcts are either strictly nested, $J_k \subset J_l$, disjoint $J_k \cap J_l = \emptyset$, or they overlap, i.e., $J_k \cap J_l \neq \emptyset$. The first case is illustrated in the rct
$$
\begin{tikzpicture}
 \draw[thick] (0cm,0cm) circle(0.4cm);
                	\filldraw[black] (90:0.4cm) (0.08,.48)rectangle(-.08,.34);
                			\draw (75:0.60cm) node[above=-9pt]{$\hspace{-0.2cm}\phantom{l}^{i}$};
	 \draw[thick=black] (0:0.4cm) to [bend left=-40]  (-40:0.4cm);
	  \draw[thick=black] (-40:0.4cm) to [bend left=-20]  (-90:0.4cm);
	    \draw[thick=black] (-90:0.4cm) to [bend left=-3]  (-180:0.4cm);
	    \draw[thick=black] (-180:0.4cm) to [bend left=20]  (-0:0.4cm);
	 \draw[thick=black] (-40:0.4cm) to [bend left=-20]  (-180:0.4cm);
	 \draw[thick=black] (-40:0.4cm) to [bend left=15]  (-180:0.4cm);
	\filldraw[black] (45:0.4cm) circle(1.7pt);
	\filldraw[fill=white, draw=black,thick] (0:0.4cm) circle(1.7pt);
		\draw (0:0.6cm) node[above=-9pt]{$\hspace{-0.2cm}\phantom{l}^{1}$};
	\filldraw[fill=white, draw=black,thick] (-40:0.4cm) circle(1.7pt);
		\draw (-40:0.6cm) node[above=-9pt]{$\hspace{-0.1cm}\phantom{l}^{2}$};
	\filldraw[fill=white, draw=black,thick] (-90:0.4cm) circle(1.7pt);
	\filldraw[black] (-140:0.4cm) circle(1.7pt);
	\filldraw[black] (-180:0.4cm) circle(1.7pt);
	\filldraw[fill=white, draw=black,thick] (-225:0.4cm) circle(1.7pt);
 \end{tikzpicture}
$$
where the admissible subset $J_2=\{\begin{tikzpicture}  \filldraw[fill=white, draw=black,thick] (0,0) circle(2pt); \end{tikzpicture}_2, \begin{tikzpicture}  \filldraw[black] (0,0) circle(2pt); \end{tikzpicture}\}$ with the second white vertex as minimal element is nested inside the admissible subset $J_1=\{\begin{tikzpicture}  \filldraw[fill=white, draw=black,thick] (0,0) circle(2pt); \end{tikzpicture}_1, \begin{tikzpicture}  \filldraw[fill=white, draw=black,thick] (0,0) circle(2pt); \end{tikzpicture}_2, \begin{tikzpicture}  \filldraw[fill=white, draw=black,thick] (0,0) circle(2pt); \end{tikzpicture}, \begin{tikzpicture}  \filldraw[black] (0,0) circle(2pt); \end{tikzpicture}\}$.
The following rct
$$
\begin{tikzpicture}
 \draw[thick] (0cm,0cm) circle(0.4cm);
                	\filldraw[black] (90:0.4cm) (0.08,.48)rectangle(-.08,.34);
                			\draw (75:0.60cm) node[above=-9pt]{$\hspace{-0.2cm}\phantom{l}^{i}$};
	 \draw[thick=black] (0:0.4cm) to [bend left=-40]  (-90:0.4cm);
	  \draw[thick=black] (-90:0.4cm) to [bend left=10]  (-225:0.4cm);
	    \draw[thick=black] (-225:0.4cm) to [bend left=-3]  (0:0.4cm);
	    \draw[thick=black] (-0:0.4cm) to [bend left=20]  (-0:0.4cm);
	 \draw[thick=black] (-40:0.4cm) to [bend left=-20]  (-180:0.4cm);
	 \draw[thick=black] (-40:0.4cm) to [bend left=15]  (-180:0.4cm);
	\filldraw[black] (45:0.4cm) circle(1.7pt);
	\filldraw[fill=white, draw=black,thick] (0:0.4cm) circle(1.7pt);
		\draw (0:0.6cm) node[above=-9pt]{$\hspace{-0.2cm}\phantom{l}^{1}$};
	\filldraw[fill=white, draw=black,thick] (-40:0.4cm) circle(1.7pt);
		\draw (-40:0.6cm) node[above=-9pt]{$\hspace{-0.1cm}\phantom{l}^{2}$};
	\filldraw[fill=white, draw=black,thick] (-90:0.4cm) circle(1.7pt);
	\filldraw[black] (-140:0.4cm) circle(1.7pt);
	\filldraw[black] (-180:0.4cm) circle(1.7pt);
	\filldraw[fill=white, draw=black,thick] (-225:0.4cm) circle(1.7pt);
 \end{tikzpicture}
 $$
illustrates the case of disjoint admissible subsets $J_1=\{\begin{tikzpicture}  \filldraw[fill=white, draw=black,thick] (0,0) circle(2pt); \end{tikzpicture}_1, \begin{tikzpicture}  \filldraw[fill=white, draw=black,thick] (0,0) circle(2pt); \end{tikzpicture}, \begin{tikzpicture}  \filldraw[fill=white, draw=black,thick] (0,0) circle(2pt); \end{tikzpicture}\}$
and
$J_2=\{\begin{tikzpicture}  \filldraw[fill=white, draw=black,thick] (0,0) circle(2pt); \end{tikzpicture}_2, \begin{tikzpicture}  \filldraw[black] (0,0) circle(2pt); \end{tikzpicture}\}$.
The {\it{overlapping}} case, i.e., $J_k \cap J_l \neq \emptyset$, is represented by the example
$$
\begin{tikzpicture}
 \draw[thick] (0cm,0cm) circle(0.4cm);
                	\filldraw[black] (90:0.4cm) (0.08,.48)rectangle(-.08,.34);
                			\draw (75:0.60cm) node[above=-9pt]{$\hspace{-0.2cm}\phantom{l}^{i}$};
	 \draw[thick=black] (0:0.4cm) to [bend left=-40]  (-90:0.4cm);
	  \draw[thick=black] (-90:0.4cm) to [bend left=1]  (-180:0.4cm);
	    \draw[thick=black] (-180:0.4cm) to [bend left=-100]  (-225:0.4cm);
	    \draw[thick=black] (-225:0.4cm) to [bend left=-3]  (0:0.4cm);
	    \draw[thick=black] (-0:0.4cm) to [bend left=20]  (-0:0.4cm);
	 \draw[thick=black] (-40:0.4cm) to [bend left=-20]  (-180:0.4cm);
	 \draw[thick=black] (-40:0.4cm) to [bend left=15]  (-180:0.4cm);
	\filldraw[black] (45:0.4cm) circle(1.7pt);
	\filldraw[fill=white, draw=black,thick] (0:0.4cm) circle(1.7pt);
		\draw (0:0.6cm) node[above=-9pt]{$\hspace{-0.2cm}\phantom{l}^{1}$};
	\filldraw[fill=white, draw=black,thick] (-40:0.4cm) circle(1.7pt);
		\draw (-40:0.6cm) node[above=-9pt]{$\hspace{-0.1cm}\phantom{l}^{2}$};
	\filldraw[fill=white, draw=black,thick] (-90:0.4cm) circle(1.7pt);
	\filldraw[black] (-140:0.4cm) circle(1.7pt);
	\filldraw[black] (-180:0.4cm) circle(1.7pt);
	\filldraw[fill=white, draw=black,thick] (-225:0.4cm) circle(1.7pt);
 \end{tikzpicture}
$$
where $J_1=\{\begin{tikzpicture}  \filldraw[fill=white, draw=black,thick] (0,0) circle(2pt); \end{tikzpicture}_1, \begin{tikzpicture}  \filldraw[fill=white, draw=black,thick] (0,0) circle(2pt); \end{tikzpicture}, \begin{tikzpicture}  \filldraw[black] (0,0) circle(2pt); \end{tikzpicture}, \begin{tikzpicture}  \filldraw[fill=white, draw=black,thick] (0,0) circle(2pt); \end{tikzpicture}\}$
and
 $J_2=\{\begin{tikzpicture}  \filldraw[fill=white, draw=black,thick] (0,0) circle(2pt); \end{tikzpicture}_2, \begin{tikzpicture}  \filldraw[black] (0,0) circle(2pt); \end{tikzpicture}\}$
share one black vertex.

\item \label{rmk:2}
The combinatorics described here are similar in flavor to those of Feynman graphs in physics. Those graphs can have subgraphs, and they can be disjoint, nested or overlapping. Feynman graphs appear in perturbative quantum field theory, and their combinatorics are crucial in the context of the solution of the so-called renormalization problem \cite{Collins_84}. The analog of an admissible extraction is called a {\it{spinney}} in the context of Feynman graphs. The analog of the set of all admissible extractions is called a {\it{wood}}, which contains all spinneys \cite{Caswell-Kennedy_82,Smirnov_91}.
\end{enumerate}
\end{rem}

Observe that for any admissible subset $J_{l}$ the weight $|c^i|= |c^i_{n_l}| + |c_{|J_{l}}^{n_l}| -1$. Regarding the degrees of $c^i_{n_l}$ and $c_{|J_{l}}^{n_l}$, it follows that $\deg(c^i_{n_l}) = |c^i| - 2 |J_l|_{\begin{tikzpicture}  \filldraw[fill=white, draw=black,thick] (0,0) circle(1.3pt); \end{tikzpicture}} - |J_l|_{\begin{tikzpicture}  \filldraw[black] (0,0) circle(1.3pt); \end{tikzpicture}} + 1$, where $|J_{l}|_{\begin{tikzpicture}  \filldraw[fill=white, draw=black,thick] (0,0) circle(1.3pt); \end{tikzpicture}}$ and $ |J_{l}|_{\begin{tikzpicture}  \filldraw[black] (0,0) circle(1.3pt); \end{tikzpicture}}$ denote the number of white and black vertices in $J_{l}$, respectively. For the extracted rct
$$
	\deg(c_{|J_{l}}^{n_l}) = 2 |J_{l}|_{\begin{tikzpicture} \filldraw[fill=white, draw=black,thick] (0,0) circle(1.3pt); \end{tikzpicture}}
			+ |J_{l}|_{\begin{tikzpicture} \filldraw[black] (0,0) circle(1.3pt);\end{tikzpicture}}
			-1
$$
such  that
\begin{equation}
\label{degreepreserving}
	\deg(c^i) =  \deg(c^i_{n_l}) + \deg(c_{|J_{l}}^{n_l}).
\end{equation}

Consider next the rct $c_{|J_{l}}^{n_l}$ corresponding to the admissible subset $J_{l}$ as a sub-rct in $c^i$. Since an admissible extraction $\mathcal{J}$ consists of strictly disjoint admissible subsets, one may extract several sub-rcts from a rct $c^i$. Hence, to any admissible extraction $\mathcal{J}:=\{J_{l_1},\ldots,J_{l_k}\}$, $l_1 < \cdots < l_k$, there corresponds a unique rct
$$
	c^i_{n_{l_1} \cdots n_{l_k}}:=(c^i /  J_{l_1} \cup \cdots \cup J_{l_k})_{n_{l_1}\cdots n_{l_k}}
$$
with $k$ white vertices replaced by black vertices decorated by $x_{n_{l_1}}, \ldots, x_{n_{l_k}}$ and a sequence of sub-rcts $c_{|J_{l_1}}^{n_{l_1}} , \ldots, c_{|J_{l_k}}^{n_{l_k}}$ with roots indexed by $n_{l_1} ,\ldots, n_{l_k} $. Returning to the earlier example, the admissible extraction $\mathcal{J}_7=\{J_{12},J_{21}\}$ corresponding to the rct with two sub-rcts
$$
\begin{tikzpicture}
 \draw[thick] (0cm,0cm) circle(0.4cm);
                	\filldraw[black] (90:0.4cm) (0.08,.48)rectangle(-.08,.34);
                			\draw (75:0.60cm) node[above=-9pt]{$\hspace{-0.2cm}\phantom{l}^{i}$};
	\draw[thick=black] (-200:0.4cm) arc (-200:180:0.1cm);
	 \draw[thick=black] (-50:0.4cm) to [bend left=-40]  (-130:0.4cm);
	 \draw[thick=black] (-50:0.4cm) to [bend left=-5]  (-130:0.4cm);
	\filldraw[black] (25:0.4cm) circle(1.7pt);
	\filldraw[fill=white, draw=black,thick] (-50:0.4cm) circle(1.7pt);
	\filldraw[black] (-130:0.4cm) circle(1.7pt);
	\filldraw[fill=white, draw=black,thick] (-200:0.4cm) circle(1.7pt);
 \end{tikzpicture}
$$
gives
$$
c^i_{n_1n_2} =\hspace{-0.4cm} \raisebox{-20pt}{
{\begin{tikzpicture}{
 \draw[thick] (0cm,0cm) circle(0.4cm);
                	\filldraw[black] (90:0.4cm) (0.08,.48)rectangle(-.08,.34);
                			\draw (75:0.60cm) node[above=-9pt]{$\hspace{-0.2cm}\phantom{l}^{i}$};
	\filldraw[black] (0:0.4cm) circle(1.7pt);
	\filldraw[black] (-90:0.4cm) circle(1.7pt);
			\draw(-60:0.5cm) node[above=-14pt]{$\hspace{-0.2cm}\phantom{l}^{n_1}$};
	\filldraw[black] (-180:0.4cm) circle(1.7pt);
			\draw(-198:0.8cm) node[above=-14pt]{$\hspace{0.1cm}\phantom{l}^{n_2}$};
 }\end{tikzpicture}} }
$$
and
{$$
c_{|J_{12}}^{n_1}= \raisebox{-10pt}{
\begin{tikzpicture}
 \draw[thick] (0cm,0cm) circle(0.4cm);
                	\filldraw[black] (90:0.4cm) (0.08,.48)rectangle(-.08,.34);
                			\draw (75:0.60cm) node[above=-9pt]{$\hspace{-0.0cm}\phantom{l}^{n_1}$};
	\filldraw[black] (-90:0.4cm) circle(1.7pt);
 \end{tikzpicture}}
\qquad
c_{|J_{21}}^{n_2}=\raisebox{-10pt}{
\begin{tikzpicture}{
 \draw[thick] (0cm,0cm) circle(0.4cm);
                	\filldraw[black] (90:0.4cm) (0.08,.48)rectangle(-.08,.34);
                			\draw (75:0.60cm) node[above=-9pt]{$\hspace{-0.0cm}\phantom{l}^{n_2}$};
 }\end{tikzpicture}}
$$}

The linear span of bicolored rooted circle trees $\mathcal{C}_f^{\circ}$ is extended to a commutative $k$-algebra $\mathcal{H}^\circ_f$ by using the disjoint union as a product, i.e., $m_{\mathcal{H}^\circ_\mathcal{C}}({c'}^i,{c''}^j)=:{c'}^i{c''}^j$. This makes  $\mathcal{H}^\circ_f$ a polynomial algebra with $C_f^{\circ}$ as a generating set. A coproduct is defined on $\mathcal{H}_f^{\circ}$ in terms of admissible extractions. Let $c^i$ be a bicolored rct, the map $\Delta: \mathcal{C}_f^{\circ} \to \mathcal{C}_f^{\circ} \otimes \mathcal{H}_f^{\circ}$ is defined by summing over all admissible extractions on $c^i$, namely,
\begin{equation}
\label{coprod}
	\Delta(c^i) := \sum_{\mathcal{J} \in {\mathcal{W}}(c^i)
					\atop \mathcal{J} :=\{J_{l_1},\ldots, J_{l_k}\}}
				c^i_{n_{l_1}\cdots n_{l_k}}
				\otimes c_{|J_{l_1}}^{n_{l_1}} \cdots c_{|J_{l_k}}^{n_{l_k}}.
\end{equation}
The coproduct is extended multiplicatively to $\mathcal{H}_f^{\circ}$. The sum on the right-hand side is over all admissible extractions of the rct $c^i$, including the empty set and the full rct. Note that it includes $k$ {\it{internal}} summations, i.e., $1 \leq n_{l_1},\cdots, n_{l_k} \leq m$. In other words, {\it Einstein's summation convention} applies so that pairs of indices are implying a summation from $1$ to $m$. On the left-hand side of the tensor product there is always a single bicolored rct with root indexed by $i$, where several internal vertices have been eliminated, and $k$ internal black vertices are indexed by $n_{l_1},\ldots, n_{l_k}$. The latter correspond to the minimal elements of the admissible subsets $J_{l_1},\ldots, J_{l_k}$, i.e., the white vertices enumerated $l_1 < \cdots <l_k$ that have been turned into black vertices. On the right-hand side of the tensor product there is a monomial of $k$ sub-rcts with roots indexed by $n_{l_1},\ldots, n_{l_k}$. Examples are in order to make this definition more transparent. First, note that the rct with no internal vertices is primitive, since there are no proper admissible subsets. That is,
$$
	\Delta \Big(\!\!\raisebox{-10pt}{\et}\Big) \; = \; \raisebox{-10pt}{\et} \; \otimes \; {\mathbf{1}}  \;
	+ \; {\mathbf{1}}  \; \otimes \raisebox{-10pt}{\et}
$$
Similarly, the rcts with only black internal vertices also has no proper admissible subsets, therefore,
$$
	\Delta \Big(\!\!\raisebox{-10pt}{
 \begin{tikzpicture}
 \draw[thick] (0cm,0cm) circle(0.4cm);
                \filldraw[black] (90:0.4cm) (0.08,.48)rectangle(-.08,.34);
                                \draw (75:0.60cm) node[above=-9pt]{$\hspace{-0.2cm}\phantom{l}^{i}$};
	 \filldraw[black] (45:0.33cm) circle(0.2pt);
	 \filldraw[black] (35:0.33cm) circle(0.2pt);
	 \filldraw[black] (25:0.33cm) circle(0.2pt);
	 \filldraw[black] (0:0.4cm) circle(1.7pt);
	
 	 \filldraw[black] (-25:0.33cm) circle(0.2pt);
	 \filldraw[black] (-35:0.33cm) circle(0.2pt);
	 \filldraw[black] (-45:0.33cm) circle(0.2pt);			
	 \filldraw[black] (-75:0.4cm) circle(1.7pt);
	 		
	 \filldraw[black] (-105:0.33cm) circle(0.3pt);
	 \filldraw[black] (-115:0.33cm) circle(0.3pt);
	 \filldraw[black] (-125:0.33cm) circle(0.3pt);
	 \filldraw[black] (-150:0.4cm) circle(1.7pt);
	 \filldraw[black] (-180:0.33cm) circle(0.3pt);
	 \filldraw[black] (-190:0.33cm) circle(0.3pt);
	 \filldraw[black] (-200:0.33cm) circle(0.3pt);
	 		
 \end{tikzpicture}}\Big) \; = \; \raisebox{-10pt}{ \begin{tikzpicture}
 \draw[thick] (0cm,0cm) circle(0.4cm);
                \filldraw[black] (90:0.4cm) (0.08,.48)rectangle(-.08,.34);
                                \draw (75:0.60cm) node[above=-9pt]{$\hspace{-0.2cm}\phantom{l}^{i}$};
	 \filldraw[black] (45:0.33cm) circle(0.2pt);
	 \filldraw[black] (35:0.33cm) circle(0.2pt);
	 \filldraw[black] (25:0.33cm) circle(0.2pt);
	 \filldraw[black] (0:0.4cm) circle(1.7pt);
	
 	 \filldraw[black] (-25:0.33cm) circle(0.2pt);
	 \filldraw[black] (-35:0.33cm) circle(0.2pt);
	 \filldraw[black] (-45:0.33cm) circle(0.2pt);			
	 \filldraw[black] (-75:0.4cm) circle(1.7pt);
	 		
	 \filldraw[black] (-105:0.33cm) circle(0.3pt);
	 \filldraw[black] (-115:0.33cm) circle(0.3pt);
	 \filldraw[black] (-125:0.33cm) circle(0.3pt);
	 \filldraw[black] (-150:0.4cm) circle(1.7pt);
	 \filldraw[black] (-180:0.33cm) circle(0.3pt);
	 \filldraw[black] (-190:0.33cm) circle(0.3pt);
	 \filldraw[black] (-200:0.33cm) circle(0.3pt);
	 		
 \end{tikzpicture}} \; \otimes \; {\mathbf{1}}  \;
	+ \; {\mathbf{1}}  \; \otimes \raisebox{-10pt}{ \begin{tikzpicture}
 \draw[thick] (0cm,0cm) circle(0.4cm);
                \filldraw[black] (90:0.4cm) (0.08,.48)rectangle(-.08,.34);
                                \draw (75:0.60cm) node[above=-9pt]{$\hspace{-0.2cm}\phantom{l}^{i}$};
	 \filldraw[black] (45:0.33cm) circle(0.2pt);
	 \filldraw[black] (35:0.33cm) circle(0.2pt);
	 \filldraw[black] (25:0.33cm) circle(0.2pt);
	 \filldraw[black] (0:0.4cm) circle(1.7pt);
	
 	 \filldraw[black] (-25:0.33cm) circle(0.2pt);
	 \filldraw[black] (-35:0.33cm) circle(0.2pt);
	 \filldraw[black] (-45:0.33cm) circle(0.2pt);			
	 \filldraw[black] (-75:0.4cm) circle(1.7pt);
	 		
	 \filldraw[black] (-105:0.33cm) circle(0.3pt);
	 \filldraw[black] (-115:0.33cm) circle(0.3pt);
	 \filldraw[black] (-125:0.33cm) circle(0.3pt);
	 \filldraw[black] (-150:0.4cm) circle(1.7pt);
	 \filldraw[black] (-180:0.33cm) circle(0.3pt);
	 \filldraw[black] (-190:0.33cm) circle(0.3pt);
	 \filldraw[black] (-200:0.33cm) circle(0.3pt);
 \end{tikzpicture}}
$$

\vspace{0.3cm}

\noindent White vertices imply admissible subsets. Consider the following examples:
$$
	\Delta \Big(\!\!\raisebox{-10pt}{\rt}\Big) \; = \; \raisebox{-10pt}{\rt} \; \otimes \; {\mathbf{1}}  \;
	+ \; {\mathbf{1}}  \; \otimes \raisebox{-10pt}{\rt} \;
	+ \; \raisebox{-20pt}{\btn} \; \otimes \raisebox{-7pt}{\etn}
$$
$$
	\Delta \Big(\!\!\raisebox{-10pt}{\brt}\Big) \; = \; \raisebox{-10pt}{\brt} \; \otimes \; {\mathbf{1}}  \;
	+ \; {\mathbf{1}}  \; \otimes \raisebox{-10pt}{\brt} \;
	+ \; \raisebox{-19pt}{\bbnt}\; \otimes\!  \raisebox{-9pt}{\etn}
$$
$$
	\Delta \Big(\!\!\raisebox{-10pt}{\rbt}\Big) \; = \; \raisebox{-10pt}{\rbt} \; \otimes \; {\mathbf{1}}  \;
	+ \; {\mathbf{1}}  \; \otimes \raisebox{-10pt}{\rbt} \;
	+ \; \raisebox{-21pt}{\btn} \; \otimes  \raisebox{-10pt}{\nbt}\;
	+ \; \raisebox{-19pt}{\bnbt}\!\! \otimes  \raisebox{-9pt}{\etn}
$$
\allowdisplaybreaks{
\begin{eqnarray*}
	\Delta \Big(\!\!\raisebox{-10pt}{\rrt}\Big) &=& \raisebox{-10pt}{\rrt} \; \otimes \; {\mathbf{1}}  \;
	+ \; {\mathbf{1}}  \; \otimes \raisebox{-10pt}{\rrt} \;
	+ \; \raisebox{-21pt}{\btn} \; \otimes  \raisebox{-10pt}{\rtn}\;
	+ \; \raisebox{-19pt}{\bnrt} \!\!\otimes  \raisebox{-9pt}{\etn}\\
	& &   + \; \raisebox{-19pt}{\rbnt} \otimes  \raisebox{-9pt}{\etk}
		+  \; \raisebox{-19pt}{\bkbnt} \otimes  \raisebox{-9pt}{\etn}\raisebox{-9pt}{\etk}
\end{eqnarray*}}
\allowdisplaybreaks{
\begin{eqnarray*}
	\Delta \Big(\!\!\raisebox{-10pt}{\brbrt}\Big) &=& \raisebox{-10pt}{\brbrt} \; \otimes \; {\mathbf{1}}  \;
	+ \; {\mathbf{1}}  \; \otimes \raisebox{-10pt}{\brbrt} \;
	+ \; \raisebox{-20pt}{\bbnbrt}\! \otimes  \raisebox{-10pt}{\etn}
		+ \!\!\! \raisebox{-10pt}{\brbbnt} \ \otimes  \raisebox{-9pt}{\etk}\\
	& & + \;\raisebox{-19pt}{\bbnrt} \; \otimes  \raisebox{-9pt}{\nbt}
	+ \; \raisebox{-21pt}{\bbnbt} \; \otimes  \raisebox{-10pt}{\rtn}
		 + \!\! \raisebox{-21pt}{\bbnt} \; \otimes  \raisebox{-10pt}{\brtn}\\
	& & +  \!\! \raisebox{-18pt}{\bbkbbnt}\!\! \otimes  \raisebox{-9pt}{\etn}\raisebox{-9pt}{\etk}	
		+ \!\! \raisebox{-21pt}{\bbnbkt} \; \otimes  \raisebox{-10pt}{\nbt}\raisebox{-10pt}{\etk}
\end{eqnarray*}}
Regarding the last example, the reader is referred to (\ref{ex:polygons}).

Note that from \eqref{degreepreserving} and the construction of $\Delta(c^i)$ each term in the sum on the right-hand side of \eqref{coprod} preserves the grading, that is, for any admissible extraction $\mathcal{J} :=\{J_{l_1},\ldots, J_{l_k}\} \in \mathcal{W}(c^i)$ one has
\begin{equation}
\label{degreepreserving2}
	 \deg(c^i) = \deg(c^i_{n_{l_1}\cdots n_{l_k}}) + \deg(c_{|J_{l_1}}^{n_{l_1}}) 
	 					+ \cdots + \deg(c_{|J_{l_k}}^{n_{l_k}}).
\end{equation}

\begin{thm}\label{thm:HA}
$\mathcal{H}_f^{\circ}$ is a connected graded commutative unital Hopf algebra.
\end{thm}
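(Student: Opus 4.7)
The plan is to verify that $\mathcal{H}_f^{\circ}$ is a connected graded commutative bialgebra, and then invoke the standard fact that every connected graded bialgebra admits a unique antipode making it a Hopf algebra. The commutative, associative, unital algebra structure on the polynomial algebra generated by $C_f^{\circ}$ is built in. The counit $\varepsilon$ sends $\mathbf{1}$ to $1$ and every nonempty generator to zero; this is an algebra morphism and, on each generator $c^i$, the only contributions to $(\varepsilon \otimes \id)\Delta(c^i)$ and $(\id \otimes \varepsilon)\Delta(c^i)$ come from the empty admissible extraction (which yields $c^i \otimes \mathbf{1}$) and the total extraction (which yields $\mathbf{1} \otimes c^i$), recovering $c^i$ in both cases.

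Next I would verify that $\mathcal{H}_f^{\circ}$ is graded and connected. Since $\Delta$ is extended multiplicatively from its definition on generators, the bialgebra compatibility $\Delta \circ m = (m \otimes m) \circ (\id \otimes \tau \otimes \id) \circ (\Delta \otimes \Delta)$ holds by construction. The grading by $\deg$ is multiplicative on products (definition of the degree of a monomial), and equation \eqref{degreepreserving2} shows that for each admissible extraction $\mathcal{J} \in \mathcal{W}(c^i)$ the summand contributed to $\Delta(c^i)$ lies in the appropriate graded piece. The degree-zero component is spanned by $\mathbf{1}$ (no rct of positive weight has degree zero, since $\deg(c^i) \ge 1$ for every generator), so $\mathcal{H}_f^{\circ}$ is connected.

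The heart of the argument is coassociativity: $(\Delta \otimes \id) \circ \Delta = (\id \otimes \Delta) \circ \Delta$ on each generator $c^i$. Both iterations index triples of the form (residual rct, first layer of extracted sub-rcts, second layer of further extractions), and the task is to produce a canonical bijection between the two ways of nesting. Unpacking the left-hand side, one first chooses an admissible extraction $\mathcal{J} = \{J_{l_1}, \ldots, J_{l_k}\} \in \mathcal{W}(c^i)$, and then an admissible extraction $\mathcal{K} \in \mathcal{W}(c^i_{n_{l_1} \cdots n_{l_k}})$ of the residual rct. The right-hand side fixes an admissible extraction $\mathcal{J}' \in \mathcal{W}(c^i)$ and then, for each extracted sub-rct $c^{n_{l_j}}_{|J_{l_j}'}$, an admissible extraction $\mathcal{K}_j' \in \mathcal{W}(c^{n_{l_j}}_{|J_{l_j}'})$. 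To produce the bijection, I would describe both sides as a single layered datum: a finite collection of admissible subsets of $c^i$ together with a two-level nesting decoration (``outer'' vs.~``inner''), where any two admissible subsets at the same level are pairwise disjoint and any inner one is contained in a unique outer one. The minimality/ordering conventions for white vertices ensure that distributing this data either as (outer $\cup$ inner, residue) or as (outer, inner inside each extracted piece) yields the two sides of the coassociativity identity. The decoration of the new roots by $x_{n_{l_j}}$ together with the Einstein summation convention ensures that internal sums match after relabeling.

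Once coassociativity is in hand, $\mathcal{H}_f^{\circ}$ is a connected graded commutative bialgebra. The antipode $S$ is then defined inductively on the grading by $S(\mathbf{1}) = \mathbf{1}$ and, for $x \in (\mathcal{H}_f^\circ)^{(n)}$ with $n \ge 1$ and reduced coproduct $\Delta'(x) = \sum x' \otimes x''$, by $S(x) = -x - \sum S(x') x''$; this is well-defined by induction since all terms on the right have strictly lower degree, and it satisfies $S \star \id = \id \star S = e \circ \varepsilon$ by the standard argument. The main obstacle in this proof plan is the coassociativity verification: it hinges on a careful combinatorial accounting of how admissible subsets of extracted sub-rcts correspond to admissible subsets of the ambient rct, and on checking that the root decorations and the ordering of white vertices are compatible with the two orders of iteration.
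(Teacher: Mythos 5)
Your proposal is correct and follows essentially the same route as the paper: verify the grading/connectedness from \eqref{degreepreserving2}, reduce everything to coassociativity on generators, and prove that by a bijection between two-level nestings of admissible extractions (the paper phrases this via the quotient identity $(c^i/\tau)/(\tau'/\tau)=c^i/\tau'$ following the Feynman-graph argument of \cite{GBFV}), then invoke the standard fact that a connected graded bialgebra is a Hopf algebra. The only differences are cosmetic elaborations (explicit counit check, explicit antipode recursion) that the paper leaves implicit.
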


\begin{proof}
The grading of $\mathcal{H}_f^{\circ} = \bigoplus_{n \ge 0} \mathcal{H}_n^{\circ} $ is given in terms of the degree of rcts, namely, $\deg({c'}^i{c''}^j)= \deg({c'}^i) + \deg({c''}^j)$. From \eqref{degreepreserving2} it follows that the coproduct is compatible with the grading, i.e., $\Delta(c^i) \subset \oplus_{p+q=\deg(c^i)}  \mathcal{H}_p^{\circ} \otimes  \mathcal{H}_q^{\circ}$. Next coassociativity is shown, i.e., $(\Delta \otimes \id)\circ\Delta = (\id \otimes \Delta)\circ\Delta$. Some notation is introduced to make the argument more transparent. Suppressing the internal summations, the coproduct is written as
$$
	\Delta(c^i) := \sum_{\tau \subset c^i} (c^i / \tau) \otimes \tau.
$$
Here $\tau:= c_{|J_{l_1}}^{n_{l_1}} \cdots c_{|J_{l_k}}^{n_{l_k}}$ denotes the set of completely disjoint sub-rcts corresponding to an admissible extraction $\mathcal{J} =\{J_{l_1},\ldots, J_{l_k}\} \in \mathcal{W}(c^i)$, and $c_{|J_{l_p}}^{n_{l_p}} \subset c^i$ is the sub-rct corresponding to the admissible subset $J_{l_p}$ of $c^i$. The sum is taken over the set of sub-rct corresponding to $\mathcal{J} =\{J_{l_1},\ldots, J_{l_k}\} $. Extracting the set of sub-rcts $\tau$ is denoted by $ c^i / \tau :=c^i_{n_{l_1}\cdots n_{l_k}}$. This notation follows the one used in the context of Hopf algebras of rooted trees or Feynman graphs \cite{GBFV}. Any sub-rct of $c^i$ may have itself sub-rcts. Indeed, for $s>t$ and admissible subsets $J_s \subset J_t$, the two sub-rcts $c_{|J_{s}}^{n_s}$ and $c_{|J_{t}}^{n_t}$ of a rct $c^i$ for which $c_{|J_{s}}^{n_s}$ is a sub-rct of $c_{|J_{l}}^{n_t}$ yield
\begin{equation}
\label{nesting}
	(c^i / c_{|J_{s}}^{n_s})/(c_{|J_{t}}^{n_t} / c_{|J_{s}}^{n_s}) = c^i / c_{|J_{t}}^{n_t}.  		
\end{equation}
In this notation, the identity to be verified is the equality between
$$
	 (\id \otimes \Delta)\circ\Delta(c^i) = \sum_{\tau \subset \tau' \subset c^i} (c^i / \tau') \otimes (\tau' / \tau) \otimes \tau
$$
and
$$
	 (\Delta \otimes \id)\circ\Delta(c^i) = \sum_{\tau \subset c^i \atop \tau'' \subset (c^i/\tau) } ((c^i / \tau)/\tau'') \otimes \tau'' \otimes \tau.
$$
Hence, what needs to be shown is that for each $\tau \subset c^i$
$$
	\sum_{\tau \subset \tau' \subset c^i} (c^i / \tau') \otimes \tau'/ \tau
	= \sum_{ \tau'' \subset (c^i/\tau) } ((c^i / \tau)/\tau'') \otimes \tau''.
$$
The argument given here follows \cite{GBFV}. Let $\tau \subset c^i$ such that $\tau \subset \tau' \subset c^i$. Then $\tau' / \tau \subset c^i / \tau$. On the other hand, for $\tau'' \subset c^i/\tau$ there corresponds a $\tau'$ such that $\tau \subset \tau ' \subset c^i$ and $\tau' /\tau = \tau''$. Equality follows from (\ref{nesting}).
\end{proof}

Below another (indirect) proof will be presented by showing that the Hopf algebra $\mathcal{H}_X^{\circ}$ of decorated rcts is isomorphic to the output feedback Hopf algebra.

The reduced coproduct $\Delta'$ is defined as
$$
	\Delta'(c^i):= \Delta(c^i) - c^i \otimes {\mathbf{1}} - {\mathbf{1}} \otimes c^i
			=: \sum_{\mathcal{J} \in {\mathcal{W}'}(c^i)
					\atop \mathcal{J} :=\{J_{l_1},\ldots, J_{l_k}\}}
				c^i_{n_{l_1}\cdots n_{l_k}} \otimes c_{|J_{l_1}}^{n_{l_1}}\cdots c_{|J_{l_k}}^{n_{l_k}}.
$$
Recall that $\mathcal{W}'(c^i)$ is the set of proper admissible extractions, i.e., the empty admissible set $J_0:=\emptyset$ as well as the total admissible subset $J_i:=\{V(c^i)\}$ are excluded.
$\mathcal{H}_f^{\circ}$ is a {\sl{connected graded bialgebra}\/}, and therefore a Hopf algebra. The antipode $S: \mathcal{H}_f^{\circ} \to \mathcal{H}_f^{\circ}$ is given in terms of the following recursive formulas starting with $S(\mathbf{1})=\mathbf{1}$ and
\allowdisplaybreaks{
\begin{eqnarray}
	S(c^i) &=& - c^i - \sum_{\mathcal{J} \in {\mathcal{W}'}(c^i)
					\atop \mathcal{J} :=\{J_{l_1},\ldots, J_{l_k}\}}
				S(c^i_{n_{l_1}\cdots n_{l_k}}) c_{|J_{l_1}}^{n_{l_1}}\cdots c_{|J_{l_k}}^{n_{l_k}}		\label{antipod1} \\
		&=&- c^i - \sum_{\mathcal{J} \in {\mathcal{W}'}(c^i)
					\atop \mathcal{J} :=\{J_{l_1},\ldots, J_{l_k}\}}
				c^i_{n_{l_1}\cdots n_{l_k}} S(c_{|J_{l_1}}^{n_{l_1}})\cdots S(c_{|J_{l_k}}^{n_{l_k}}). \label{antipod2}		
\end{eqnarray}}
A few examples are in order:
$$
	S \Big(\!\!\raisebox{-10pt}{\et}\Big) \; = - \raisebox{-10pt}{\et}
	\qquad
	S\Big(\!\!\raisebox{-10pt}{\bbbt}\Big) \; = - \raisebox{-10pt}{\bbbt}
$$
$$
	S \Big(\!\!\raisebox{-10pt}{\rt}\Big) \; = - \raisebox{-10pt}{\rt} 	
	+\!\! \raisebox{-20pt}{\btn} \raisebox{-7pt}{\etn}
\qquad
	S \Big(\!\!\raisebox{-10pt}{\brt}\Big) \; = - \raisebox{-10pt}{\brt} \;
	+\!\!  \raisebox{-21pt}{\bbnt}\;  \raisebox{-9pt}{\etn}
$$
$$
	S \Big(\!\!\raisebox{-10pt}{\rbt}\Big) \; = - \raisebox{-10pt}{\rbt}
	+ \raisebox{-21pt}{\btn}   \raisebox{-10pt}{\nbt}\;
	+ \raisebox{-19pt}{\bnbt}\!\! \raisebox{-9pt}{\etn}
$$
\allowdisplaybreaks{
\begin{align} \label{eq:left-S-rct-example}
	S \Big(\!\!\raisebox{-10pt}{\rrt}\Big) &= - \raisebox{-10pt}{\rrt} 	
	+ \raisebox{-21pt}{\btn}  \raisebox{-10pt}{\rtn}\;
	+ \raisebox{-18pt}{\bnrt}\!\!\!\!  \raisebox{-9pt}{\etn}
	+\!\! \raisebox{-19pt}{\rbnt}  \raisebox{-9pt}{\etk}  \\
	&
		- \raisebox{-19pt}{\btn}  \raisebox{-19pt}{\bktn}  \raisebox{-7pt}{\etk}
		-\!\!  \raisebox{-19pt}{\bkbnt} \!\! \raisebox{-9pt}{\etn}  \raisebox{-9pt}{\etk} \nonumber
\end{align}}%
In light of the last calculation and the cancellation appearing in it when using \eqref{antipod1}, one may wonder whether a cancellation free formula is available. The answer is yes, and it is known as Zimmermann's forest formula as discussed next \cite{GBFV}.

%%%%%%%%%%%%%%%%%%%%%%%%%%%%%%%%%%%%%%%%%

\subsection{Zimmermann forest type formula for the antipode of $\mathcal{H}_f^{\circ}$}
\label{ssect:zimmermann}

Zimmermann's celebrated forest formula first appeared in the context of the renormalization problem in perturbative quantum field theory \cite{Zimmermann_69}. It provides a closed form solution to Bogoliubov's so-called counterterm recursion for Feynman graphs \cite{Caswell-Kennedy_82, Collins_84}. The work of Connes--Kreimer unveiled the Hopf algebraic structure underlying the theory of renormalization  \cite{Connes-Kreimer_98,Connes-Kreimer_00}. As a result, a forest type formula, i.e., a closed form solution for the antipode, can be given in the context of the Butcher--Connes--Kreimer Hopf algebra of rooted trees. It is defined by extending the notion of admissible cuts on rooted trees to all cuts. See \cite{Figueroa-Gracia-Bondia_05,Manchon_08} for details. From a combinatorial point of view forest type formulas are interesting since they provide a way to calculate antipodes which is free of cancellations \cite{Einziger_10,GBFV}. In control theory applications they have the potential to significantly reduce computation times \cite{Gray-et-al_MTNS14,Gray-et-al_AUTOxx,Gray-et-al_CLCA13}.

In this section, a forest type formula for the antipode of the Hopf algebra of rooted circle trees is presented. It also provides a closed form solution for the recursions (\ref{antipod2}), which is moreover free of cancellations.  It is defined by replacing admissible extractions by all extractions. However, the situation is somewhat involved due to the internal summations over the decorations $n_{l_1}, \ldots, n_{l_k}$ associated with extracting admissible subsets from a rct in the coproduct
\begin{equation}
\label{cop}
	\Delta(c^i) = \sum_{\mathcal{J} \in {\mathcal{W}}(c^i)
					\atop \mathcal{J} :=\{J_{l_1},\ldots, J_{l_k}\}}
				c^i_{n_{l_1}\cdots n_{l_k}}
				\otimes c_{|J_{l_1}}^{n_{l_1}}\cdots c_{|J_{l_k}}^{n_{l_k}}.
\end{equation}
Recall that in the definition of an admissible extraction $\mathcal{J} \in \mathcal{W}(c^i)$ of a rct $c^i$ all admissible subsets of $\mathcal{J}$ have to be strictly disjoint. This constraint is now relaxed by allowing for extractions $\mathcal{I}=\{J_{l_1},\ldots,J_{l_k}\}$, ${l_1} < \cdots < {l_k}$, which may contain admissible subsets that are either nested, $J_{l_a} \subset J_{l_b}$ for $a>b$, or disjoint, $J_{l_a} \cap J_{l_b} = \emptyset$ for $a \neq b$. The set of {\em all} extractions of a rct $c^i$ is denoted $\mathcal{E}(c^i)$. Note that $\mathcal{W}(c^i) \subseteq \mathcal{E}(c^i)$. As an example, consider the rct
$$
 \begin{tikzpicture}
 \draw[thick] (0cm,0cm) circle(0.4cm);
                \filldraw[black] (90:0.4cm) (0.08,.48)rectangle(-.08,.34);
	                                \draw (75:0.60cm) node[above=-9pt]{$\hspace{-0.2cm}\phantom{l}^{i}$};
	  \filldraw[fill=white, draw=black,thick] (-45:0.4cm) circle(1.7pt);
	  	\draw (-45:0.6cm) node[above=-9pt]{$\hspace{-0.1cm}\phantom{l}^{1}$};
	  \filldraw[fill=white, draw=black,thick] (-135:0.4cm) circle(1.7pt);
	  	\draw (-135:0.6cm) node[above=-9pt]{$\hspace{-0.1cm}\phantom{l}^{2}$};
 \end{tikzpicture}
$$
and its admissible subsets:
$$
	J_{11}=\{\begin{tikzpicture}  \filldraw[fill=white, draw=black,thick] (0,0) circle(2pt); \end{tikzpicture}_1\},		
	J_{12}=\{\begin{tikzpicture}  \filldraw[fill=white, draw=black,thick] (0,0) circle(2pt); \end{tikzpicture}_1,
			\begin{tikzpicture}  \filldraw[fill=white, draw=black,thick] (0,0) circle(2pt); \end{tikzpicture}_2\},
	J_{21}=\{\begin{tikzpicture}  \filldraw[fill=white, draw=black,thick] (0,0) circle(2pt); \end{tikzpicture}_2\}.
$$
Then $\mathcal{E}(c^i)$ contains, among others, the extractions $\mathcal{I}_1=\{J_{12},J_{21}\}$ with $J_{21} \subset J_{12}$. Another example is the rct
$$
 \begin{tikzpicture}
 \draw[thick] (0cm,0cm) circle(0.4cm);
                	\filldraw[black] (90:0.4cm) (0.08,.48)rectangle(-.08,.34);
                			\draw (75:0.60cm) node[above=-9pt]{$\hspace{-0.2cm}\phantom{l}^{i}$};
	\filldraw[fill=white, draw=black,thick] (0:0.4cm) circle(1.7pt);
	                			\draw (0:0.6cm) node[above=-9pt]{$\hspace{-0.1cm}\phantom{l}^{1}$};
	\filldraw[black] (-90:0.4cm) circle(1.7pt);
	\filldraw[fill=white, draw=black,thick] (-180:0.4cm) circle(1.7pt);
	                			\draw (-180:0.6cm) node[above=-9pt]{$\hspace{-0.1cm}\phantom{l}^{2}$};
 \end{tikzpicture}
$$
and its admissible subsets:
$$
	J_{11}=\{\begin{tikzpicture}  \filldraw[fill=white, draw=black,thick] (0,0) circle(2pt); \end{tikzpicture}_1\},
	J_{12}=\{\begin{tikzpicture}  \filldraw[fill=white, draw=black,thick] (0,0) circle(2pt); \end{tikzpicture}_1,
		\begin{tikzpicture}  \filldraw[black] (0,0) circle(2pt); \end{tikzpicture}\},
	J_{13}=\{\begin{tikzpicture}  \filldraw[fill=white, draw=black,thick] (0,0) circle(2pt); \end{tikzpicture}_1,
		\begin{tikzpicture}  \filldraw[fill=white, draw=black,thick] (0,0) circle(2pt); \end{tikzpicture}_2\},
	J_{14}=\{\begin{tikzpicture}  \filldraw[fill=white, draw=black,thick] (0,0) circle(2pt); \end{tikzpicture}_1,
		\begin{tikzpicture}  \filldraw[black] (0,0) circle(2pt); \end{tikzpicture},
		\begin{tikzpicture}  \filldraw[fill=white, draw=black,thick] (0,0) circle(2pt); \end{tikzpicture}_2\},
	J_{21}=\{\begin{tikzpicture}  \filldraw[fill=white, draw=black,thick] (0,0) circle(2pt); \end{tikzpicture}_2\}.
$$
Then $\mathcal{E}(c^i)$ contains, among others, the extractions:
$$
	\mathcal{I}_1=\{J_{14},J_{21}\},
	\qquad
	\mathcal{I}_2=\{J_{13},J_{21}\}
$$
which are nested, i.e., $J_{21} \subset J_{14}$ and $J_{21} \subset J_{13}$. In the rct
\begin{equation}
\label{exp:rct}
 \begin{tikzpicture}
 \draw[thick] (0cm,0cm) circle(0.4cm);
                	\filldraw[black] (90:0.4cm) (0.08,.48)rectangle(-.08,.34);
                			\draw (75:0.60cm) node[above=-9pt]{$\hspace{-0.2cm}\phantom{l}^{i}$};
	\filldraw[fill=white, draw=black,thick] (0:0.4cm) circle(1.7pt);
	                			\draw (0:0.6cm) node[above=-9pt]{$\hspace{-0.1cm}\phantom{l}^{1}$};
	\filldraw[fill=white, draw=black,thick] (-90:0.4cm) circle(1.7pt);
	                			\draw (-90:0.65cm) node[above=-9pt]{$\hspace{-0.1cm}\phantom{l}^{2}$};
	\filldraw[fill=white, draw=black,thick] (-180:0.4cm) circle(1.7pt);
	                			\draw (-180:0.6cm) node[above=-9pt]{$\hspace{-0.1cm}\phantom{l}^{3}$};
 \end{tikzpicture}
\end{equation}
one finds, among others, the extractions:
\allowdisplaybreaks{
\begin{eqnarray*}
	\mathcal{I}_1&=&\{
		J_1=\{\begin{tikzpicture}  \filldraw[fill=white, draw=black,thick] (0,0) circle(2pt); \end{tikzpicture}_1,
		\begin{tikzpicture}  \filldraw[fill=white, draw=black,thick] (0,0) circle(2pt); \end{tikzpicture}_2,
		\begin{tikzpicture}  \filldraw[fill=white, draw=black,thick] (0,0) circle(2pt); \end{tikzpicture}_3\},
		J_2=\{\begin{tikzpicture} \filldraw[fill=white, draw=black,thick] (0,0) circle(2pt); \end{tikzpicture}_2,
  		\begin{tikzpicture} \filldraw[fill=white, draw=black,thick] (0,0) circle(2pt); \end{tikzpicture}_3\},
		J_3=\{\begin{tikzpicture} \filldraw[fill=white, draw=black,thick] (0,0) circle(2pt); \end{tikzpicture}_3\}
\}\\
	\mathcal{I}_2&=&\{		
		J_1=\{\begin{tikzpicture}  \filldraw[fill=white, draw=black,thick] (0,0) circle(2pt); \end{tikzpicture}_1,
		\begin{tikzpicture}  \filldraw[fill=white, draw=black,thick] (0,0) circle(2pt); \end{tikzpicture}_2,
		\begin{tikzpicture}  \filldraw[fill=white, draw=black,thick] (0,0) circle(2pt); \end{tikzpicture}_3\},
		J_2=\{\begin{tikzpicture} \filldraw[fill=white, draw=black,thick] (0,0) circle(2pt); \end{tikzpicture}_2\},
		J_3=\{\begin{tikzpicture} \filldraw[fill=white, draw=black,thick] (0,0) circle(2pt); \end{tikzpicture}_3\}
\}\\		
	\mathcal{I}_3&=&\{		
		J_1=\{\begin{tikzpicture}  \filldraw[fill=white, draw=black,thick] (0,0) circle(2pt); \end{tikzpicture}_1,
		\begin{tikzpicture}  \filldraw[fill=white, draw=black,thick] (0,0) circle(2pt); \end{tikzpicture}_3\},
		J_2=\{\begin{tikzpicture} \filldraw[fill=white, draw=black,thick] (0,0) circle(2pt); \end{tikzpicture}_2\},
		J_3=\{\begin{tikzpicture} \filldraw[fill=white, draw=black,thick] (0,0) circle(2pt); \end{tikzpicture}_3\}
\}\\
	\mathcal{I}_4&=&\{		
		J_1=\{\begin{tikzpicture}  \filldraw[fill=white, draw=black,thick] (0,0) circle(2pt); \end{tikzpicture}_1\},
		J_2=\{\begin{tikzpicture} \filldraw[fill=white, draw=black,thick] (0,0) circle(2pt); \end{tikzpicture}_2\},
		J_3=\{\begin{tikzpicture} \filldraw[fill=white, draw=black,thick] (0,0) circle(2pt); \end{tikzpicture}_3\}
\}.		
\end{eqnarray*}}%
For notational simplicity the second indices of the admissible subsets have been omitted.

Elements in an extraction $\mathcal{I}:=\{J_{l_1},\ldots, J_{l_k}\}$ are either disjoint or strictly nested. This implies a natural poset structure on each extraction, which is represented in terms of a non-planar rooted tree. The tree reflects the resulting hierarchy of admissible subsets of a rct $c^i$. Recall that by definition a non-planar {\it{rooted tree}} $t$ is made out of vertices and nonintersecting oriented edges such that all but one vertex have exactly one outgoing line and an arbitrary number of incoming lines. The particular vertex with no outgoing line is called the root of the tree. It is drawn on top of the tree, whereas the leaves are the only vertices without any incoming lines. The weight of a rooted tree $t$ is defined to be its number of vertices, $\sharp(t):=|V(t)|$. The following list contains rooted trees up to weight four:
$$
\begin{tikzpicture}  \filldraw[green] (0,0) circle(2.5pt); \end{tikzpicture} \qquad
	\arbrea \qquad \arbreb \qquad \arbrec \qquad
	\arbree \qquad \arbref \qquad \arbred \qquad \arbreg
$$
The root has been colored green. The empty rooted tree is denoted by $\mathbf{1}$. The set (resp.~space) of rooted trees is denoted by $T$ (resp.~$\mathcal{T}$). (See \cite{Manchon_08} for details.) Recall the map $B_+$ defined on rooted trees. It maps rooted trees $t_1,\ldots,t_k$  to the rooted tree $t:=B_+(t_1,\ldots,t_k)$ by connecting each of the roots of $t_1,\ldots,t_k$ via an edge to a new root. The tree with only one vertex, i.e., the root, and no edges is written $\begin{tikzpicture}  \filldraw[green] (0,0) circle(2.5pt); \end{tikzpicture} =B_+(\mathbf{1})$. The rest of the above trees can then be written $B_+(B_+(\mathbf{1}))$, $B_+(B_+(\mathbf{1}),B_+(\mathbf{1}))$, $B_+(B_+(B_+(\mathbf{1})))$, etc. Next, rooted trees are decorated in such a way that they represent the hierarchy of admissible subsets in an extraction of any rct. This hierarchy reflects the internal summations in the coproduct \eqref{cop} on rcts. Let $c^i$ be a rct and $\mathcal{I}:=\{J_{l_1},\ldots, J_{l_k}\} \in \mathcal{E}(c^i)$ an extraction. Define a map $h: T \to \{i,l_1,\ldots,l_k\}$ by decorating the root of $t \in T$ by $i$. An edge in $t$ goes out from a vertex decorated by $l_a$ to a vertex decorated by $l_b < l_a$ if and only if $J_{l_a} \subset J_{l_b}$. Regarding example (\ref{exp:rct}), the following rooted trees are associated with the extractions $\mathcal{I}_1$, $\mathcal{I}_2$, $\mathcal{I}_3, \mathcal{I}_4$,
$$
	\arbreci \qquad \arbregi \qquad \arbreei \qquad \arbredi
$$
respectively. Again, with $B^i_+(\mathbf{1}):=\begin{tikzpicture}  \filldraw[green] (0,0) circle(2.5pt); \end{tikzpicture}^i$ denoting the root with no edges and decorated by $i$, any decorated rooted tree can be written using the map $B_+^n$. It adds a new root decorated by $n$. The previous list of decorated rooted trees then becomes $t_1^i=B_+^i(B_+^1(B_+^2(B_+^3(\mathbf{1}))))$, $t_2^i=B_+^i(B_+^1(B_+^2(\mathbf{1}),B_+^3(\mathbf{1})))$, $t_3^i=B_+^i(B_+^1(B_+^3(\mathbf{1})),B_+^2(\mathbf{1}))$, and $t_4^i = B_+^i(B_+^1(\mathbf{1}),B_+^2(\mathbf{1}),B_+^3(\mathbf{1}))$. Note that rooted trees corresponding to admissible extractions  $\mathcal{J}:=\{J_{l_1},\ldots, J_{l_k}\}$ are strictly of the form $B_+^i(B_+^{l_1}(\mathbf{1}),B_+^{l_2}(\mathbf{1}),\ldots,B_+^{l_k}(\mathbf{1}))$.

Using extractions, together with decorated rooted trees to encode the hierarchical structure of extractions of a rct, the antipode $S$ of $c^i \in \mathcal{H}_f^{\circ}$ is given in the following theorem. The decorated rooted tree corresponding to an extraction $\mathcal{I}=\{J_{l_{s_1}},\ldots,J_{l_{s_b}}\} \in \mathcal{E}(c^i)$ is denoted $t^i_\mathcal{I}:=B_+^i(t^{l_{s_1}}, \cdots , t^{l_{s_b}})$.

\begin{thm} \label{thm:forestformula}
For any $c^i\in \mathcal{H}_f^\circ$ the antipode
\begin{equation}
\label{antipode3}
		S(c^i) = \sum_{\mathcal{I} \in \tilde{\mathcal{E}}(c^i) \atop \mathcal{I}:=\{J_{l_1},\ldots, J_{l_k}\}}
							  \chi(t^i_\mathcal{I}),
\end{equation}
where $t^i_\mathcal{I}$ is the decorated rooted tree corresponding to the hierarchy of admissible subsets underlying the extraction $\mathcal{I}=\{J_{l_1},\ldots, J_{l_k}\}$. The set $\tilde{\mathcal{E}}(c^i) \subset \mathcal{E}(c^i)$ must not include $J_i=V(c^i)$. The map $\chi$ is defined on the decorated rooted tree $t^i_\mathcal{I}:=B^i_+(t^{l_{s_1}}, \cdots , t^{l_{s_b}})$ as
\begin{equation}
\label{treeExtractions}
	\chi(t^i_\mathcal{I}):= - (c^i_{|J_i}/J_{l_{s_1}} \cup \cdots \cup J_{l_{s_b}})_{n_{l_{s_1}} \cdots n_{l_{s_b}}}
	\prod_{a=1}^b \chi(t^{n_{l_{s_a}}}_\mathcal{I}),
\end{equation}
and $\chi(\mathbf{1})=1$. Here $\{l_{s_1},\ldots,l_{s_b}\} \subset \{l_{1},\ldots,l_{k}\}$ and $J_{l_{s_1}}, \ldots, J_{l_{s_b}}$ are pairwise disjoint.
\end{thm}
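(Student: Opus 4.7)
The plan is to verify \eqref{antipode3} by induction on the degree $\deg(c^i)$, showing that its right-hand side satisfies the recursion \eqref{antipod2}. The base case is immediate: when $c^i$ has no white vertex, $\mathcal{W}'(c^i)=\emptyset$ and $\tilde{\mathcal{E}}(c^i)=\{\emptyset\}$, so both formulas reduce to $S(c^i)=-c^i$. More generally, the empty extraction $\mathcal{I}=\emptyset$ always contributes $\chi(B_+^i(\mathbf{1}))=-c^i$, because in \eqref{treeExtractions} the product over children is then empty and no indices are modified.

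The heart of the proof is a unique-decomposition lemma for extractions. Any non-empty $\mathcal{I}\in\tilde{\mathcal{E}}(c^i)$ can be written uniquely as $\mathcal{I}=\mathcal{J}\sqcup\mathcal{I}_1\sqcup\cdots\sqcup\mathcal{I}_b$, where $\mathcal{J}=\{J_{l_{s_1}},\ldots,J_{l_{s_b}}\}$ consists of the $\subset$-maximal elements of $\mathcal{I}$ and $\mathcal{I}_a$ gathers those elements of $\mathcal{I}$ strictly contained in $J_{l_{s_a}}$. Since any two elements of an extraction are pairwise nested or disjoint (see Remark \ref{rmk:Physics}), the maximal elements are pairwise disjoint; since $V(c^i)\notin\mathcal{I}$, each $J_{l_{s_a}}$ is a proper admissible subset, so $\mathcal{J}\in\mathcal{W}'(c^i)$. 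Each residue $\mathcal{I}_a$ is in canonical bijection with $\tilde{\mathcal{E}}(c_{|J_{l_{s_a}}}^{n_{l_{s_a}}})$: in the sub-rct the minimal vertex of $J_{l_{s_a}}$ has been promoted to the root (losing its white-vertex status, exactly as the ``distinct minimal elements'' rule requires), while the remaining vertices retain their colors and cyclic order. Under this correspondence $t^i_\mathcal{I}=B_+^i(t^{l_{s_1}}_{\mathcal{I}_1},\ldots,t^{l_{s_b}}_{\mathcal{I}_b})$, and \eqref{treeExtractions} splits into an outer factor indexed by $\mathcal{J}$ and recursive factors $\chi(t^{n_{l_{s_a}}}_{\mathcal{I}_a})$.

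Applying the inductive hypothesis to each sub-rct $c_{|J_{l_{s_a}}}^{n_{l_{s_a}}}$, whose degree is strictly smaller by \eqref{degreepreserving2}, one obtains
\allowdisplaybreaks{
\begin{align*}
\sum_{\mathcal{I}\in\tilde{\mathcal{E}}(c^i)}\chi(t^i_\mathcal{I})
&=-c^i-\sum_{\mathcal{J}\in\mathcal{W}'(c^i)}c^i_{n_{l_{s_1}}\cdots n_{l_{s_b}}}\prod_{a=1}^{b}\Bigl(\sum_{\mathcal{I}_a\in\tilde{\mathcal{E}}(c_{|J_{l_{s_a}}}^{n_{l_{s_a}}})}\chi(t^{n_{l_{s_a}}}_{\mathcal{I}_a})\Bigr)\\
&=-c^i-\sum_{\mathcal{J}\in\mathcal{W}'(c^i)}c^i_{n_{l_{s_1}}\cdots n_{l_{s_b}}}\prod_{a=1}^{b}S(c_{|J_{l_{s_a}}}^{n_{l_{s_a}}}),
\end{align*}}
which is exactly \eqref{antipod2}. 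The main obstacle lies in the bijection of step two: checking that every admissible subset strictly nested inside $J_{l_{s_a}}$ corresponds to an admissible subset of the sub-rct and conversely, that the exclusion of $V(c^i)$ in $\tilde{\mathcal{E}}$ propagates consistently to the exclusion of $V(\cdot)$ at each recursive level, and that the internal summations over the decoration indices $n_{l_{s_a}}$ line up correctly across the layers of the recursion. Cancellation-freeness then follows automatically: each term carries the uniform sign $(-1)^{\sharp(t^i_\mathcal{I})}$ and is indexed by a combinatorial datum $\mathcal{I}$ that cannot be produced in two distinct ways.
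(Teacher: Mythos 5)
Your proposal is correct and follows essentially the same route as the paper's proof: both verify that the forest sum satisfies the recursion \eqref{antipod2} by regrouping each extraction according to its $\subset$-maximal admissible subsets (a proper admissible extraction) together with the nested residues, which are identified with extractions of the corresponding sub-rcts. You merely package this as an explicit unique-decomposition lemma combined with induction on the degree, whereas the paper verifies the recursion for $S_Z$ term by term and then invokes uniqueness of the antipode; the content is the same.
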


The admissible subsets $J_i=V(c^i)$ and $J_0=\emptyset$ correspond to $c^i=c^i_{|J_i}$ and $\mathbf{1}=c^0_{|J_0}$, respectively. Note that the empty extraction $\mathcal{I}=\{J_0\}$ corresponds to the rooted tree $t^i_\mathcal{I}=B^i_+(\mathbf{1}):=\begin{tikzpicture}  \filldraw[green] (0,0) circle(2.5pt); \end{tikzpicture}^i$. Hence, for a primitive rct $c^i$
$$
	S(c^i) =\chi(t^i_{\{J_0\}}) = - (c^i_{|J_i}/J_{0}) \chi(\mathbf{1}) = -c^i.
$$
As another example, consider the rooted tree $t^i_{\mathcal{I}_3}:=B_+^i(B_+^1(B_+^3(\mathbf{1})),B_+^2(\mathbf{1}))$ corresponding to the extraction $\mathcal{I}_3=\{		
		J_1=\{\begin{tikzpicture}  \filldraw[fill=white, draw=black,thick] (0,0) circle(2pt); \end{tikzpicture}_1,
		\begin{tikzpicture}  \filldraw[fill=white, draw=black,thick] (0,0) circle(2pt); \end{tikzpicture}_3\},
		J_2=\{\begin{tikzpicture} \filldraw[fill=white, draw=black,thick] (0,0) circle(2pt); \end{tikzpicture}_2\},
		J_3=\{\begin{tikzpicture} \filldraw[fill=white, draw=black,thick] (0,0) circle(2pt); \end{tikzpicture}_3\}\}$ in example (\ref{exp:rct}). Then
\allowdisplaybreaks{
\begin{eqnarray*}	
	\chi(t^i_{\mathcal{I}_3}) &=& - (c^i_{|J_i}/J_{1} \cup J_{2})_{n_{1} n_{2}} \prod_{a=1}^2 \chi(t^{n_a}_\mathcal{I}) \\
					  &=&  (c^i_{|J_i}/J_{1} \cup J_{2})_{n_{1} n_{2}}
					  	  (c^{n_1}_{|J_1}/J_{3})_{n_{3}}\ (c^{n_1}_{|J_1})_{|J_{3}}^{n_{3}}\ c^{n_2}_{|J_2}.
\end{eqnarray*}}%
Again, this gets much easier by working diagrammatically. Indeed, observe that $\mathcal{I}_3$ corresponds to
$$
	\raisebox{-20pt}{\begin{tikzpicture}
 \draw[thick] (0cm,0cm) circle(0.4cm);
                	\filldraw[black] (90:0.4cm) (0.08,.48)rectangle(-.08,.34);
                			\draw (75:0.60cm) node[above=-9pt]{$\hspace{-0.2cm}\phantom{l}^{i}$};
			\draw[thick=black] (0:0.4cm) to [bend left=-44] (-180:0.4cm);
			\draw[thick=black] (0:0.4cm) to [bend left=44] (-180:0.4cm);
			\draw[thick=black] (-180:0.4cm) arc (-180:360:0.08cm);
			\draw[thick=black] (-90:0.4cm) arc (-90:290:0.08cm) ;
	\filldraw[fill=white, draw=black,thick] (0:0.4cm) circle(1.7pt);
	                			\draw (0:0.6cm) node[above=-9pt]{$\hspace{-0.1cm}\phantom{l}^{1}$};
	\filldraw[fill=white, draw=black,thick] (-90:0.4cm) circle(1.7pt);
	                			\draw (-90:0.65cm) node[above=-9pt]{$\hspace{-0.1cm}\phantom{l}^{2}$};
	\filldraw[fill=white, draw=black,thick] (-180:0.4cm) circle(1.7pt);
	                			\draw (-180:0.6cm) node[above=-9pt]{$\hspace{-0.1cm}\phantom{l}^{3}$};	
 \end{tikzpicture}}
\;\; \qquad\;\; \raisebox{-20pt}{\arbreei} =B_+^i(B_+^1(B_+^3(\mathbf{1})),B_+^2(\mathbf{1}))
$$
such that
$$
	\chi(t^i_{\mathcal{I}_3}) = \raisebox{-19pt}{\bkbnt} \!\!
\raisebox{-21pt}{
		\begin{tikzpicture}
 			\draw[thick] (0cm,0cm) circle(0.4cm);
                			\filldraw[black] (90:0.4cm) (0.08,.48)rectangle(-.08,.34);
                       	\draw (75:0.60cm) node[above=-9pt]{$\hspace{-0.0cm}\phantom{l}^{n_1}$};
	 			\filldraw[black] (-90:0.4cm) circle(1.7pt);
                       	\draw (72:0.64cm) node[above=-42pt]{$\hspace{-0.1cm}\phantom{l}^{n_3}$};
 		\end{tikzpicture}}
	\raisebox{-9pt}{	\begin{tikzpicture}
 				\draw[thick] (0cm,0cm) circle(0.4cm);
                				\filldraw[black] (90:0.4cm) (0.08,.48)rectangle(-.08,.34);
                			\draw (75:0.60cm) node[above=-9pt]{$\hspace{-0.0cm}\phantom{l}^{n_3}$};
 				\end{tikzpicture}}
	\raisebox{-9pt}{\etk}
$$
Note that the use of rooted trees solely reflects the need to keep track of the hierarchy of admissible subsets in an extraction, which is reflected in the extra summations in the coproduct (\ref{cop}).  The antipode of the rct \eqref{exp:rct} consists of 26 terms corresponding to all possible extractions, including those mentioned above.
\begin{proof}
For any $c^i\in \mathcal{H}_f^\circ$ define
\begin{equation}
\label{Sz}
		S_Z(c^i) = \sum_{\mathcal{I} \in \tilde{\mathcal{E}}(c^i) \atop \mathcal{I}:=\{J_{l_1},\ldots, J_{l_k}\}}
							  \chi(t^i_\mathcal{I}),
\end{equation}
where
$$
		\chi(t^i_\mathcal{I}):=  - (c^i_{|J_i}/J_{l_{s_1}} \cup \cdots \cup J_{l_{s_b}})_{n_{l_{s_1}} \cdots n_{l_{s_b}}}
		\prod_{a=1}^b \chi(t^{n_{l_{s_a}}}_\mathcal{I}).
$$
The goal is to show that the map $S_Z$ satisfies the recursion
$$
	S_Z(c^i) = - c^i - \sum_{\mathcal{J} \in {\mathcal{W}'}(c^i)
					\atop \mathcal{J} :=\{J_{l_1},\ldots, J_{l_k}\}}
				c^i_{n_{l_1}\cdots n_{l_k}}
				S_Z(c_{|J_{l_1}}^{n_{l_1}}) \cdots S_Z(c_{|J_{l_k}}^{n_{l_k}}).
$$
The uniqueness of the antipode then implies that $S=S_Z$. Recall that $\mathcal{W}'(c^i)$ is the set of proper admissible extractions of the rct $c^i$, i.e., it contains neither $J_0$ nor $J_i$. In addition, recall that the admissible extractions are a subset of all extractions, i.e., $\mathcal{W}(c^i) \subseteq {\mathcal{E}}(c^i)$. This allows one to break the sum (\ref{Sz}) into pieces such that
\begin{equation}
\label{splitting}
		S_Z(c^i) = 	- c^i_{|J_i} + \sum_{\mathcal{J} \in \mathcal{W}'(c^i) \atop \mathcal{J}:=\{J_{l_1},\ldots, J_{l_k}\}}
							  \chi(t^i_\mathcal{J})
				+ \sum_{\mathcal{I} \in {\mathcal{E}}(c^i)\backslash \mathcal{W}(c^i) \atop \mathcal{I}:=\{J_{a_1},\ldots, J_{a_s}\}}
							  \chi(t^i_\mathcal{I}).			
\end{equation}
The first term on the right-hand side corresponds to the admissible subset $J_0=\emptyset$. The first sum on the right-hand side is over strictly pairwise disjoint subsets $\mathcal{J}:=\{J_{l_1},\ldots, J_{l_k}\}$, that is, proper admissible extractions. Recall that those extractions correspond to rooted trees of the form $t^i_\mathcal{J} = B_+^i(B_+^{l_1}(\mathbf{1}),\ldots ,B_+^{l_k}(\mathbf{1}))$. Therefore,
$$
	\chi(t^i_\mathcal{J}):= - (c^i_{|J_i}/J_{l_{1}} \cup \cdots \cup J_{l_{k}})_{n_{l_{1}} \cdots n_{l_{k}}}
	\prod_{u=1}^k c_{|J_{l_{u}}}^{n_{l_{u}}}.
$$
The second sum on the right-hand side of (\ref{splitting}) is over extractions $\mathcal{I}:=\{J_{a_1},\ldots, J_{a_s}\}$, which contain at least one pair of admissible subsets, $J_{a_p}, J_{a_q}$, $a_p \neq a_q$, such that either $J_{a_p} \subset J_{a_q}$ or $J_{a_q} \subset J_{a_p}$. Assume that $J_s$ is an admissible subset of the rct $c^i$ with only one white vertex, i.e., its minimal element. This implies that the corresponding rct $c^{n_s}_{|J_{s}}$ is primitive. Therefore, the rooted tree encoding the nesting is simply given by $B^i_+(B^s_+(\mathbf{1}))$. Considering $J_s$ as an admissible extraction $\mathcal{J}:=\{J_s\}$ of $c^i$ yields a single term in $S_Z$
$$
	S_Z(c^i) =- c^i_{|J_i} +  \cdots  + (c^i_{|J_i}/J_{s})_{n_s} c^{n_s}_{|J_{s}} + \cdots,
$$
which can be written
$$
	S_Z(c^i) =- c^i_{|J_i} +  \cdots  - (c^i_{|J_i}/J_{s})_{n_s} S_Z(c^{n_s}_{|J_{s}}) + \cdots,
$$
since $c^{n_s}_{|J_{s}}$ is primitive. Assume now that $J_s$ contains more than one white vertex, which implies that the rct $c^{n_s}_{|J_{s}}$ corresponding to the admissible subset $J_s$ has itself non-trivial sub-rcts. This means that there exist extractions $\mathcal{I}=\{J_{s},J_{l_1},\ldots, J_{l_k}\} \in \mathcal{E}(c^i)\backslash \mathcal{W}(c^i)$ such that $J_{l_a} \subset J_s$ for all $a=1,\ldots,k$, i.e., $\mathcal{I}':=\{J_{l_1},\ldots, J_{l_k}\} \in \mathcal{E}'(c^{n_s}_{|J_{s}})$, where $\mathcal{E}'(c^{n_s}_{|J_{s}})$ are all proper extractions of $c^{n_s}_{|J_{s}}$. This yields
\allowdisplaybreaks{
\begin{eqnarray*}
	S_Z(c^i) &=&- c^i_{|J_i} +  \cdots  + (c^i_{|J_i}/J_{s})_{n_s} c^{n_s}_{|J_{s}} +
					\sum_{\mathcal{I} \in \mathcal{E}(c^i)\backslash \mathcal{W}(c^i) \atop{
	\mathcal{I}:=\{J_s,J_{l_1},\ldots, J_{l_k}\} \atop J_{l_a}\subset J_s, a=1,\ldots,k}}  \chi(t^{i}_{\mathcal{I}})+ \cdots\\
	&=&- c^i_{|J_i} +  \cdots  + (c^i_{|J_i}/J_{s})_{n_s} c^{n_s}_{|J_{s}} +
					\sum_{\mathcal{I}' \in \mathcal{E}'(c^{n_s}_{|J_{s}}) \atop \mathcal{I}':=\{J_{l_1},\ldots, J_{l_k}\}} -(c^i_{|J_i}/J_{s})_{n_s} \chi(t^{n_s}_{\mathcal{I}'})+ \cdots,
\end{eqnarray*}}
which can be written
\begin{eqnarray*}
	S_Z(c^i) &=&- c^i_{|J_i} +  \cdots  - (c^i_{|J_i}/J_{s})_{n_s} \big( - c^{n_s}_{|J_{s}} +
					\sum_{\mathcal{I}' \in \mathcal{E}'(c^{n_s}_{|J_{s}})
 \atop \mathcal{I}':=\{J_{l_1},\ldots, J_{l_k}\}} \chi(t^{n_s}_{\mathcal{I}'}) \big) + \cdots\\	
			&=&- c^i_{|J_i} +  \cdots  - (c^i_{|J_i}/J_{s})_{n_s}
					\sum_{\mathcal{I}' \in \tilde{\mathcal{E}}(c^{n_s}_{|J_{s}}) \atop \mathcal{I}':=\{J_{l_1},\ldots, J_{l_k}\}} \chi(t^{n_s}_{\mathcal{I}'}) + \cdots\\							 
			&=& 	- c^i_{|J_i} +  \cdots - (c^i_{|J_i}/J_{s})_{n_s} S_Z(c^{n_s}_{|J_{s}})	+ \cdots.	
\end{eqnarray*}
Recall that Einstein's convention is in place, which implies a summation over pairs of indices. For an arbitrary proper admissible extraction $\mathcal{I}:=\{J_{l_1},\ldots, J_{l_k}\} \in \mathcal{W}'(c^i)$ this generalizes to
$$
	S_Z(c^i) =- c^i_{|J_i} +  \cdots  - (c^i_{|J_i}/J_{l_1} \cup \ldots  \cup J_{l_k})_{n_{l_1} \cdots n_{l_k}} S_Z(c^{n_{l_1}}_{|J_{{l_1}}}) \cdots S_Z(c^{n_{l_k}}_{|J_{{l_k}}})	+ \cdots,
$$
which then implies that
$$
	S_Z(c^i) = - c^i - \sum_{\mathcal{J} \in {\mathcal{W}'}(c^i)
					\atop \mathcal{J} :=\{J_{l_1},\ldots, J_{l_k}\}}
				c^i_{n_{l_1}\cdots n_{l_k}}
				S_Z(c_{|J_{l_1}}^{n_{l_1}}) \cdots S_Z(c_{|J_{l_k}}^{n_{l_k}}).
$$
\end{proof}

Calculations of the antipode using formula (\ref{antipod1}) show non-trivial cancellations of terms (see Table~\ref{tbl:cancellations-left-antipode}). On the other hand, the forest formula in Theorem~\ref{thm:forestformula} is free of such cancellations. Indeed, this follows from the observation that each extraction $\mathcal{I}= \{J_{l_1},\ldots, J_{l_k}\} \in \tilde{\mathcal{E}}(c^i)$ corresponds to a monomial of $k$ rcts in formula (\ref{antipode3}). Its sign depends solely on the number of elements in $\mathcal{I}$. Cancellations can only occur between monomials of the same number of rcts. Hence, the antipode formula  \eqref{antipode3} is free of such cancellations.

\begin{table}[t]
\caption{Total number of terms (with and without multiplicities) and
cancellations when antipode formula (\ref{antipod1}) is used and $m=1$}
\label{tbl:cancellations-left-antipode}
\begin{tabular}{|c|c|c|c|c|} \hline
coordinate&coordinate & total number & total number & number of \\
map degree&map antipode&  of unique terms & of terms &cancellations \\ \hline
3&$S \Big(\!\!\raisebox{-10pt}{\rt}\Big)$ & 2 & 2 & 0 \\
5&$S \Big(\!\!\raisebox{-10pt}{\rrt}\Big)$ & 6 & 6 & 1 \\
7&$S \Big(\!\!\raisebox{-10pt}{\rrrt}\Big)$ & 17 & 26 & 9 \\
9&$S \Big(\!\!\raisebox{-10pt}{\fourrt}\Big)$ & 50 & 150 & 70 \\
11&$S \Big(\!\!\raisebox{-10pt}{\fivert}\Big)$ & 139 & 1082 & 427 \\
13&$S \Big(\!\!\raisebox{-10pt}{\sixrt}\Big)$ & 390 & 9366 & 2417 \\
15&$S \Big(\!\!\raisebox{-10pt}{\sevenrt}\Big)$ & 1059 & 94,586 & 12,730
\\[0.1in] \hline
\end{tabular}
\end{table}

%%%%%%%%%%%%%%%%%%%%%%%%%%%%%%%%%%%%%%%%%

\subsection{Pre-Lie algebra of rooted circle trees}
\label{ssect:prct}

A {\sl right pre-Lie algebra\/} is a $k$-vector space $P$ with a bilinear composition $\lhd : P \times P \to P$ that satisfies the right pre-Lie identity
\begin{eqnarray}
    (a \lhd b) \lhd c - a \lhd (b \lhd  c) &=& (a \lhd c) \lhd b - a \lhd (c \lhd b),
    \label{prelie}
\end{eqnarray}
for $a,b,c \in P$. As a consequence, the bracket defined by $[a,b]:=a\lhd b-b\lhd a$ satisfies the Jacobi identity.  {\sl Left pre-Lie algebras\/} are defined via an analogous identity. (See \cite{Cartier_11,Manchon_11} for details.)

Restricting the coproduct \eqref{cop} to extractions of single admissible subsets, i.e., linearizing it on both sides, leads to a map $\hat{\Delta}:  \mathcal{C}_f^{\circ} \to \mathcal{C}_f^{\circ} \otimes \mathcal{C}_f^{\circ}$ with
\begin{equation}
\label{PLcop}
	\hat{\Delta}(c^i) = \sum_{J_l \in \mathcal{W}(c^i)} c^i_{n_l} \otimes c_{|J_l}^{n}.
\end{equation}
One can show that \eqref{PLcop} satisfies the right co-pre-Lie relation
\begin{equation}
\label{coPLrel}
	(id - \pi_{23}) \big( (\hat{\Delta} \otimes \id) \circ \hat{\Delta} 
				- (\id \otimes \hat{\Delta}) \circ \hat{\Delta}  \big)(c^i) = 0.
\end{equation}
The map $\pi_{23}$ operates on triple tensor products by exchanging the last two elements, i.e., $\pi_{23}(a \otimes b \otimes c) = a \otimes c \otimes b$.
The combinatorics of this right co-pre-Lie relation is best described by translating it to the dual side of the Hopf algebra of rcts. It will be seen that the corresponding pre-Lie algebra can be described in terms of insertions of rcts into rcts. However, the admissibility of such an insertion depends on the specific decoration of vertices and roots. Therefore, identity \eqref{coPLrel} translates to a right pre-Lie structure on the graded dual $(\mathcal{H}_X^{\circ})^\ast$ of the Hopf algebra $\mathcal{H}_X^{\circ}$ of decorated rcts.

Denote by $z_{{c'}^i}$ the normalized dual of the rct $c^i$ in the graded dual, that is, $\langle z_{{c'}^i}, {c''}^j \rangle=z_{{c'}^i}({c''}^j)=\delta_{{c'}^i, {c''}^j}$, and zero otherwise. The linear map $z_{{c'}^i}$ is an infinitesimal character of $\mathcal{H}_f^{\circ}$, hence a primitive element in the Hopf algebra $(\mathcal{H}_f^{\circ})^\ast$. Therefore, the convolution product
$$
	(z_{{c'}^i} \star z_{{c''}^j})(c^l) 	= m_k \circ (z_{{c'}^i} \otimes z_{{c''}^j}) \circ \Delta(c^l)
							= m_k \circ (z_{{c'}^i} \otimes z_{{c''}^j}) \circ \hat\Delta(c^l) = z_{{c'}^i \lhd {c''}^j}(c^l).
$$
The resulting Lie bracket on the Lie algebra $g=\mathsf{Prim}((\mathcal{H}_f^{\circ})^\ast)$ spanned by the primitive elements $z_{c^i}$ is given by
\begin{eqnarray*}
	(z_{{c'}^i} \star z_{{c''}^j} - z_{{c''}^j} \star z_{{c'}^i})(c^l) &=& \sum_{J \in \mathcal{W}({c}^l)}
								\langle z_{{c'}^i}, c^l_n\rangle \langle z_{{c''}^j}, c^n_{|J}\rangle
											-\sum_{J \in \mathcal{W}(c^l)}
								\langle z_{{c''}^j}, c^l_n\rangle \langle z_{{c'}^i}, c^n_{|J}\rangle\\
								&=& \langle z_{{c'}^i \lhd {c''}^j - {c''}^j \lhd {c'}^i},c^l \rangle.
\end{eqnarray*}
The right pre-Lie product on rcts, ${c'}^i \lhd {c''}^j$, is obtained by inserting ${c''}^j$ into ${c'}^i$. It can be described by writing the rct $c^i$ in terms of the operator $\Theta^{(x_k)}$ for $0\le k \le m$. It is defined on rcts decorated by the alphabet $X$ by inserting a vertex decorated by $x_k$ to the right of the root. Recall that the vertices of $c^i$ are strictly ordered, which implies that the decoration can be written as a word $\eta=x_{k_1} \cdots x_{k_l}=:\tilde{\eta}x_{k_l} = x_{k_1} \bar{\eta} \in X^*$, where $X^*$ denotes the set of words over the alphabet $X$. Then $c^i= \Theta^{(\eta)} (c^i_{\emptyset})=\Theta^{(\tilde{\eta})}\Theta^{(x_{k_l})}(c^i_{\emptyset})=\Theta^{(x_{k_1})}\Theta^{(\bar{\eta})}(c^i_\emptyset)$, where $c^i_{\emptyset}$ is the rct with no internal vertices. Equivalently,
$$
\raisebox{-20pt}{
 \begin{tikzpicture}
 \draw[thick] (0cm,0cm) circle(0.4cm);
                \filldraw[black] (90:0.4cm) (0.08,.48)rectangle(-.08,.34);
                                \draw (75:0.60cm) node[above=-9pt]{$\hspace{-0.2cm}\phantom{l}^{i}$};
	 \filldraw[black] (45:0.4cm) circle(1.7pt);
	 		\draw (30:0.73cm) node[above=-9pt]{${\hspace{-0.2cm}\phantom{l}^{k_1}}$};
	 \filldraw[black] (0:0.4cm) circle(1.7pt);
	 		\draw (-8:0.73cm) node[above=-9pt]{$\hspace{-0.2cm}\phantom{l}^{k_2}$};
	 \filldraw[black] (-45:0.4cm) circle(1.7pt);
	 		\draw (-46:0.74cm) node[above=-9pt]{$\hspace{-0.2cm}\phantom{l}^{k_3}$};
	 \filldraw[black] (-105:0.33cm) circle(0.3pt);
	 \filldraw[black] (-130:0.33cm) circle(0.3pt);
	 \filldraw[black] (-154:0.33cm) circle(0.3pt);
	 \filldraw[black] (-210:0.4cm) circle(1.7pt);
	 		\draw (-215:0.76cm) node[above=-9pt]{$\hspace{0.2cm}\phantom{l}^{k_l}$};
 \end{tikzpicture}}
=
 \Theta^{(\eta)}\Big(\!\! \raisebox{-10pt}{
 \begin{tikzpicture}
 \draw[thick] (0cm,0cm) circle(0.4cm);
   \filldraw[black] (90:0.4cm) (0.08,.48)rectangle(-.08,.34);
                                \draw (75:0.60cm) node[above=-9pt]{$\hspace{-0.2cm}\phantom{l}^{i}$};
  \end{tikzpicture}}\Big)
=
 \Theta^{(\tilde{\eta})}\Big(\!\! \raisebox{-10pt}{
 \begin{tikzpicture}
 \draw[thick] (0cm,0cm) circle(0.4cm);
                \filldraw[black] (90:0.4cm) (0.08,.48)rectangle(-.08,.34);
                                \draw (75:0.60cm) node[above=-9pt]{$\hspace{-0.2cm}\phantom{l}^{i}$};
	 \filldraw[black] (45:0.4cm) circle(1.7pt);
	 		\draw (30:0.7cm) node[above=-9pt]{$\hspace{-0.2cm}\phantom{l}^{k_l}$};
 \end{tikzpicture}}\! \Big)
=
 \Theta^{(x_{k_1})}\Big(\hspace{-0.5cm}
 \raisebox{-20pt}{
 \begin{tikzpicture}
 \draw[thick] (0cm,0cm) circle(0.4cm);
                \filldraw[black] (90:0.4cm) (0.08,.48)rectangle(-.08,.34);
                                \draw (75:0.60cm) node[above=-9pt]{$\hspace{-0.2cm}\phantom{l}^{i}$};
	 \filldraw[black] (0:0.4cm) circle(1.7pt);
	 		\draw (-8:0.73cm) node[above=-9pt]{$\hspace{-0.2cm}\phantom{l}^{k_2}$};
	 \filldraw[black] (-45:0.4cm) circle(1.7pt);
	 		\draw (-46:0.74cm) node[above=-9pt]{$\hspace{-0.2cm}\phantom{l}^{k_3}$};
	 \filldraw[black] (-105:0.33cm) circle(0.3pt);
	 \filldraw[black] (-130:0.33cm) circle(0.3pt);
	 \filldraw[black] (-154:0.33cm) circle(0.3pt);
	 \filldraw[black] (-210:0.4cm) circle(1.7pt);
	 		\draw (-215:0.76cm) node[above=-9pt]{$\hspace{0.2cm}\phantom{l}^{k_l}$};
 \end{tikzpicture}}\!\!\Big)
$$
The right pre-Lie product is given as follows for rcts $c^i  = \Theta^{(x_{k_1})}(\bar{c}^i)= \Theta^{(\eta)}(c^i_\emptyset)$ and $ {c'}^j=\Theta^{(\eta')}(c^j_\emptyset)$ with decorations $\eta, \eta' \in X^*$:
\begin{eqnarray*}
	c^i  \lhd {c'}^j &=& \Theta^{(x_{k_1})}\big(\bar{c}^i  \lhd {c'}^j \big) + \delta_{k_1,j} \Theta^{(x_{0})} \Theta^{(\bar{\eta} \shuffle \eta')}(c^j_\emptyset)
\end{eqnarray*}
and $c^j_\emptyset \lhd {c'}^j =0$. (The definition of $\Theta$ has been linearly extended to $\allpoly$.) This is the multivariable version of the pre-Lie product in \cite{Foissy_13}. Here the {\em shuffle product} of two words is defined by
$$
	(x_i \bar{\eta}) \shuffle (x_j \bar{\xi}) := x_i (\bar{\eta} \shuffle (x_j\bar{\xi})) + x_j((x_i \bar{\eta}) \shuffle \bar{\xi}),
$$
where $x_i,x_j\in X$, $\bar{\eta},\bar{\xi} \in X^\ast$ and with $\eta \shuffle \emptyset = \eta$ \cite{Reutenauer_93}. Hence, the pre-Lie product of two rcts $c^i,{c'}^j$ is the sum over all insertion of ${c'}^j$ at vertices of $c^i$ decorated by $x_j$
$$
	c^i  \lhd {c'}^j = \sum_{v \in V(c^i) \atop \mathsf{Dec}(v)=x_j} c^i   \curvearrowleft   {c'}^j
$$
The insertion itself is defined by changing the decoration of the insertion vertex from $x_{j}$ to $x_{0}$ and by shuffling the word decorating the vertices following the insertion vertex into the word decorating the inner vertices of ${c'}^j$. For example,
$$
\raisebox{-21pt}{
 \begin{tikzpicture}
 \draw[thick] (0cm,0cm) circle(0.4cm);
                \filldraw[black] (90:0.4cm) (0.08,.48)rectangle(-.08,.34);
                                \draw (75:0.60cm) node[above=-9pt]{$\hspace{-0.2cm}\phantom{l}^{i}$};
	 \filldraw[black] (45:0.4cm) circle(1.7pt);
	 		\draw (30:0.73cm) node[above=-9pt]{${\hspace{-0.2cm}\phantom{l}^{k_1}}$};
	 \filldraw[black] (0:0.4cm) circle(1.7pt);
	 		\draw (-8:0.73cm) node[above=-9pt]{$\hspace{-0.2cm}\phantom{l}^{k_2}$};
	 \filldraw[black] (-45:0.4cm) circle(1.7pt);
	 		\draw (-46:0.74cm) node[above=-9pt]{$\hspace{-0.2cm}\phantom{l}^{k_3}$};
 \end{tikzpicture}}
 \lhd
 \raisebox{-25pt}{
 \begin{tikzpicture}
 \draw[thick] (0cm,0cm) circle(0.4cm);
                \filldraw[black] (90:0.4cm) (0.08,.48)rectangle(-.08,.34);
                                \draw (90:0.60cm) node[above=-9pt]{$\hspace{0.2cm}\phantom{l}^{k_1}$};
	 \filldraw[black] (-90:0.4cm) circle(1.7pt);
	 		\draw (-80:0.7cm) node[above=-9pt]{${\hspace{-0.1cm}\phantom{l}^{l}}$};
 \end{tikzpicture}}
=
\raisebox{-25pt}{
 \begin{tikzpicture}
 \draw[thick] (0cm,0cm) circle(0.4cm);
                \filldraw[black] (90:0.4cm) (0.08,.48)rectangle(-.08,.34);
                                \draw (75:0.60cm) node[above=-9pt]{$\hspace{-0.2cm}\phantom{l}^{i}$};
	 \filldraw[black] (45:0.4cm) circle(1.7pt);
	 		\draw (30:0.73cm) node[above=-9pt]{${\hspace{-0.3cm}\phantom{l}^{0}}$};
	 \filldraw[black] (0:0.4cm) circle(1.7pt);
	 		\draw (-8:0.73cm) node[above=-9pt]{$\hspace{-0.3cm}\phantom{l}^{l}$};
	 \filldraw[black] (-45:0.4cm) circle(1.7pt);
	 		\draw (-46:0.74cm) node[above=-9pt]{$\hspace{-0.2cm}\phantom{l}^{k_2}$};
	 \filldraw[black] (-90:0.4cm) circle(1.7pt);
	 		\draw (-90:0.7cm) node[above=-9pt]{$\hspace{-0.2cm}\phantom{l}^{k_3}$};
 \end{tikzpicture}}
+
\raisebox{-25pt}{
 \begin{tikzpicture}
 \draw[thick] (0cm,0cm) circle(0.4cm);
                \filldraw[black] (90:0.4cm) (0.08,.48)rectangle(-.08,.34);
                                \draw (75:0.60cm) node[above=-9pt]{$\hspace{-0.2cm}\phantom{l}^{i}$};
	 \filldraw[black] (45:0.4cm) circle(1.7pt);
	 		\draw (30:0.73cm) node[above=-9pt]{${\hspace{-0.3cm}\phantom{l}^{0}}$};
	 \filldraw[black] (0:0.4cm) circle(1.7pt);
	 		\draw (-8:0.73cm) node[above=-9pt]{$\hspace{-0.2cm}\phantom{l}^{k_2}$};
	 \filldraw[black] (-45:0.4cm) circle(1.7pt);
	 		\draw (-46:0.74cm) node[above=-9pt]{$\hspace{-0.2cm}\phantom{l}^{l}$};
	 \filldraw[black] (-90:0.4cm) circle(1.7pt);
	 		\draw (-90:0.7cm) node[above=-9pt]{$\hspace{-0.2cm}\phantom{l}^{k_3}$};
 \end{tikzpicture}}
+
\raisebox{-25pt}{
 \begin{tikzpicture}
 \draw[thick] (0cm,0cm) circle(0.4cm);
                \filldraw[black] (90:0.4cm) (0.08,.48)rectangle(-.08,.34);
                                \draw (75:0.60cm) node[above=-9pt]{$\hspace{-0.2cm}\phantom{l}^{i}$};
	 \filldraw[black] (45:0.4cm) circle(1.7pt);
	 		\draw (30:0.73cm) node[above=-9pt]{${\hspace{-0.3cm}\phantom{l}^{0}}$};
	 \filldraw[black] (0:0.4cm) circle(1.7pt);
	 		\draw (-8:0.73cm) node[above=-9pt]{$\hspace{-0.2cm}\phantom{l}^{k_2}$};
	 \filldraw[black] (-45:0.4cm) circle(1.7pt);
	 		\draw (-46:0.74cm) node[above=-9pt]{$\hspace{-0.2cm}\phantom{l}^{k_3}$};
	 \filldraw[black] (-90:0.4cm) circle(1.7pt);
	 		\draw (-90:0.7cm) node[above=-9pt]{$\hspace{-0.2cm}\phantom{l}^{l}$};
 \end{tikzpicture}}
+
\raisebox{-25pt}{
 \begin{tikzpicture}
 \draw[thick] (0cm,0cm) circle(0.4cm);
                \filldraw[black] (90:0.4cm) (0.08,.48)rectangle(-.08,.34);
                                \draw (75:0.60cm) node[above=-9pt]{$\hspace{-0.2cm}\phantom{l}^{i}$};
	 \filldraw[black] (45:0.4cm) circle(1.7pt);
	 		\draw (30:0.73cm) node[above=-9pt]{${\hspace{-0.2cm}\phantom{l}^{k_1}}$};
	 \filldraw[black] (0:0.4cm) circle(1.7pt);
	 		\draw (-8:0.73cm) node[above=-9pt]{$\hspace{-0.2cm}\phantom{l}^{k_2}$};
	 \filldraw[black] (-45:0.4cm) circle(1.7pt);
	 		\draw (-46:0.74cm) node[above=-9pt]{$\hspace{-0.2cm}\phantom{l}^{0}$};
	 \filldraw[black] (-90:0.4cm) circle(1.7pt);
	 		\draw (-90:0.7cm) node[above=-9pt]{$\hspace{-0.2cm}\phantom{l}^{l}$};
 \end{tikzpicture}}
$$

%%%%%%%%%%%%%%%%%%%%%%%%%%%%%%%%%%%%%%%%%

\section{Fliess operators and their output feedback interconnection}
\label{sect:Fliess-operators}

The goal of this section is to show that the Hopf algebra $\mathcal{H}_f^\circ$ has a concrete application in the theory of nonlinear control systems. In particular, when two input-output systems represented in terms of Chen-Fliess series are interconnected to form a feedback system, the coordinate maps of the underlying group form an Hopf algebra which is isomorphic to the Hopf algebra $\mathcal{H}_f^\circ$.

\smallskip

Consider an alphabet $X=\{ x_0, x_1, \ldots, x_m\}$ and any finite sequence of letters from $X$, $\eta=x_{i_1}\cdots x_{i_k}$, called words. The set of all words including the empty word, $\emptyset$, is designated by $X^\ast$. It forms a monoid under catenation. Any mapping $\gsc : X^\ast\rightarrow \re^\ell$ is called a {\em formal power series}. The value of $\gsc$ at $\eta \in X^\ast$ is written as $(\gsc,\eta)$ and called the {\em coefficient} of $\eta$ in $\gsc$. Typically, $\gsc$ is represented as the formal sum $\gsc=\sum_{\eta \in X^\ast}(\gsc,\eta)\eta.$ The collection of all formal power series over $X$ is denoted by $\allseriesell$. It forms an associative $\re$-algebra under the catenation product and an associative and commutative $\re$-algebra under the shuffle product \cite{Fliess_81,Reutenauer_93}.

One can formally associate with any series $\gsc \in \allseriesell$ a causal $m$-input, $\ell$-output operator, $F_\gsc$, in the following manner. Let $p \ge 1$ and $t_0 < t_1$ be given. For a Lebesgue measurable function $u: [t_0,t_1] \rightarrow\re^m$, define $\norm{u}_{p}=\max\{\norm{u_i}_{p}: \ 1\le i\le m\}$, where $\norm{u_i}_{p}$ is the usual $L_{p}$-norm for a measurable real-valued function, $u_i$, defined on $[t_0,t_1]$.  Let $L^m_{p}[t_0,t_1]$ denote the set of all measurable functions defined on $[t_0,t_1]$ having a finite $\norm{\cdot}_{p}$ norm and $B_{p}^m(R)[t_0,t_1]:=\{u\in L_{p}^m[t_0,t_1]:\norm{u}_{p}\leq R\}$. Assume $C[t_0,t_1]$ is the subset of continuous functions in $L_{1}^m[t_0,t_1]$. Define inductively for each $\eta \in X^{\ast}$ the map $E_\eta: L_1^m[t_0, t_1]\rightarrow C[t_0, t_1]$ by setting $E_\emptyset[u]=1$ and letting
\[
	E_{x_i\bar{\eta}}[u](t,t_0) = \int_{t_0}^t u_{i}(\tau)E_{\bar{\eta}}[u](\tau,t_0)\,d\tau,
\]
where $x_i \in X$, $\bar{\eta} \in X^{\ast}$, and $u_0=1$. The input-output operator corresponding to $\gsc$ is the {\em Chen-Fliess series} or {\em Fliess operator} \cite{Fliess_81,Fliess_83}
\begeq
	F_\gsc[u](t) = \sum_{\eta \in X^{\ast}} (\gsc,\eta)\,E_\eta[u](t,t_0). \label{eq:Fliess-operator-defined}
\endeq

Given Fliess operators $F_\gsc$ and $F_\gsd$, where $\gsc, \gsd \in \allseriesell$, the parallel and product connections satisfy $F_\gsc+F_\gsd=F_{\gsc+\gsd}$ and $F_\gsc F_\gsd = F_{\gsc \shuffle \gsd}$, respectively \cite{Fliess_81}. When Fliess operators $F_\gsc$ and $F_\gsd$ with $\gsc \in \allseriesell$ and $\gsd \in \allseriesm$ are interconnected in a cascade fashion, the composite system $F_\gsc \circ F_\gsd$ has the Fliess operator representation $F_{\gsc \circ \gsd}$, where the {\em composition product} of $\gsc$ and $\gsd$ is given by
\begdi
	\gsc \circ \gsd = \sum_{\eta \in X^\ast} (\gsc,\eta)\,\compAH_\gsd(\eta)(1)
\enddi
\cite{Ferfera_79,Ferfera_80}. Here $\compAH_\gsd$ is the continuous (in the ultrametric sense) algebra homomorphism from $\allseries$ to $\Endallseries$ uniquely specified by $\compAH_\gsd(x_i \eta)=\compAH_\gsd(x_i)\circ \compAH_\gsd(\eta)$ with
\begdi \label{eq:psi-d-on-words}
	\compAH_\gsd(x_i)(\gse)=x_0(\gsd_i \shuffle \gse),
\enddi
$i=0,1,\ldots,m$ for any $\gse \in \allseries$, and where $\gsd_i$ is the $i$-th component series of $\gsd$, $\gsd_0:=1$, and $\compAH_\gsd(\emptyset)$ is the identity map on $\allseries$. This composition product is associative and $\re$-linear in its left argument.
In the event that two Fliess operators are interconnected to form a feedback system as shown in Figure~\ref{fig:feedback-with-v}, it was shown in \cite{Gray-Wang_08} that there always exists a unique generating series $\gsc @ \gsd$ such that $y=F_{\gsc @ \gsd}[u]$ whenever $\gsc, \gsd \in \allseriesm$. This so called {\em feedback product} of $\gsc$ and $\gsd$ can be viewed as the unique fixed point of a contractive iterated map on a complete ultrametric space, but this approach provides no machinery for computing the product.

Hopf algebraic tools are now employed to compute the feedback product \cite{Gray-Duffauc_Espinosa_SCL11,Gray-Duffauc_Espinosa_FdB14,Gray-et-al_SCL14}. Consider the set of operators
$
	\Fliessdelta:=\{I+F_\gsc : \gsc \in\allseriesm\},
$
where $I$ denotes the identity operator. It is convenient to introduce the symbol $\delta$ as the (fictitious) generating series for the identity map. That is, $F_\delta:=I$ such that $I+F_\gsc :=F_{\delta + \gsc }=F_{\gsc_\delta}$ with $\gsc_\delta := \delta + \gsc$. The set of all such generating series for $\Fliessdelta$ will be denoted by $\allseriesdeltam$. The first theorem describes the multivariable output feedback group which is at the heart of the method. The group product is described in terms of the {\em modified composition product} of $\gsc\in\allseriesell$ and $\gsd\in\allseriesm$,
\begdi
\gsc \modcomp \gsd = \sum_{\eta \in X^\ast} (\gsc,\eta)\, \modcompAH_\gsd(\eta)(1),
\enddi
where $\modcompAH_\gsd$ is the continuous (in the ultrametric sense) algebra homomorphism from $\allseries$ to $\Endallseries$ uniquely specified by $\modcompAH_\gsd(x_i\eta) = \modcompAH_\gsd(x_i) \circ \modcompAH_\gsd(\eta)$ with
\begdi
\label{eq:phi-d-on-words}
	\modcompAH_\gsd(x_i)(\gse)=x_i\gse + x_0(\gsd_i \shuffle \gse),
\enddi
$i=0,1,\ldots,m$ for any $\gse \in\allseries$, and where $\gsd_0:=0$, and $\modcompAH_\gsd(\emptyset)$ is the identity map on $\allseries$. It can be easily shown that for any $x_i \in X$
\begeq
	(x_i \gsc) \modcomp \gsd = x_i( \gsc \modcomp \gsd) + x_0(\gsd_i \shuffle (\gsc \modcomp \gsd)). \label{eq:xic-cmod-d-identity}
\endeq
The following (non-associativity) identity was proved in \cite{Li_04}
\begeq
	(\gsc \modcomp \gsd) \modcomp \gse = \gsc \modcomp (\gsd \modcomp \gse + \gse) \label{eq:cmod-non-associative}
\endeq
for all $\gsc \in \allseriesell$ and $\gsd,\gse \in \allseriesm$. The central idea is that $(\Fliessdelta,\circ,I)$ forms a group of operators under the composition
\begdi
	F_{\gsc_\delta} \circ F_{\gsd_\delta}=(I+F_\gsc) \circ (I+F_\gsd)
	= F_{\gsc_\delta \circ \gsd_\delta},
\enddi
where $\gsc_\delta \circ \gsd_\delta := \delta + \gsd + \gsc \modcomp \gsd=:\delta+\gsc\circledcirc\gsd$.\footnote{The same symbol will be used for composition on $\allseriesm$ and $\allseriesdeltam$. As elements in these two sets have a distinct notation, i.e., $\gsc$ versus $\gsc_\delta$, respectively, it will always be clear which product is at play.}$\,\,$%
Given the uniqueness of generating series of Fliess operators, this assertion is equivalent to the following theorem.

\begth
\label{th:allseriesdeltam-is-group} \cite{Gray-et-al_SCL14}
The triple $\mathcal{G}=(\allseriesdeltam,\circ,\delta)$ is a group.
\endth

An important remark is in order. The reader should be aware that in \cite{Foissy_EJM15} appears another group of a similar type, call it $\hat{\mathcal{G}}$. It is constructed from the direct product of groups and is combinatorially much simpler than $\mathcal{G}$ because the feature
of {\em cross-channel coupling} is highly restricted. Indeed, following the construction in \cite{Foissy_EJM15}, define the canonical projection
$\epsilon_i:\re^m\rightarrow\re^m$ so that
$$
	\epsilon_i[z_1,z_2,\ldots,z_m]\mapsto [0,\ldots,0,z_i,0,\ldots 0].
$$
The group $\hat{\mathcal{G}}_i$ is identified with $\epsilon_i\re^m\langle\langle X\rangle\rangle$, and the corresponding group product is computed using the product
$$
	(\epsilon_i \gsc)\diamond (\epsilon_i \gsd):=(\epsilon_i \gsc)\circledcirc (\epsilon_i \gsd).
$$
Defining $\hat{\mathcal G}$ as the direct product of $\hat{\mathcal{G}}_i$, $i=1,2,\ldots,m$, it is shown in Theorem 4.7 of \cite{Foissy_EJM15} that the induced group product on $\hat{\mathcal G}$ yields the identity
\begin{equation} \label{eq:faux-composition-identity}
	(\epsilon_i \gsc)\diamond (\epsilon_j \gsd)=\epsilon_i \gsc+\epsilon_j \gsd
\end{equation}
when $i \neq j$. That is, the $j$-th input channel represented by the $j$-th component of $\gsd$ does {\em not affect} the $i$-th output channel of the product. But as illustrated in the following example, systems encountered in control theory are rarely so well behaved. In a typical application, such as the one described in \cite{Gray-et-al_SCL14}, the vast majority of terms generated by the group product on ${\mathcal G}$ come precisely from this coupling. Therefore, the product on $\hat{\mathcal G}$ of the same two series will have (orders of magnitude) fewer terms in real problems. 
\begex \label{ex:Foissy-counterexample}
Suppose the alphabet  $X=\{x_0,x_1,x_2\}$ and let
\begdi
\gsc=\left[
\begin{array}{c}
x_2 \\
0
\end{array}
\right],\;\;
\gsd=\left[
\begin{array}{c}
0 \\
x_1
\end{array}
\right]
\enddi
so that the group elements in $\mathcal{G}$ are
\begdi
\gsc_\delta=\left[
\begin{array}{c}
\delta+x_2 \\
\delta
\end{array}
\right],\;\;
\gsd_\delta=\left[
\begin{array}{c}
\delta \\
\delta+x_1
\end{array}
\right].
\enddi
The group product is then 
\begin{equation} \label{eq:counter-example}
\gsc_\delta\circ \gsd_\delta=\delta+\gsc\circledcirc \gsd
=
\left[
\begin{array}{c}
\delta+x_2+x_0x_1 \\
\delta+x_1
\end{array}
\right].
\end{equation}
The corresponding group elements in $\Fliessdelta$ are
\begdi
(I+F_\gsc)[u]=
\left[
\begin{array}{c}
u_1+\int u_2\,d\tau \\
u_2
\end{array}
\right],\;\;
(I+F_\gsd)[u]=
\left[
\begin{array}{c}
u_1 \\
u_2+\int u_1\,d\tau
\end{array}
\right],
\enddi
respectively.
The product on either group corresponds physically to the cascade connection shown in Figure~\ref{fig:groupcascade}.

\begin{figure}[h]
\begin{center}
\includegraphics*[scale=0.7]{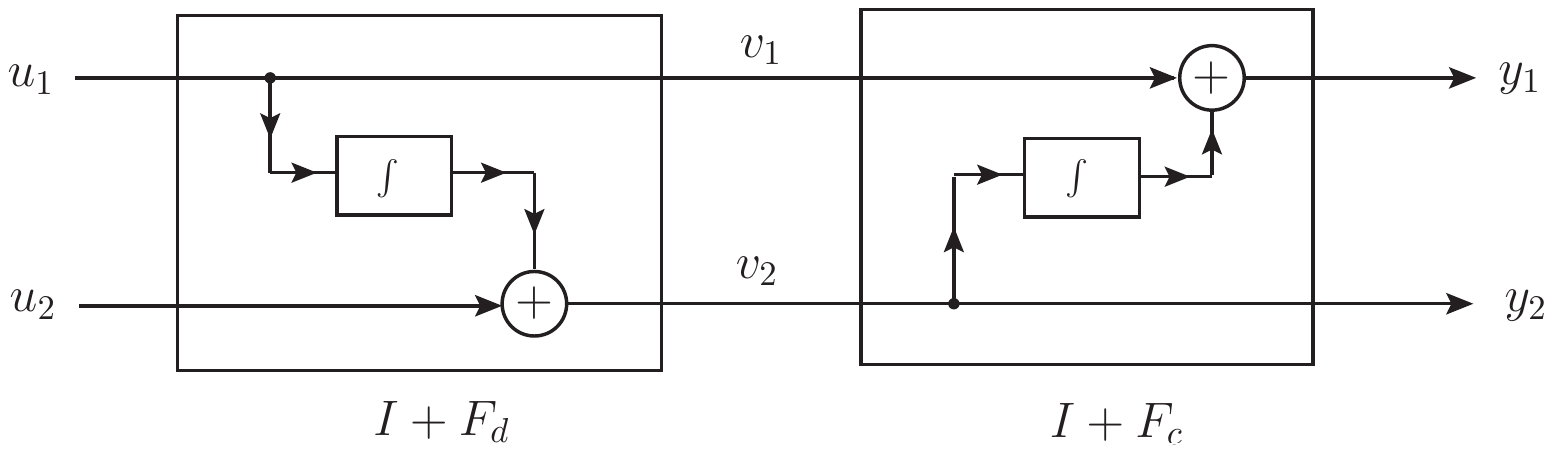}
\end{center}
\vspace*{-0.05in}
\hspace*{-0.1in}$I+F_\gsd$ \hspace{1.42in}$I+F_\gsc$
\caption{Cascade of the two systems in Example~\ref{ex:Foissy-counterexample}}
\label{fig:groupcascade}
\end{figure}

Directly from this diagram it is evident that
\begin{align*}
	y_1&=(I_1+F_{[\gsc\circledcirc \gsd]_1})[u]
		=(I_1+F_{x_2+x_0x_1})[u]=u_1 + u_2\,d\tau_2 +\int \left(\int u_1\,d\tau_1\right)\\
	y_2&=(I_2+F_{[\gsc\circledcirc \gsd]_2})[u]
		=(I_2+F_{x_1})[u]=u_2+\int u_1\,d\tau,
\end{align*}
where $I_i[u]:=u_i$. Note, in particular, that the term $\int \left(\int u_1\,d\tau_1\right)$ corresponding to the word $x_0x_1$ in (\ref{eq:counter-example}) is a direct consequence of the fact that $u_1$ affects $v_2$, and $v_2$ affects $y_1$. On the other hand, the product on $\hat{\mathcal{G}}$ does not capture this coupling term since by \rref{eq:faux-composition-identity}
\begdi
(\epsilon_1 \gsc)\diamond (\epsilon_2 \gsd)=
 \epsilon_1\left[
\begin{array}{c}
x_2 \\
0
\end{array}
\right]\diamond\;
\epsilon_2\left[
\begin{array}{c}
0 \\
x_1
\end{array}
\right]=
\left[
\begin{array}{c}
x_2 \\
0
\end{array}
\right]+
\left[
\begin{array}{c}
0 \\
x_1
\end{array}
\right].
\enddi
Therefore, $\mathcal G$ and $\hat{\mathcal G}$ are distinct group structures defined on
$\re^m\langle\langle X\rangle\rangle$. The latter is simply too restrictive from a control theoretic point of view since it fails to describe the complex interactions that are found in real problems.
\endex

%%%%%%%%%%%%%%%%%%%%%%%%%%%%%%%%%%%%%%%%%

\subsection{Fa\`{a} di Bruno type Hopf algebra}
\label{ssect:FdBHA}

A Fa\`{a} di Bruno type Hopf algebra is now defined for the coordinate maps of the output feedback group. For any $\eta \in X^\ast$, the corresponding coordinate map is the element of the dual space $\allseriesmdual$
\begdi
\label{eq:character-maps}
	a^i_\eta : \allseriesm\rightarrow \re:\gsc \mapsto (\gsc_i,\eta):=a^i_\eta(\gsc),
\enddi
where $i=1,2,\ldots,m$. Let $V$ denote the $\re$-vector space spanned by these maps. Define $\abs{\eta}_{x_i}$ as the number of times the letter $x_i\in X$ appears in the word $\eta$. If the {\em degree} of $a^i_{\eta}$ is defined as $\deg(a^i_{\eta}) = 2\abs{\eta}_{x_0}+\sum_{j=1}^m\abs{\eta}_{x_j}+1$, then $V$ is a connected graded vector space, that is, $V=\bigoplus_{n\geq 0} V^{(n)}$ with
\begdi
	V^{(n)}=\spanset_\re\{a_\eta^i:\deg(a^i_\eta)=n\},\;\;n\geq 1,
\enddi
and $V^{(0)}=\mathbb{R}\mathbf{1}$.
Consider next the free unital commutative $\re$-algebra, $H$, with product
\begdi \label{eq:mu-product}
	m_H(a^i_\eta\otimes a^j_{\xi})= a^i_{\eta}a^j_{\xi}.
\enddi
This product is associative and unital with unit $ \mathbf{1}$. The graduation on $V$ induces a connected graduation on $H$ with $\deg(a^i_\eta a^j_\xi)=\deg(a^i_\eta)+\deg(a^j_\xi)$ and $\deg( \mathbf{1})=0$. Specifically, $H=\bigoplus_{n\geq 0} H^{(n)}$, where $H^{(n)}$ is the set of all elements of degree $n$ and $H^{(0)}=\re \mathbf{1}$.

Three coproducts are now introduced. The first coproduct is $\Delta_\shuffle^j(V^+) \subset V^+\otimes V^+$, where
$V^+ := \bigoplus_{n > 0} V^{(n)}$,
\begin{subequations}
	\label{eq:shuffle-coproduct-induction}
\begin{align}
	\Delta_{\shuffle}^ja^i_{\emptyset}=&a^i_{\emptyset}\otimes a^j_{\emptyset} \\
	\Delta_{\shuffle}^j\circ\theta_k=&(\theta_k\otimes \id + \id \otimes \theta_k)\circ\Delta_{\shuffle}^j,
\end{align}
\end{subequations}
$\id$ is the identity map on $V^+$, and $\theta_k$ denotes the endomorphism on $V^+$ specified by $\theta_k a^i_\eta=a^i_{x_k\eta}$ for $k=0,1,\ldots,m$ and $i,j=1,2,\ldots,m$.
The first few terms of $\Delta_{\shuffle}^j$ are:
\allowdisplaybreaks{
\begin{align*}
	\Delta_{\shuffle}^ja^i_{\emptyset}	 &=a^i_{\emptyset}\otimes a^j_{\emptyset} \\
	\Delta_{\shuffle}^ja^i_{x_{i_1}}		 &=a^i_{x_{i_1}}\otimes a^j_{\emptyset}+a^i_{\emptyset}\otimes a^j_{x_{i_1}} \\
	\Delta_{\shuffle}^ja^i_{x_{i_2}x_{i_1}} &=a^i_{x_{i_2}x_{i_1}}\otimes a^j_{\emptyset}+a^i_{x_{i_2}}\otimes a^j_{x_{i_1}}
									+a^i_{x_{i_1}}\otimes a^j_{x_{i_2}}+a^i_{\emptyset}\otimes a^j_{x_{i_2}x_{i_1}} \\
	\Delta_{\shuffle}^ja^i_{x_{i_3}x_{i_2}x_{i_1}} &= a^i_{x_{i_3}x_{i_2}x_{i_1}}\otimes a^j_{\emptyset}
										+a^i_{x_{i_3}x_{i_2}}\otimes a^j_{x_{i_1}}+
									     a^i_{x_{i_3}x_{i_1}}\otimes a^j_{x_{i_2}}
									    	+a^i_{x_{i_3}}\otimes a^j_{x_{i_2}x_{i_1}}+\\
									    &\hspace*{0.18in} a^i_{x_{i_2}x_{i_1}}\otimes a^j_{x_{i_3}}
									    	+a^i_{x_{i_2}}\otimes a^j_{x_{i_3}x_{i_1}}+
									    a^i_{x_{i_1}}\otimes a^j_{x_{i_3}x_{i_2}}
									    	+a^i_{\emptyset}\otimes a^j_{x_{i_3}x_{i_2}x_{i_1}}.
\end{align*}}

The second coproduct is $\tilde{\Delta}(H)\subset V\otimes H$, which is induced by the identity
\begeq
	\tilde{\Delta}a^i_{\eta}(\gsc,\gsd)=a^i_{\eta}(\gsc \modcomp \gsd)
	=(\gsc_i \modcomp \gsd,\eta),\;\; \eta\in X^\ast.
	\label{eq:tilde-delta-identity}
\endeq
This coproduct can be computed recursively as described in the following lemma, which is the multivariable version of Proposition 3 in \cite{Foissy_13}.

\begle \cite{Gray-et-al_SCL14} \label{le:tilde-delta-inductions}
The following identities hold: $\tilde{\Delta}(\mathbf{1})=0$, and
\begdes
	\item[\hspace*{0.15in}(1)] $\tilde{\Delta}a^i_\emptyset=a^i_{\emptyset}\otimes \mathbf{1}$
	\item[\hspace*{0.15in}(2)]
		$\tilde{\Delta}\circ \theta_i = (\theta_i\otimes \id) \circ \tilde{\Delta}$
	\item[\hspace*{0.15in}(3)]
		$\tilde{\Delta} \circ \theta_0 = (\theta_0 \otimes \id) \circ \tilde{\Delta}+
			(\theta_i \otimes \mu) \circ (\tilde{\Delta} \otimes \id) \circ \Delta_{\shuffle}^i$,
\enddes
$i=1,2,\ldots,m$, where $\id$ denotes the identity map on $H$.
\endle

The third coproduct is defined as $\Delta a^i_\eta := \tilde{\Delta}a^i_\eta + \mathbf{1} \otimes a^i_\eta$. The coassociativity of $\Delta$ follows from the associativity of the product $\gsc\circledcirc \gsd$ and the identity $\Delta a_\eta^i(\gsc,\gsd)=((\gsc \circledcirc \gsd)_i,\eta)$. This coproduct is used in the following central result.

\begth \label{thm:FdBHA}
\cite{Foissy_13,Gray-et-al_SCL14}
	$(H,m_H,\Delta)$ is a connected graded commutative unital Hopf algebra.
\endth

The next theorem provides a {\em fully} recursive algorithm to compute the antipode for the feedback group.

\begth \cite{Gray-et-al_MTNS14,Gray-et-al_SCL14}
The antipode, $S$, of any coordinate map $a_\eta^i \in V^+$ in the output feedback group can be computed by the following algorithm:
\begdes
	\item[\hspace*{0.15in}1] Recursively compute $\Delta_{\shuffle}$ via \rref{eq:shuffle-coproduct-induction}.
	\item[\hspace*{0.15in}2] Recursively compute $\tilde{\Delta}$ via Lemma~\ref{le:tilde-delta-inductions}.
	\item[\hspace*{0.15in}3] Recursively compute $S$  by
\begin{eqnarray}
	S(a_\eta^i)	\!\!&=&\!\!  - m_H(S \otimes (\id - e \circ \varepsilon))\Delta(a_\eta^i)
						= - a_\eta^i - m_H(S \otimes \id) \Delta^\prime(a_\eta^i ) \label{1stAntipode}\\
				\!\!&=&\!\!  - m_H( (\id - e \circ \varepsilon)  \otimes S)\Delta(a_\eta^i)
						= - a_\eta^i - m_H(\id \otimes S) \Delta^\prime(a_\eta^i ), \label{2ndAntipode}
\end{eqnarray}
where $\varepsilon: H \to \mathbb{R}$ and $e: \mathbb{R} \to H$ are the counit and unit maps of $H$, respectively. The reduced coproduct is defined as $\Delta^\prime ( a_\eta^i )=\Delta( a_\eta^i ) -  a_\eta^i \otimes \mathbf{1} - \mathbf{1} \otimes  a_\eta^i = \tilde{\Delta}( a_\eta^i ) -  a_\eta^i \otimes \mathbf{1}$.
\enddes
\endth

Applying the algorithm above gives the antipode of the first few coordinate maps:
\begin{align}
	H^{(1)}&:S(a^i_\emptyset) = - a^i_\emptyset \nonumber \\
	H^{(2)}&:S(a^i_{x_j}) 		= - a^i_{x_j} \nonumber \\
	H^{(3)}&:S(a^i_{x_0}) 		= - a^i_{x_0} + a^i_{x_{n_1}} a^{n_1}_\emptyset \nonumber \\
	H^{(3)}&:S(a^i_{x_jx_k})	= - a^i_{x_jx_k} \nonumber \\
	H^{(4)}&:S(a^i_{x_0x_j}) 	= - a^i_{x_0x_j} + a^i_{x_{n_1}}a^{n_1}_{x_j} + a^i_{x_{n_1} x_j} a^{n_1}_\emptyset \nonumber \\
	H^{(4)}&:S(a^i_{x_jx_0}) 	= - a^i_{x_jx_0} + a^i_{x_jx_{n_1}}a^{n_1}_{\emptyset} \nonumber \\
	H^{(4)}&:S(a^i_{x_j x_k x_l}) 	= - a^i_{x_j x_k x_l} \nonumber \\
	H^{(5)}&:S(a^i_{x_0^2}) 	= - a^i_{x_0^2} - S(a^i_{x_{n_1}})a^{n_1}_{x_0} - S(a^i_{x_{n_1} x_0})a^{n_1}_\emptyset -
							S(a^i_{x_0x_{n_2}})a^{n_2}_\emptyset-  \label{eq:right-S-example} \\
        &\hspace*{0.77in} S(a^i_{x_{n_1} x_{n_2}})a^{n_1}_\emptyset a^{n_2}_\emptyset \nonumber \\
		&\hspace*{0.55in}	= - a^i_{x_0^2} - (-a^i_{x_{n_1}})a^{n_1}_{x_0} - (-a^i_{x_{n_1} x_0}
							+ \cancel{a^i_{x_{n_1} x_{n_2}} a^{n_2}_\emptyset}) a^{n_1}_\emptyset - \nonumber \\
		&\hspace*{0.77in} 		(-a^i_{x_0 x_{n_2}} + a^i_{x_{n_1}} a^{n_1}_{x_{n_2}}
							+ a^i_{x_{n_1} x_{n_2}} a^{n_1}_{\emptyset})a^{n_2}_\emptyset- 	
							\cancel{(-a^i_{x_{n_1} x_{n_2}})a^{n_1}_\emptyset a^{n_2}_\emptyset} \nonumber \\
		&\hspace*{0.55in}	= - a^i_{x_0^2} + a^i_{x_{n_1}}a^{n_1}_{x_0} + a^i_{x_{n_1} x_0}a^{n_1}_\emptyset
							+ a^i_{x_0x_{n_2}}a^{n_2}_\emptyset-
							a^i_{x_{n_1}}a^{n_1}_{x_{n_2}} a^{n_2}_{\emptyset}- \nonumber \\
		&\hspace*{0.77in}	a^i_{x_{n_1} x_{n_2}}a^{n_1}_{\emptyset}a^{n_2}_{\emptyset}, \nonumber
\end{align}
where $i,j,k,l=1,2,\ldots m$. Recall that Einstein's summation convention is in place, so these expressions have implied summations.
The explicit calculation for $S(a^i_{x_0^2})$ is shown above to demonstrate that this approach, not unexpectedly, involves some inter-term cancellations. This is consistent with what is known about the classical Fa\`{a} di Bruno Hopf algebra and Zimmermann's forest formula. The latter can provide a cancellation free approach to computing the antipode \cite{Einziger_10,Haiman-Schmitc_89}. Such a forest formula without cancellations will be given shortly using the results from subsection \ref{ssect:zimmermann}.
But first the link between the Hopf algebra of decorated rct and the Fa\`{a} di Bruno type Hopf algebra for the output feedback group is presented in the following theorem.

\begth \label{thm:HAiso}
	The Hopf algebras $\mathcal{H}_X^\circ$ and $H$ are isomorphic.
\endth

\begin{proof}
Define the bijection $\varphi: \mathcal{C}_X^\circ \to H$, $\varphi(c^i):=a^i_{\eta}$, where the word $\eta$ consist of the letters decorating the rct $c^i$, and the order is implied by the order on $c^i$. The map $\varphi$ is extended multiplicatively to an algebra isomorphism.
The main claim is that the coproduct (\ref{coprod}) satisfies the set of recursions in Lemma~\ref{le:tilde-delta-inductions}. Note that this  also implies coassociativity of (\ref{coprod}). To this end, two operators $\Theta_{\begin{tikzpicture}  \filldraw[fill=white, draw=black,thick] (0,0) circle(1.3pt); \end{tikzpicture}}$ and $\Theta_{\begin{tikzpicture}  \filldraw[black] (0,0) circle(1.3pt); \end{tikzpicture}}$ are defined on $\mathcal{C}_f^{\circ}$. Both insert vertices to the right of the root
$$
	\Theta_{\begin{tikzpicture}  \filldraw[fill=white, draw=black,thick] (0,0) circle(1.3pt); \end{tikzpicture}}\Big(\!\!\raisebox{-10pt}{\et}\Big)=\raisebox{-10pt}{\rt}
	\qquad
	\Theta_{\begin{tikzpicture}  \filldraw[fill=white, draw=black,thick] (0,0) circle(1.3pt); \end{tikzpicture}}
	\Theta_{\begin{tikzpicture}  \filldraw[black] (0,0) circle(1.3pt); \end{tikzpicture}}\Big(\!\!\raisebox{-10pt}{\et}\Big)=
	\Theta_{\begin{tikzpicture}  \filldraw[fill=white, draw=black,thick] (0,0) circle(1.3pt); \end{tikzpicture}}\Big(\!\!\raisebox{-10pt}{\bt}\Big)=\raisebox{-10pt}{\rbt}
	\qquad
	\Theta_{\begin{tikzpicture}  \filldraw[fill=white, draw=black,thick] (0,0) circle(1.3pt); \end{tikzpicture}}\Big(\!\!\raisebox{-10pt}{\brbt}\Big)=\raisebox{-10pt}{\rbrbt}
$$
The map $\Theta^{(n_1)}_{\begin{tikzpicture}  \filldraw[black] (0,0) circle(1.3pt); \end{tikzpicture}}$ is defined by inserting a black vertex indexed by $n_1$ to right of the root
$$
	\Theta^{(n_1)}_{\begin{tikzpicture}  \filldraw[black] (0,0) circle(1.3pt); \end{tikzpicture}}\Big(\!\!\raisebox{-10pt}{\et}\Big)=\raisebox{-20pt}{\btn}
	\qquad
	\Theta^{(n_1)}_{\begin{tikzpicture}  \filldraw[black] (0,0) circle(1.3pt); \end{tikzpicture}}\Big(\!\!\raisebox{-10pt}{\rt}\Big)=\raisebox{-18pt}{\bnrt}
	\qquad
	\Theta^{(n_1)}_{\begin{tikzpicture}  \filldraw[black] (0,0) circle(1.3pt); \end{tikzpicture}}
	\Theta^{(n_2)}_{\begin{tikzpicture}  \filldraw[black] (0,0) circle(1.3pt); \end{tikzpicture}}\Big(\!\!\raisebox{-10pt}{\et}\Big)=\raisebox{-18pt}{\bkbnt}
$$
First, assume $c^i = \Theta_{\begin{tikzpicture}  \filldraw[black] (0,0) circle(1.3pt); \end{tikzpicture}}(\tilde{c}^i) \in \mathcal{H}_f^\circ$. Then it follows that
$$
	\Delta(c^i)=\Delta(\Theta_{\begin{tikzpicture}  \filldraw[black] (0,0) circle(1.3pt); \end{tikzpicture}}(\tilde{c}^i))
			= ( \Theta_{\begin{tikzpicture}  \filldraw[black] (0,0) circle(1.3pt); \end{tikzpicture}}\otimes \id) \Delta(\tilde{c}^i),
$$
since none of the admissible extractions include the minimal element in $V(c)$, that is, the first vertex to the right of the root. Next, it is assumed that $c^i = \Theta_{\begin{tikzpicture}  \filldraw[fill=white, draw=black,thick] (0,0) circle(1.3pt); \end{tikzpicture}}(\tilde{c}^i)$. Then
\allowdisplaybreaks{
\begin{eqnarray}
	\Delta(c^i)\!\!&=&\!\!  \sum_{\mathcal{J} \in {\mathcal{W}}(c^i)
					\atop \mathcal{J} =\{J_{l_1},\ldots, J_{l_k}\}}
				c^i_{n_{l_1}\cdots n_{l_k}}
				\otimes c_{|J_{l_1}}^{n_{l_1}} \cdots c_{|J_{l_k}}^{n_{l_k}} \nonumber \\
			\!\!&=&\!\! 	\sum_{\mathcal{J} \in {\mathcal{W}}(c^i)
					\atop {\mathcal{J} =\{J_{l_1},\ldots, J_{l_k}\}\atop l_1 > 1}}\hspace{-0.4cm}
				c^i_{n_{l_1}\cdots n_{l_k}}
				\otimes c_{|J_{l_1}}^{n_{l_1}} \cdots c_{|J_{l_k}}^{n_{l_k}} +
				\sum_{{\mathcal{J} \in {\mathcal{W}}(c^i)
					\atop \mathcal{J} =\{J_{l_1},\ldots, J_{l_k}\}} \atop l_1=1}\hspace{-0.4cm}
				c^i_{n_{l_1}\cdots n_{l_k}}
				\otimes c_{|J_{l_1}}^{n_{l_1}} \cdots c_{|J_{l_k}}^{n_{l_k}}. \label{split1}
\end{eqnarray}}%
The first sum on the right-hand side of (\ref{split1}) includes all admissible extractions with admissible subsets that do not have the first white vertex of $c^i= \Theta_{\begin{tikzpicture}  \filldraw[fill=white, draw=black,thick] (0,0) circle(1.3pt); \end{tikzpicture}}(\tilde{c}^i)$ as the minimal element. The admissible extractions in the second sum on the right-hand side of (\ref{split1}) all include an admissible subset with the first white vertex of $c^i$ as the minimal element. Therefore, the coproduct \eqref{split1} can be written as
\allowdisplaybreaks{
\begin{eqnarray*}
	\Delta(c^i) &=& \sum_{\mathcal{J} \in {\mathcal{W}}(c^i)
				\atop {\mathcal{J} =\{J_{l_1},\ldots, J_{l_k}\}\atop l_1 > 1}}\hspace{-0.4cm}
			\Theta_{\begin{tikzpicture}  \filldraw[fill=white, draw=black,thick] (0,0) circle(1.3pt); \end{tikzpicture}}(\tilde{c}^i_{n_{l_1}\cdots n_{l_k}})
			\otimes c_{|J_{l_1}}^{n_{l_1}} \cdots c_{|J_{l_k}}^{n_{l_k}} +\\
	& &\qquad\		\sum_{\mathcal{J} \in {\mathcal{W}}(c^i)
				\atop \mathcal{J} =\{J_{1}, J_{l_2},\ldots, J_{l_k}\}}\hspace{-0.4cm}
			\Theta^{(n_{1})}_{\begin{tikzpicture}  \filldraw[black] (0,0) circle(1.3pt); \end{tikzpicture}}\big((c^i / J_{1})_{n_{l_2}\cdots n_{l_k}}\big)
			\otimes c_{|J_{1}}^{n_{1}} c_{|J_{l_2}}^{n_{l_2}}\cdots c_{|J_{l_k}}^{n_{l_k}}.
\end{eqnarray*}}%
The first sum can be simplified to
$$
	\sum_{\mathcal{J} \in {\mathcal{W}}(c^i)
				\atop {\mathcal{J} =\{J_{l_1},\ldots, J_{l_k}\}\atop l_1 > 1}}\hspace{-0.4cm}
			\Theta_{\begin{tikzpicture}  \filldraw[fill=white, draw=black,thick] (0,0) circle(1.3pt); \end{tikzpicture}}(\tilde{c}^i_{n_{l_1}\cdots n_{l_k}})
			\otimes c_{|J_{l_1}}^{n_{l_1}} \cdots c_{|J_{l_k}}^{n_{l_k}}
	=( \Theta_{\begin{tikzpicture}  \filldraw[fill=white, draw=black,thick] (0,0) circle(1.3pt); \end{tikzpicture}} \otimes \id )	\hspace{-0.4cm}
		\sum_{\mathcal{J} \in {\mathcal{W}}(\tilde{c}^i)
				\atop \mathcal{J} =\{J_{l_1},\ldots, J_{l_k}\}}
			\tilde{c}^i_{n_{l_1}\cdots n_{l_k}}
			\otimes \tilde{c}_{|J_{l_1}}^{n_{l_1}} \cdots \tilde{c}_{|J_{l_k}}^{n_{l_k}}.
$$
The second sum yields
\allowdisplaybreaks{
\begin{eqnarray*}
\lefteqn{\sum_{\mathcal{J} \in {\mathcal{W}}(c^i)
				\atop \mathcal{J} =\{J_{1}, J_{l_2},\ldots, J_{l_k}\}}\hspace{-0.4cm}
			\Theta^{(n_{1})}_{\begin{tikzpicture}  \filldraw[black] (0,0) circle(1.3pt); \end{tikzpicture}}\big((c^i / J_{1})_{n_{l_2}\cdots n_{l_k}}\big)
			\otimes c_{|J_{1}}^{n_{1}}  c_{|J_{l_2}}^{n_{l_2}} \cdots c_{|J_{l_k}}^{n_{l_k}}}\\
&=& (\Theta^{(n_{1})}_{\begin{tikzpicture}  \filldraw[black] (0,0) circle(1.3pt); \end{tikzpicture}} \otimes m_{\mathcal{H}^\circ_\mathcal{C}})
	\sum_{\mathcal{J} \in {\mathcal{W}}(c^i)
				\atop \mathcal{J} =\{J_{1}, J_{l_2},\ldots, J_{l_k}\}}\hspace{-0.4cm}
			(c^i / J_{1})_{n_{l_2}\cdots n_{l_k}}
			\otimes c_{|J_{1}}^{n_{1}} \otimes c_{|J_{l_2}}^{n_{l_2}}\cdots c_{|J_{l_k}}^{n_{l_k}}\\
&=& (\Theta^{(n_{1})}_{\begin{tikzpicture}  \filldraw[black] (0,0) circle(1.3pt); \end{tikzpicture}} \otimes m_{\mathcal{H}^\circ_\mathcal{C}})
	\sum_{J_1 \in {\mathcal{W}}(c^i)}\sum_{\mathcal{J} \in {\mathcal{W}}(c^i / J_{1})
				\atop \mathcal{J} =\{J_{s_1},\ldots, J_{s_u}\}}\hspace{-0.4cm}
			(c^i / J_{1})_{n_{s_1}\cdots n_{s_u}}
			 \otimes (c^i / J_{1})_{|J_{s_1}}^{n_{s_1}}\cdots (c^i / J_{1})_{|J_{s_u}}^{n_{s_u}}\otimes c_{|J_{1}}^{n_{1}}\\
&=&  (\Theta^{(n_{1})}_{\begin{tikzpicture}  \filldraw[black] (0,0) circle(1.3pt); \end{tikzpicture}} \otimes m_{\mathcal{H}^\circ_\mathcal{C}})
	(\Delta \otimes \id) \sum_{J_1 \in {\mathcal{W}}(c^i)}
			(c^i / J_{1}) \otimes c_{|J_{1}}^{n_{1}}			
\end{eqnarray*}}
such that
\allowdisplaybreaks{
\begin{eqnarray*}
	\Delta(c^i)&=&( \Theta_{\begin{tikzpicture}  \filldraw[fill=white, draw=black,thick] (0,0) circle(1.3pt); \end{tikzpicture}}\otimes \id) \Delta(\tilde{c}^i) +
				(\Theta^{(n_1)}_{\begin{tikzpicture}  \filldraw[black] (0,0) circle(1.3pt); \end{tikzpicture}} \otimes
				m_{\mathcal{H}^\circ_\mathcal{C}})(\Delta \otimes \id)\sum_{J_1 \in {\mathcal{W}}(c^i)}(c^i / J_1) \otimes c_{|J_1}^{n_1}.
\end{eqnarray*}}%
Recall that $(c^i / J_1)$ is the rct with vertex set $V(c^i / J_1):= V(c) - J_1$. The sub-rct $c_{|J_1}^{n_1}$ is defined by making the minimal element in $J_1$ its root indexed by $n_1$, and the remaining elements of the admissible subset $J_1$ are the internal vertices ordered according to the order in $J_1$. The sum in the second term on the right-hand side is over all admissible subsets that have the first white vertex of $c^i$ as its minimal element. This corresponds to extracting single sub-rcts. In fact, the last sum can be given a name
$$
	\triangle^{n_1}_{\!\shuffle}(c^i) := \sum_{J_1 \in {\mathcal{W}}(c^i)} (c^i / J_1) \otimes c_{|J_1}^{n_1}
	\in \mathcal{C}^\circ_f \otimes \mathcal{C}^\circ_f,
$$
such that
$$
	\Delta(c^i)=( \Theta_{\begin{tikzpicture}  \filldraw[fill=white, draw=black,thick] (0,0) circle(1.3pt); \end{tikzpicture}}\otimes \id) \Delta(\tilde{c}^i) +
				(\Theta^{(n_1)}_{\begin{tikzpicture}  \filldraw[black] (0,0) circle(1.3pt); \end{tikzpicture}} \otimes
				m_{\mathcal{H}^\circ_\mathcal{C}})(\Delta \otimes \id) \triangle^{n_1}_{\!\shuffle}(c^i).
$$
The notation becomes clear when comparing it with the recursive definition of the coproduct in Lemma~\ref{le:tilde-delta-inductions}. Recall that black and white vertices of a rct $c^i \in C^\circ_X$ represent vertices which are decorated by $x_k$, $k \neq 0$, and $x_0$, respectively. The rct $c^i \in C^\circ_X$ with vertices decorated by $x_{k_1} \cdots x_{k_l}$ of weight $l+1$ and degree $\deg(c^i)$ is mapped via $\varphi$ to the coordinate function $a^i_{x_{k_1} \cdots x_{k_l}}$ of degree $\deg(a^i_{x_{k_1} \cdots x_{k_l}})$. The map $ \triangle^{n_1}_{\!\shuffle}$ corresponds via $\varphi$ to the classical deshuffling coproduct, i.e., $(\varphi \otimes \varphi)\triangle^{i}_{\!\shuffle}(c^i) = \Delta_{\shuffle}^i(\varphi(c^i))$, such that
$$
	(\varphi \otimes \varphi)\Delta(c^i) = \Delta(\varphi(c^i)).
$$
On the right-hand side the coproduct is that of $(H,m_H,\Delta)$ in Theorem \ref{thm:FdBHA}.
\end{proof}
In light of Theorem \ref{thm:HAiso}, the forest formula presented in Theorem \ref{thm:forestformula} provides a cancellation free formula for the antipode in $H$. For example, recalling \rref{eq:left-S-rct-example} and \rref{eq:right-S-example}, observe:
\allowdisplaybreaks{
\begin{eqnarray*}
	\varphi\Big(
	S \Big(\!\!\raisebox{-10pt}{\rrt}\Big) \Big)&=&
	\varphi\bigg( - \raisebox{-10pt}{\rrt} 	
	+ \raisebox{-21pt}{\btn}   \raisebox{-10pt}{\rtn}\;
	+ \raisebox{-18pt}{\bnrt}\!\!  \raisebox{-9pt}{\etn}
	+\!\! \raisebox{-19pt}{\rbnt}  \raisebox{-9pt}{\etk}\\
	& &
		- \raisebox{-19pt}{\btn}  \raisebox{-19pt}{\bktn}  \raisebox{-9pt}{\etk}
		-\!\!  \raisebox{-19pt}{\bkbnt} \!\! \raisebox{-9pt}{\etn}  \raisebox{-9pt}{\etk}\bigg) \\
	&=&-a^i_{x_0^2}
		+a^i_{x_{n_1}}a^{n_1}_{x_0}
		+ a^i_{x_{n_1} x_0}a^{n_1}_\emptyset
		+ a^i_{x_0x_{n_2}}a^{n_2}_\emptyset \\
	& &	
		- a^i_{x_{n_1}}a^{n_1}_{x_{n_2}} a^{n_2}_{\emptyset}
		-a^i_{x_{n_1} x_{n_2}}a^{n_1}_{\emptyset}a^{n_2}_{\emptyset}\\	
	&=& S(a^i_{x_0^2}) = S\Big(\varphi \Big(\!\!\raisebox{-10pt}{\rrt}\Big)\Big)
	\end{eqnarray*}}

%%%%%%%%%%%%%%%%%%%%%%%%%%%%%%%%%%%%%%%%%

\subsection{Composition of Chen--Fliess series and the convolution product}
\label{ssect:ChFcomp}

\begin{table}[t]
\caption{Composition products of formal power series induced by Fliess operator compositions}
\label{tbl:composition-products}
\begin{tabular}{|c|c|c|c|} \hline
operator composition & generating series & coproduct & remarks \\ \hline
$F_\gsc\circ F_\gsd$ & $\gsc \circ \gsd$ & $\;\Delta^\prime$ & associative \\
$F_\gsc\circ (I+F_\gsd)$ & $\gsc \modcomp \gsd$ & $\tilde{\Delta}$ & nonassociative \\
$(I+F_\gsc)\circ F_\gsd$ & $\gsc \;\hat{\circ}\; \gsd$ & $\hat{\Delta}$ & nonassociative \\
$(I+F_\gsc)\circ (I+F_\gsd)$ & $\gsc \circledcirc \gsd$ & $\Delta$ & group product \\ \hline
\end{tabular}
\end{table}

\begin{figure}[t]
\begin{tikzpicture}
\draw[->] (0.5,4) -- (3.5,4);
\draw[->] (0,3.5) -- (0,0.5);
\draw[->] (4,3.5) -- (4,0.5);
\draw[->] (0.5,0) -- (3.5,0);
\node at (0,0) {$\hat{\Delta}$,$\hat{\circ}$};
\node at (0,4) {$\Delta^\prime$,$\circ$};
\node at (4,0) {$\Delta$, $\circledcirc$};
\node at (4,4) {$\tilde{\Delta}$,$\modcomp$};
\node at (-0.9,2) {$+({\bf 1}\otimes a_\eta^i)$};
\node at (4.9,2) {$+({\bf 1}\otimes a_\eta^i)$};
\node at (2,-0.4) {$+(a_\eta^i\otimes {\bf 1})$};
\node at (2,4.4) {$+(a_\eta^i\otimes {\bf 1})$};
\end{tikzpicture}
\caption{Relationship between the coproducts induced by series composition}
\label{fig:composition-products-commutative-diagram}
\end{figure}

In this subsection, the various composition products appearing in the previous sections are examined in more depth to reveal their relationship to each other as well as to the convolution product on the dual of $H$. The situation is summarized in Table~\ref{tbl:composition-products}.  The product $\gsc\hat{\circ}\gsd$ is included here for completeness, but its role in the theory is relatively minor. Each composition induces a corresponding coproduct via identities like \rref{eq:tilde-delta-identity}. These coproduct are easily shown to be related by adding specific terms of the primitive $a_\eta^i\otimes {\bf 1}+{ \bf 1}\otimes a_\eta^i$ as shown in Figure~\ref{fig:composition-products-commutative-diagram}.

With each $\gsc_\delta\in\allseriesdeltam$, one can identify an element $\Phi_\gsc\in L(H,\re)$ where by
\begdi
	\Phi_\gsc:a^i_\eta\mapsto a^i_\eta(\gsc)=(\gsc_i,\eta),
\enddi
$\Phi_\gsc(\mbf{1})=1$, and
\begdi
	\Phi_\gsc(a^i_\eta a^j_\xi)=a^i_\eta(\gsc) a^j_\xi(\gsc)=\Phi_\gsc(a^i_\eta)\Phi_\gsc(a^j_\xi).
\enddi
Given that $L(H,\re)\subset \allseries^{\ast\ast}$, there is clearly a canonical embedding of $L(H,\re)$ in $\allseries$ so that
characters of $H$ can be identified with series in $\allseries$.
In addition,
\begin{align*}
	(\Phi_\gsc \star \Phi_\gsd)(a^i_\eta)&=m_k\circ(\Phi_\gsc\otimes \Phi_\gsd)\circ\Delta a^i_\eta \\
		&=\sum \Phi_\gsc((\Delta a^i_\eta)_{(1)})\Phi_\gsd((\Delta a^i_\eta)_{(2)}) \\
		&=\sum (\Delta a^i_\eta)_{(1)}(\gsc) (\Delta a^i_\eta)_{(2)}(\gsd) \\
		&= \Delta a^i_\eta(\gsc,\gsd) \\
		&= a_\eta^i(\gsc\circledcirc \gsd).
\end{align*}
So in some sense an identification can also be made between convolution of characters and composition of series. In fact, this can be done for any of the four series compositions described above. This is not completely surprising from a system theory point of view given the fact that the composition of two linear time-invariant systems with respective impulse responses $h_\gsc$ and $h_\gsd$ has the impulse response $h_\gsc\star h_\gsd$, where in this case `$\star$' denotes function convolution. In the analytic case, function convolution is equivalent to the composition of the corresponding generating series, i.e., $h_\gsc\circ h_\gsd = h_{\gsc\circ \gsd}$, which in turn reduces to a convolution sum over a one letter alphabet \cite{Gray-Li_05}. Put another way, the Hopf convolution product induced by $\gsc\circ \gsd$ can be viewed as a nonlinear generalization of this classic convolution theorem in linear system theory. Continuing with the above development, since the composition inverse of $\gsc_\delta$ in Theorem~\ref{th:allseriesdeltam-is-group} is $\gsc_\delta^{-1}=\delta+\gsc^{-1}$ with $a_\eta^i(\gsc^{-1})=(S (a^i_\eta))(\gsc)$, it follows that
$
	\Phi_\gsc^{\star -1}=\Phi_\gsc \circ S=\Phi_{\gsc^{-1}}.
$
Finally, the infinitesimal characters associated with $H$ can be identified with elements $\xi_\gsc\in L(H,\re)$, where $\xi_\gsc(a_\eta^i)=\sum_{k\geq 1} (-1)^{(k-1)}(\gsc_i,\eta)^k/k$ for any $\gsc\in\allseries$ and $\xi_\gsc(\mbf{1})=0$.

%%%%%%%%%%%%%%%%%%%%%%%%%%%%%%%%%%%%%%%%%
%%%%%%%%%%%%%%%%%%%%%%%%%%%%%%%%%%%%%%%%%
%%%%%%%%%%%%%%%%%%%%%%%%%%%%%%%%%%%%%%%%%

\end{document}